\theoremstyle{plain}
\newtheorem{theorem}{Theorem}[section]
\newtheorem{algorithm}[theorem]{Algorithm}
\newtheorem{lemma}[theorem]{Lemma}
\newtheorem{corollary}[theorem]{Corollary}
\newtheorem{proposition}[theorem]{Proposition}
\theoremstyle{definition}
\newtheorem{example}[theorem]{Example}
\newtheorem{definition}[theorem]{Definition}
\theoremstyle{remark}
\newtheorem{remark}[theorem]{Remark}
\newcommand{\bigplus}{%
  \DOTSB\mathop{\mathpalette\mattos@bigplus\relax}\slimits@
}
\newcommand\mattos@bigplus[2]{%
  \vcenter{\hbox{%
    \sbox\z@{$#1\sum$}%
    \resizebox{!}{0.9\dimexpr\ht\z@+\dp\z@}{\raisebox{\depth}{$\m@th#1+$}}%
  }}%
  \vphantom{\sum}%
}
\renewcommand{\mod}{\textrm{ mod }}
\newcommand{\N}{\mathbb{N}}
\newcommand{\Z}{\mathbb{Z}}
\newcommand{\Q}{\mathbb{Q}}
\newcommand{\R}{\mathbb{R}}
\newcommand{\A}{\mathbb{A}}
\newcommand{\CC}{\mathbb{C}}
\newcommand{\om}{\omega}
\renewcommand{\P}{\mathbb{P}}
\newcommand{\F}{\mathbb{F}}
\newcommand{\al}{\alpha}
\renewcommand{\O}{\mathcal{O}}
\newcommand{\dd}{\mathrm{d}}
\newcommand{\ep}{\epsilon}
\newcommand{\T}{\mathbb{T}}
\begin{document}
\title{Computing models for quotients of modular curves}
\author{Josha Box}
\maketitle
\begin{abstract}
We describe an algorithm for computing a $\Q$-rational model for the quotient of a modular curve by an automorphism group, under mild assumptions on the curve and the automorphisms, by determining $q$-expansions for a basis of the corresponding space of cusp forms. We also give a moduli interpretation for general morphisms between modular curves.
\end{abstract}
\section{Introduction}
Consider a positive integer $N$ and a subgroup $G\subset \mathrm{GL}_2(\Z/N\Z)$. To the group $G$ we can associate the modular curve $X_G$, which parametrises pairs $(E,\phi)$ up to isomorphism, where $E$ is an elliptic curve and $\phi$ is a ``$G$-level structure'' on $E$ (see Definition \ref{modcurvedef}). We present in this paper an algorithm (Algorithm \ref{algorithm1}) for computing a model for $X_G/\Q$ in the case where $\mathrm{det}(G)=(\Z/N\Z)^{\times}$, $-I\in G$ and $G$ is normalised by $J:=\begin{pmatrix} 1&0\\0&-1\end{pmatrix}$. This algorithm determines $q$-expansions of a basis for the corresponding space of cusp forms, from which the equations can be deduced via Galbraith's techniques \cite{galbraith} when the genus is at least $ 2$. 

Moreover, we can explicitly describe (auto)morphisms of modular curves. For finite groups $\mathcal{A}$ of such automorphisms, we can also determine $X_G/\mathcal{A}$ directly, without computing $X_G$ first. These morphisms include, but, more importantly, are not limited to, Atkin--Lehner involutions. This opens the way for the explicit computation of trees of arbitrary modular curves and their quotients. We have applied this in Section \ref{examplesection} to find models for three level 35 modular curves, as well as the $j$-map on one of them; this has contributed in \cite{box2} to a proof that all elliptic curves over quartic fields not containing $\sqrt{5}$ are modular. 

 The main step to understanding general morphisms between modular curves it to describe their  moduli interpretation. We do this is Section \ref{sec2}, generalising a result of Bruin and Najman \cite[Section 3]{bruin} for $X_0(N)$. 


In Section \ref{sec3}, we develop the algorithm for computing $q$-expansions of a basis of cusp forms with respect to $G$, thus extending previous results dating back to Tingley \cite{tingley}, who in 1975 computed cusp forms on $\Gamma_0(N)$ for $N$ prime. Tingley's results were improved to all $N$ and optimised by Cremona \cite{cremona}, after which Stein \cite{stein} generalised this approach further to the spaces $S_k(\Gamma_0(N),\ep)$, where $\ep$ is a mod $N$ Dirichlet character. The same approach, using modular symbols, does not simply carry over to general congruence subgroups. In Section \ref{qexps}, we describe the scaling issue that occurs, which we solve in subsequent sections using twist operators, an idea due originally to John Cremona, c.f. \cite{annotatedsagecode}. \\

Despite the lack of a general algorithm, models for several more complicated modular curves have been found previously. We mention some of these, as well as their strong implications. Baran \cite{baran}  found models for the curves  $X_{\mathrm{ns}^+}(20)$ and $X_{\mathrm{ns}^+}(21)$, as well as for  the isomorphic curves $X_{\mathrm{ns}^+}(13)$ and $X_{s^+}(13)$ \cite{baran2}. The determination of the integral points of these curves gave new solutions to the class number one problem, while the rational points on the level 13 curves shed light onto Serre's uniformity problem over $\Q$ (see also \cite{serre}).

Derickx, Najman and Siksek \cite{derickx} used a planar model for  $X(\mathrm{b}5,\mathrm{ns}7)$ (defined in Section \ref{examplesection}), to prove that all elliptic curves over cubic fields are modular. This planar model was derived from Le Hung's equations \cite{lehung} for the curve as a fibred product $X_0(5)\times_{X(1)}X_{\mathrm{ns}^+}(7)$. Furthermore, Banwait and Cremona \cite{banwait} determined a model for the exceptional modular curve $X_{S_4}(13)$ by instead computing pseudo-eigenvalues of Atkin--Lehner operators. This allowed them to study the failure of the local-to-global principle for the existence of $\ell$-isogenies of elliptic curves over number fields. Simultaneously, Cremona and Banwait \cite{annotatedsagecode} found a model for the same curve $X_{S_4}(13)$, as well as Baran's curves $X_{\mathrm{ns}^+}(13)$ and $X_{s^+}(13)$ and equations describing the $j$-maps, using his method of modular symbols. This is not published, but available online as a \texttt{Sage} worksheet with annotations by Banwait and Cremona \cite{annotatedsagecode}. 

Given the desire for a more general algorithm for computing models of modular curves, it  may not come as a surprise that, during the author's work on this project, three independent results of similar nature were published -- at least in preprint. Brunault and Neururer \cite{brunault} used Eisenstein series to find an algorithm for computing the spaces of modular forms $M_k(\Gamma,\CC)$ of arbitrary weight and congruence subgroup $\Gamma\subset \mathrm{SL}_2(\Z)$. Zywina \cite{zywina}, on the other hand, generalised the work of Banwait and Cremona \cite{banwait}, using numerical approximation of pseudo--eigenvalues of Atkin--Lehner operators to determine $q$-expansions and models for modular curves. Finally, Assaf \cite{assaf} recently generalised the `classical' strategy of Cremona by defining and successfully utilising modular symbols and Hecke operators on general congruence subgroups to compute Fourier coefficients, at least at primes not dividing the level. Currently, as far as the author is aware, Assaf's algorithm is unable to determine the Fourier coefficients at primes dividing the level for congruence subgroups such as those in Section \ref{examplesection}, which may complicate provable determination of equations satisfied by those modular forms. Zywina can determine all Fourier coefficients, and his method can in fact be used to find a model for $X(\mathrm{b}5,\mathrm{e}7)$ (defined in Section \ref{examplesection}), but not currently for its quotients. 

Our approach instead generalises Cremona's work  \cite{annotatedsagecode} on $X_{S_4}(13)$. We can compute any Fourier coefficient for a basis of cusp forms for any congruence subgroup, without the need for numerical approximation. We have chosen this approach because it is a natural extension of the current methods for determining $q$-expansions of cusp forms on $\Gamma_0(N)$.  This enables us to use the current packages for cusp forms in \texttt{Sage}, making the algorithm relatively easy to implement. Another forte of our approach is that we can directly compute quotients of modular curves by automorphisms. As far as the author is aware, there is currently no other algorithm available that can compute models for the modular curves in Section \ref{examplesection}. 

The \texttt{Sage} and \texttt{Magma} code used for the computations in Section \ref{examplesection} is publicly available at
\[
\texttt{\href{https://github.com/joshabox/modularcurvemodels}{https://github.com/joshabox/modularcurvemodels} .}
\]
Given the existing comprehensive \texttt{Magma} implementations of Assaf \cite{assaf} and Zywina \cite{zywina}, we have not implemented a general version of our algorithm, although parts of our implementation do work more generally. We note that it should certainly be possible to implement the algorithm; in particular, the examples computed in Section \ref{examplesection} do not appear to be in any subcategory of ``easier cases''. The pragmatic reader in search of a model for their modular curve is advised to try Zywina's code first. 

\subsection{Acknowledgements}
When you are in Warwick -- or indeed anywhere in the world -- and need to do computations with modular forms, few would wonder who to ask for advice. The author is extremely grateful to John Cremona for multiple inspiring conversations, and for sharing his unpublished work on level 13 modular curves. In particular, Cremona's idea of using twist operators has been the cornerstone for this project. 

The author also thanks David Loeffler, Jeroen Sijsling and Samir Siksek for their kind advice, and John Cremona and Samir Siksek for valuable feedback on earlier versions of this article. 

We also thank the anonymous referees for their valuable comments.

\section{Morphisms between modular curves}
\label{sec2}
\subsection{Modular Curves not of the standard type}
In the literature, modular curves tend to be described as being determined by a level $N\in \Z_{>0}$ and a subgroup $G\subset \mathrm{GL}_2(\Z/N\Z)$. This is a convenient point of view, since such modular curves have an interpretation as  moduli spaces of elliptic curves with additional structure. 

However, some modular curves do not fit in this framework. The curve $X_0(N)$ -- associated to the group $B_0(N)\subset \mathrm{GL}_2(\Z/N\Z)$ of upper-triangular matrices -- parametrises pairs $(E,C)$ where $E$ is an elliptic curves and $C\subset E$ is a cyclic subgroup of order $N$. This curve admits a well-known involution, called the Atkin--Lehner involution $w_N$, mapping such a pair $(E,C)$ to $(E/C,E[N]/C)$. The quotient curve $X_0(N)/w_N$ does not parametrise elliptic curves with additional structure, but rather certain pairs of elliptic curves with extra structure, and therefore the standard theory of ``moduli problems'' does not apply.

Nonetheless, $X_0(N)/w_N$ does have a moduli interpretation, it is defined over $\Q$, and it is a modular curve in the adelic sense: $(X_0(N)/w_N)_{\CC}=\mathrm{GL}_2^+(\Q)\backslash (\mathrm{GL}_2(\A_f)\times \mathcal{H})/U$, where $\A_f$ denotes the finite ad\`eles, $\mathcal{H}$ is the complex upper half-plane and $U$ is the compact open subgroup of $\mathrm{GL}_2(\A_f)$ generated by $w_N$ and the inverse image of $G$ in $\mathrm{GL}_2(\widehat{\Z})$. 

While Atkin--Lehner involutions may be well understood, more modular curves can arise in this way. Firstly, when $h^2\mid N$ for a non-trivial divisor $h$ of 24, the normaliser of $\Gamma_0(N)$ in $\mathrm{PGL}_2(\Q)$ is generated by more than just the Atkin--Lehner involutions (see Lemma \ref{normaliserlemma}), giving rise to extra automorphisms on $X_0(N)$ (not all defined over $\Q$, however). When $N\in \{40,48\}$, two such automorphisms were explicitly determined by Bruin and Najman \cite{bruin}. When $9\mid N$, one normalising matrix is $\begin{pmatrix} 1 & 1/3 \\ 0 & 1\end{pmatrix}$, giving rise to an automorphism $\al_3$ of order 3, defined over $\Q(\zeta_3)$, such that the group $\langle\al_3\rangle$ generated by $\al_3$ is $\Q$-rational. In particular, this yields a ``new'' morphism of curves over $\Q$, $X_0(N)\to X_0(N)/\langle \al_3\rangle$. 

More types of examples occur on modular curves of mixed level by composing automorphisms. Denote by $G(\mathrm{s}3^+)$ and $G(\mathrm{ns}3^+)$ the normalisers in $\mathrm{GL}_2(\F_3)$ of split and non-split Cartan subgroups respectively. Then $G(\mathrm{s}3^+)\subset G(\mathrm{ns}3^+)$ with index 2. Any matrix in $G(\mathrm{ns}3^+)\setminus  G(\mathrm{s}3^+)$ determines an involution $\phi_3$ on $X_{G(\mathrm{s}3^+)}$. On the level 15 modular curve $X(\mathrm{b}5,\mathrm{s}3^+)$, determined by the intersection of the inverse images of $B_0(5)$ and $G(\mathrm{s}3^+)$ in $\mathrm{GL}_2(\Z/15\Z)$, we then obtain an Atkin--Lehner involution $w_5$ as well as a lift $\psi_3$ of $\phi_3$. These involutions commute and give rise to another involution $\psi_3w_5$, and another modular curve $X(\mathrm{b}5,\mathrm{s}3^+)/\psi_3w_5$.  
In Section \ref{examplesection} we study a similar example, which the author stumbled upon ``in nature'' (see \cite{box2}).  In order to understand such quotient curves, we first study the moduli interpretation of the automorphisms determined by such matrices. 

\subsection{Modular curves and their moduli interpretation} We use their moduli interpretation to define modular curves over more general base schemes. While we shall not need the description of modular curves as schemes over $\Z[1/N]$ or $\Z[1/N,\zeta_N]$ as defined below, this approach does help us decide the \emph{field} over which modular curves and the Fourier coefficients of their cusp forms are defined. It moreover allows us to prove which morphisms are defined over this field. We give an overview of standard results from Deligne and Rapoport \cite{deligne} and Katz and Mazur \cite{katzmazur}, which we attempt to describe as concretely as possible. 

Let $N\in \Z_{\geq 1}$ be an integer, and choose a primitive $N$th root of unity $\zeta_N:=e^{2\pi i/N}\in \CC$. To define modular curves via their moduli interpretation, we need to consider arbitrary base schemes. Let $S$ be a scheme over $\Z[1/N]$. An \emph{elliptic curve over $S$} is a pair $(E\to S,O)$, where $E\to S$ is a proper smooth map, all of whose fibres are geometrically connected curves of genus 1, and $O$ is a section of $E\to S$. Then $E/S$ obtains the structure of a commutative group scheme. On $E/S$, there is the Weil pairing
\[
e_N:\;\;E[N](S)\times E[N](S) \to \mathbf{\mu}_N(S),\;\; (P,Q)\mapsto e_N(P,Q),
\]
where $\mu_N=\mathrm{Spec}(\Z[X]/(X^N-1))$ is the multiplicative group scheme of $N$th roots of unity. 
To such an elliptic curve $E/S$, we can associate its \emph{$\Gamma(N)$-structures}, defined as the maps
\[
\phi:\; (\Z/N\Z)_S^2\to E[N](S),
\]
such that $E[N]=\sum_{(a,b)\in \Z/N\Z^2}\phi(a,b)$ as effective Cartier divisors. (When $S=\mathrm{Spec}(K)$ for a field $K$ of characteristic coprime to $N$, this means that $\phi(0,1)$ and $\phi(1,0)$ form a basis.) Now suppose that $g\in \mathrm{GL}_2(\Z/N\Z)$. Then $g$ acts on $(\Z/N\Z)_S^2$ by right-multiplication of row vectors, and this is compatible with the Weil pairing in the sense that
\begin{align}
\label{weilpairingdet}
e_N(\phi(a,b),\phi(c,d)) =e_N(\phi(1,0),\phi(0,1))^{\mathrm{deg}(g)} \text{ for each } g=\begin{pmatrix} a & b \\ c & d\end{pmatrix} \in \mathrm{GL}_2(\Z/N\Z).
\end{align}
We consider the functor
\[
\mathcal{F}_N:\;\mathrm{\underline{Sch}}_{\Z[\zeta_N]}\to \mathrm{\underline{Set}},\;\; S\mapsto \{(E/S,\phi)\},
\]
 mapping a scheme $S$ to the isomorphism classes of pairs $(E/S,\phi)$, where $E$ is an elliptic curve over $S$ and $\phi$ is a $\Gamma(N)$-structure on $E/S$. The Weil pairing defines a map of functors $e_N:\; \mathcal{F}_N\to \mu_N$, and we define the subfunctor $
 \mathcal{F}_N^{\mathrm{can}}:\;\mathrm{\underline{Sch}}_{\Z[\zeta_N]}\to \mathrm{\underline{Set}}
 $, mapping $S$ to the set of pairs $(E/S,\phi)\in \mathcal{F}_N(S)$ such that $e_N(\phi(1,0),\phi(0,1))=\zeta_N$. This rigidifies the moduli problem. Now $\mathcal{F}_N^{\mathrm{can}}$ admits a coarse moduli space $Y(N)/\Z[\zeta_N]$, whose compactification $X(N)$ is smooth over $\Z[\zeta_N,1/N]$, as shown e.g. in \cite[Chapter 9]{katzmazur}.
 
 We now consider any subgroup $G\subset \mathrm{GL}_2(\Z/N\Z)$. Its group of determinants $\mathrm{det}(G)$ acts on $\Z[\zeta_N]$ by automorphisms via $\zeta_N\mapsto \zeta_N^a$ for $a\in \mathrm{det}(G)$. We obtain a fixed subring $\Z[\zeta_N]^{\mathrm{det}(G)}\subset \Z[\zeta_N]$. For schemes $S/\Z[\zeta_N]$, the right-action of $ G$ on $(\Z/N\Z)_S^2$ by right-multiplication gives rise to a left-action on $\Gamma(N)$-structures. For $g\in G$ and a $\Gamma(N)$-structure $\phi$, we denote this by $g\cdot \phi$, so that $(g\cdot \phi)(a)=\phi(a\cdot g)$.   Denote the $G$-equivalence class of the $\Gamma(N)$-structure $\phi$ by $[\phi]_G$. 
 
 Given a $\Z[\zeta_N]^{\mathrm{det}(G)}$-scheme $S$ and an elliptic curve $E/S$, we can consider schemes $T/S$ and their base-change $T':=T\times_{\Z[\zeta_N]^{\mathrm{det}(G)}}\Z[\zeta_N]$.
 \begin{definition}\label{modcurvedef}We define the functor
 \[
 \mathcal{F}_G:\;\mathrm{\underline{Sch}}_{\Z[\zeta_N]^{\mathrm{det}(G)}}\to \mathrm{\underline{Set}},\;\; S\mapsto \{(E/S,[\phi]_G\},
 \]
 mapping a scheme $S$ to the set of isomorphism classes of pairs $(E/S,[\phi]_G)$, where $[\phi]_G$ is a $G$-equivalence class of $\Gamma(N)$-structures on $E_{T'}/T'$ for some $T/S$, such that $[\phi]_G$ is ``defined over $S$''. We define $\mathcal{F}_G^{\mathrm{can}}$ as the subfunctor of those pairs $(E/S,[\phi]_G)$ where $e_N(\phi(1,0),\phi(0,1))$ and $\zeta_N$ have the same image in $\mu_N/\mathrm{det}(G)(S)$, or, more concretely, where 
 \[
 e_N(\phi(1,0),\phi(0,1))=\zeta_N^a \text{ for some } a\in \mathrm{det}(G).
 \]
As shown in \cite[Chapter 9]{katzmazur}, $\mathcal{F}_G^{\mathrm{can}}$ admits a coarse moduli scheme $Y_G/\Z[1/N,\zeta_N]^{\mathrm{det}(G)}$, whose compactification $X_G$ is smooth. We call $X_G$ the \emph{modular curve associated to $G$}.
\end{definition}
Finally, we mention what it means for $[\phi]_G$ to be ``defined over $S$''. Given an elliptic curve $E/S$, we consider the  functor
\[
(\mathcal{F}_N)_{E/S}:\;\; \underline{\mathrm{Sch}}_{S}\longrightarrow \underline{\mathrm{Set}},\;\; T\mapsto \{\Gamma(N)\text{-structures on } E_T/T\}.
\]
This functor is represented by an $S$-scheme $\mathcal{M}_{E/S}$, meaning that we have bijections $\mathcal{M}_{E/S}(T)\longleftrightarrow (\mathcal{F}_N)_{E/S}(T)$, functorially in $T$. Now $G$ acts on $\mathcal{M}_{E/S}$, and we say that $[\phi]_G$ is \emph{defined over $S$}, when the image of $\phi$ in the $T'$-points $(\mathcal{M}_{E/S}/G)(T')$ of the quotient scheme is in fact in $(\mathcal{M}_{E/S}/G)(S)$. 

\subsection{Notation for modular curves}
We define the \emph{congruence subgroup} $\Gamma_G$ associated to $G\subset \mathrm{GL}_2(\Z/N\Z)$ to be the inverse image under $\mathrm{SL}_2(\Z)\to \mathrm{SL}_2(\Z/N\Z)$ of $G\cap \mathrm{SL}_2(\Z/N\Z)$. For any ring $R$, we denote by $\mathrm{P}\Gamma$ the image of $\Gamma\subset \mathrm{GL}_2(R)$ in $\mathrm{PGL}_2(R)$. Recall that $(Y_G)_{\CC}\simeq \Gamma_G\backslash\mathcal{H}$, where $\mathcal{H}$ is the upper half-plane and $\Gamma_G$ acts by fractional linear transformations. When $N=K\cdot M$, we denote by $G_K$ the image of $G$ in $\mathrm{GL}_2(\Z/K\Z)$. 

By $\mathcal{N}_{\Gamma_G}$ we denote the normaliser of $\mathrm{P}\Gamma_G$ in $\mathrm{PGL}_2^+(\Q)$, where the superscript $+$ means ``with positive determinant'', and by $\mathcal{N}_G\subset \mathcal{N}_{\Gamma_G}$ the subgroup of those $\gamma\in \mathcal{N}_{\Gamma_G}$ satisfying condition (\ref{morphismcondition}), to be defined in Proposition \ref{normalisermoduliprop}. 
We define the following subgroups of $\mathrm{GL}_2(\Z/N\Z)$:
\[
B_0(N):=\begin{pmatrix} * & * \\ 0 & *\end{pmatrix},\;\;B_1(N):=\begin{pmatrix} * & * \\ 0 & 1 \end{pmatrix},\;\; \widetilde{B}_1(N):=\begin{pmatrix} 1 & * \\ 0 & 1 \end{pmatrix},\;\; G(N):=\begin{pmatrix} *&0 \\ 0 & 1 \end{pmatrix},\;\; \widetilde{G}(N):=\left\{I\right\}.
\]
 We denote their congruence subgroups by $\Gamma_0(N)$, $\Gamma_1(N)$, $\widetilde{\Gamma}_1(N)$, $\Gamma(N)$ and $\widetilde{\Gamma}(N)$ respectively. Note that $\widetilde{\Gamma}_1(N)=\Gamma_1(N)$ and $\widetilde{\Gamma}(N)=\Gamma(N)$. The corresponding modular curves are denoted by $X_0(N)$, $X_1(N)$, $\widetilde{X}_1(N)$, $X(N)$ and $\widetilde{X}(N)$. 

We also write $X(\Gamma_G)$ instead of $X_G$ and $\widetilde{X}(\Gamma_G)$ instead of $X_{\widetilde{G}}$ when $G$ is clear from context. For positive integers $K,M$, we define $X(\Gamma_0(M)\cap \Gamma_1(K))$, resp. $\widetilde{X}(\Gamma_0(M)\cap \Gamma_1(K))$, for the curve associated to the intersection of the inverse images of $B_0(M)$ and $B_1(K)$, resp. $B_0(M)$ and $\widetilde{B}_1(K)$, in $\mathrm{GL}_2(\Z/\mathrm{lcm}(K,M)\Z)$. Similarly, define $X(\Gamma_0(M)\cap \Gamma(K))$ and $\widetilde{X}(\Gamma_0(M)\cap \Gamma(K))$. 

The curves $\widetilde{X}(\Gamma_0(M)\cap \Gamma(K))$ and $\widetilde{X}(\Gamma_0(M)\cap \Gamma_1(K))$ will be the protagonists of our story.

\subsection{Morphisms between modular curves and their moduli interpretation}
We first mention two kinds of trivial morphisms.
\begin{itemize}
    \item[(M1)]When $G\subset H\subset \mathrm{GL}_2(\Z/N\Z)$, denote by $\mathcal{F}_H^{\mathrm{can}}|_{\mathrm{\underline{Sch}}_{\Z[\zeta_N]^{\mathrm{det}(G)}}}$ the restriction of $\mathcal{F}_H^{\mathrm{can}}$ to $\underline{\mathrm{Sch}}_{\Z[\zeta_N]^{\mathrm{det}(G)}}$. We obtain a forgetful map of functors $\mathcal{F}_G^{\mathrm{can}}\to \mathcal{F}_H^{\mathrm{can}}|_{\mathrm{\underline{Sch}}_{\Z[\zeta_N]^{\mathrm{det}(G)}}}$ yielding a quotient morphism $X_G\to X_H\times_{\Z[\zeta_N]^{\mathrm{det}(H)}}\Z[\zeta_N]^{\mathrm{det}(G)}$. 
\item[(M2)]Consider any group $G\subset \mathrm{GL}_2(\Z/N\Z)$ and an integer $M$. Define $\pi: \; \mathrm{GL}_2(\Z/MN\Z)\to \mathrm{GL}_2(\Z/N\Z)$. The multiplication-by-$M$ map on $NM$-torsion of elliptic curves and the to-the-$M$th-power map $\mu_{NM}\to \mu_N$ commute with respect to the Weil pairing by (\ref{weilpairingdet}), and define an isomorphism of functors $\mathcal{F}_{\pi^{-1}(G)}^{\mathrm{can}}\to \mathcal{F}_G^{\mathrm{can}}$. We conclude that $X_{\pi^{-1}(G)}=X_G$. 
\end{itemize}
By (M2), any morphism of modular curves can be viewed as a morphism between modular curves of the same level. 
\begin{example}
\label{tildeisoex}
When $G\subset \mathrm{GL}_2(\Z/N\Z)$ and $\Delta\subset \mathrm{det}(G)$ is a subgroup, we can consider the subgroup $H\subset G$ of elements $g\in G$ with $\mathrm{det}(g)\in \Delta$. Then $G$ and $H$ give rise to the same congruence subgroups, hence $(X_G)_{\CC}=(X_H)_{\CC}$. We obtain a morphism of curves $(X_H)_{\Q(\zeta_N)^{\mathrm{det}(H)}}\to (X_G)_{\Q(\zeta_N)^{\mathrm{det}(H)}}$ of the form (M1), which must be an isomorphism.  For example, $\widetilde{X}(\Gamma_0(M)\cap \Gamma_1(K))_{\Q(\zeta_K)}=X(\Gamma_0(M)\cap \Gamma_1(K))_{\Q(\zeta_K)}$ and $\widetilde{X}(\Gamma_0(M)\cap \Gamma(K))_{\Q(\zeta_K)}=X(\Gamma_0(M)\cap \Gamma(K))_{\Q(\zeta_K)}$. 
\end{example}
Suppose that $\gamma\in \mathrm{GL}_2^+(\Q)$ satisfies $\gamma\Gamma_G\gamma^{-1}\subset\Gamma_H$ for some $G,H\subset \mathrm{GL}_2(\Z/N\Z)$. Then $\gamma$ defines a morphism $(X_G)_{\CC}\to (X_{H})_{\CC}$ through its action as a fractional linear transformation on $\mathcal{H}$. This is a morphism defined a priori over $\CC$. We investigate when this morphism is in fact defined over $\Z[\zeta_N]^{\mathrm{det}(G)}$. The following proposition generalises, and was inspired by, \cite[Section 3]{bruin}.

\begin{proposition}
\label{normalisermoduliprop}
Suppose that $G,H\subset \mathrm{GL}_2(\Z/N\Z)$ and consider $\gamma\in \mathrm{GL}_2^+(\Q)$. Assume (after scaling) that $\gamma$ has integral coefficients, with $\delta\colonequals \mathrm{det}(\gamma)$. Denote by $\pi: \; \mathrm{GL}_2(\Z/N\delta \Z)\to \mathrm{GL}_2(\Z/N\Z)$ the reduction map. If 
\begin{align}
\label{morphismcondition}
\gamma\pi^{-1}(G) \subset \pi^{-1}(H)\gamma
\end{align}
then $\mathrm{det}(G)\subset \mathrm{det}(H)$ and
 $\gamma$ determines a morphism $\theta_{\gamma}: X_G\to X_H\times_{\Z[\zeta_N]^{\mathrm{det}(H)}} \Z[\zeta_N]^{\mathrm{det}(G)}$.

Moreover, $\gamma\Gamma_G\gamma^{-1} \subset \Gamma_H$ and $(\theta_{\gamma})_{\CC}:\; (X_G)_{\CC}\to (X_H)_{\CC}$ corresponds to the action of $\gamma$ on $\Gamma_G\backslash \mathcal{H}$ followed by projection onto $\Gamma_H\backslash \mathcal{H}$ .
\end{proposition}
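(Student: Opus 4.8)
The plan is to build the morphism $\theta_\gamma$ as a natural transformation of moduli functors and then read every assertion off from that construction. First I would invoke (M2) to replace $G,H$ by their preimages $\pi^{-1}(G),\pi^{-1}(H)\subset\mathrm{GL}_2(\Z/N\delta\Z)$, so that $X_G=X_{\pi^{-1}(G)}$ and $X_H=X_{\pi^{-1}(H)}$ are described by $\Gamma(N\delta)$-structures and the reduction $\bar\gamma$ of $\gamma$ acts on $(\Z/N\delta\Z)^2$. Writing $M=N\delta$ and $\gamma^{\ast}=\delta\gamma^{-1}\in M_2(\Z)$ for the adjugate, condition (\ref{morphismcondition}) then reads: for every $g\in\pi^{-1}(G)$ there is $h\in\pi^{-1}(H)$ with $\bar\gamma g=h\bar\gamma$ in $M_2(\Z/M\Z)$. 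The gain of passing to level $M$ is that the degree-$\delta$ isogeny hidden in $\gamma$ now acts on the full torsion that the level structure sees.

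Next I would define the transformation. Given $(E/S,\phi)$ with $\phi$ a $\Gamma(M)$-structure, set $W_\gamma:=(\Z/M\Z)^2[\delta]\,\bar\gamma$, the order-$\delta$ image of the $\delta$-torsion under right multiplication by $\bar\gamma$, and let $C:=\phi(W_\gamma)\subset E[\delta]$. Put $E':=E/C$, with quotient isogeny $\hat u\colon E\to E'$ and complementary isogeny $u\colon E'\to E$ satisfying $u\hat u=[\delta]$, and let $\phi'$ be the $\Gamma(M)$-structure on $E'$ characterised by
\[
\hat u\circ\phi=\phi'\circ(\,\cdot\,\bar\gamma^{\ast}),\qquad u\circ\phi'=\phi\circ(\,\cdot\,\bar\gamma).
\]
The candidate is $(E,\phi)\mapsto(E',\phi')$. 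To pin down conventions and to see that $\phi'$ really is a $\Gamma(M)$-structure I would check everything over $\CC$: for $E=\CC/(\Z\tau+\Z)$ with its standard structure one computes directly that $C$ is the kernel of $\hat u$, that $E'\cong\CC/(\Z(\gamma\tau)+\Z)$ with its standard structure, and that the two displayed relations hold; hence the transformation realises $\tau\mapsto\gamma\tau$. Over an arbitrary base the existence and uniqueness of $\phi'$ then follow from the representability and rigidity of $\Gamma(M)$-structures in \cite[Chapter 9]{katzmazur}, exactly as in \cite[Section 3]{bruin}.

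The heart of the matter, and the step I expect to require the most care, is that this transformation descends to $G$- and $H$-equivalence classes; this is precisely where (\ref{morphismcondition}) is used. Replacing $\phi$ by $g\cdot\phi$ (so $(g\cdot\phi)(v)=\phi(vg)$) changes $C$ to $\phi(W_\gamma g)$. Now $W_\gamma g=(\Z/M\Z)^2[\delta]\,\bar\gamma g=(\Z/M\Z)^2[\delta]\,h\bar\gamma=(\Z/M\Z)^2[\delta]\,\bar\gamma=W_\gamma$, using $\bar\gamma g=h\bar\gamma$ and that the characteristic subgroup $(\Z/M\Z)^2[\delta]$ is stable under the invertible $h$; so $C$, $E'$ and the isogenies are unchanged. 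Feeding $g\cdot\phi$ through $u\circ(g\cdot\phi)'=\phi\circ(\,\cdot\,\bar\gamma g)=\phi\circ(\,\cdot\,h\bar\gamma)=u\circ(h\cdot\phi')$ shows that $(g\cdot\phi)'$ and $h\cdot\phi'$ agree after composing with $u$, hence differ by a homomorphism $(\Z/M\Z)^2\to\ker u=C\subseteq E'[\delta]$. Such a homomorphism has the form $v\mapsto\phi'(v\,NX)$ for some $X\in M_2(\Z/M\Z)$, and then $(g\cdot\phi)'=(h+NX)\cdot\phi'$ with $h+NX\in\pi^{-1}(H)$, since $h+NX\equiv h\pmod N$ and is still invertible. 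Thus $[(g\cdot\phi)']_H=[\phi']_H$, and the delicate $\delta$-torsion ambiguity is absorbed exactly because the condition is imposed at level $N\delta$ with full preimages.

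It remains to extract the numerical and arithmetic statements. Taking determinants in $\bar\gamma g=h\bar\gamma$ gives $\delta\det(g)=\delta\det(h)$ in $\Z/M\Z$, hence $\det(g)\equiv\det(h)\pmod N$; letting $g$ range over $\pi^{-1}(G)$ yields $\det(G)\subset\det(H)$, so $\Z[\zeta_N]^{\det(H)}\subset\Z[\zeta_N]^{\det(G)}$ and the fibre product in the statement is defined. Tracking the Weil pairing through $\hat u\circ\phi=\phi'\circ(\,\cdot\,\bar\gamma^{\ast})$ via (\ref{weilpairingdet}) and $e_M(\hat u x,\hat u y)=e_M(x,y)^{\delta}$ (with $\det\gamma^{\ast}=\delta$) shows that $e_N(\phi'(1,0),\phi'(0,1))$ differs from $e_N(\phi(1,0),\phi(0,1))$ by a factor in $\det(H)$, so the transformation sends $\mathcal{F}_G^{\mathrm{can}}$ into $\mathcal{F}_H^{\mathrm{can}}$ after base change to $\Z[\zeta_N]^{\det(G)}$. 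As $\gamma$, $W_\gamma$ and the entire construction are defined over $\Z$ and commute with base change, the natural transformation is defined over $\Z[\zeta_N]^{\det(G)}$ and induces a morphism of coarse moduli schemes, extending to the compactifications, which is $\theta_\gamma\colon X_G\to X_H\times_{\Z[\zeta_N]^{\det(H)}}\Z[\zeta_N]^{\det(G)}$. Finally, base-changing to $\CC$ and using the computation above, $(\theta_\gamma)_{\CC}$ is induced by $\tau\mapsto\gamma\tau$ on $\Gamma_G\backslash\mathcal{H}\to\Gamma_H\backslash\mathcal{H}$; since this map is well defined, $\gamma\Gamma_G\gamma^{-1}\subset\Gamma_H$, which is the \enquote{moreover} clause.
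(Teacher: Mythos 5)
Your construction is, at its core, the same as the paper's: your $C=\phi(W_\gamma)$ is exactly the paper's $C_\gamma=N\phi(\mathrm{Im}_{N\delta}(\gamma))$ (the $\delta$-torsion of $(\Z/N\delta\Z)^2$ is $N(\Z/N\delta\Z)^2$), your $E'=E/C$ is the paper's $E_\gamma$, and the hypothesis (\ref{morphismcondition}) enters in the same two places (invariance of $C$ under replacing $\phi$ by $g\cdot\phi$, and equivariance of the transported level structure). Two of your deviations are fine or better: your determinant argument (cancel $\delta$ in $\delta\det(g)\equiv\delta\det(h)\bmod N\delta$) is slicker than the paper's, which instead deduces $\det(g)\equiv\det(h)\bmod N$ from the invertibility of $\overline{\gamma}$ as a map onto the subquotient $\mathrm{Im}_{N\delta}(\gamma)/N\,\mathrm{Im}_{N\delta}(\gamma)$; and your transport of a full $\Gamma(N\delta)$-structure $\phi'$, with the $\pi^{-1}(H)$-orbit shown independent of choices via the $h+NX$ computation, is compatible with the paper's map --- the paper's canonical level-$N$ structure $\phi_\gamma$ of (\ref{torsionmap}) is precisely the level-$N$ part $v\mapsto\phi'(\delta v)$ of your $\phi'$. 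Be aware, though, that the paper's $\phi_\gamma$ is given by an explicit formula in $\phi$ and hence exists over the same base as $\phi$, whereas your $\phi'$ in general exists only after passing to a cover $T\to S$: the values of $\phi'$ not forced by $\hat u\circ\phi=\phi'\circ(\,\cdot\,\overline{\gamma}^{\ast})$ lie in fibres of $u$, which are torsors under $\ker u$ and need not have rational points. Your orbit-independence computation is exactly what is needed to descend $[\phi']_{\pi^{-1}(H)}$ to $S$ in the sense of Definition \ref{modcurvedef}, but this should be said explicitly rather than deferred wholesale to rigidity; the paper's choice-free formula avoids the issue altogether.

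The one genuine gap is in the ``moreover'' clause. You deduce $\gamma\Gamma_G\gamma^{-1}\subset\Gamma_H$ from the well-definedness of $\Gamma_G\tau\mapsto\Gamma_H\gamma(\tau)$. Even granting the properness/Baire argument you leave implicit --- that an element of $\mathrm{PSL}_2(\R)$ moving every $\tau$ within its $\mathrm{P}\Gamma_H$-orbit must lie in $\mathrm{P}\Gamma_H$ --- this only identifies $\gamma s\gamma^{-1}$ with an element of $\Gamma_H$ up to sign, i.e.\ it gives $\gamma\Gamma_G\gamma^{-1}\subset\pm\Gamma_H$, which is strictly weaker than the stated containment whenever $-I\notin\Gamma_H$ (e.g.\ when $\Gamma_H=\Gamma(N)$ with $N>2$). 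The paper proves the exact statement by a direct matrix computation whose ingredients you already have: for $s\in\Gamma_G$, condition (\ref{morphismcondition}) yields $s'$ with $s'\gamma\equiv\gamma s\bmod N\delta$; then $s'\gamma-\gamma s$ has all entries divisible by $\delta=\det(\gamma)$, so $\gamma s\gamma^{-1}\in\mathrm{SL}_2(\Z)$, and $u=\gamma s\gamma^{-1}(s')^{-1}$ reduces to the identity mod $N$, whence $\gamma s\gamma^{-1}=us'\in\Gamma_H$ with the correct sign. Patch that step and your proof is complete.
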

\begin{proof}
We shall replace $G$ by $\pi^{-1}(G)$ and construct a morphism $\mathcal{F}^{\mathrm{can}}_{\pi^{-1}(G)}\to \mathcal{F}^{\mathrm{can}}_H|_{\Z[\zeta_N]^{\mathrm{det}(G)}}$. We first unravel formalities to make concrete what this means. The functor $\mathcal{F}^{\mathrm{can}}_G$ is the functor associated to the ``moduli problem'' $([\Gamma(N)]/G)^{\mathrm{can}}$, as defined in \cite[9.4]{katzmazur}. There are two constructions at work here: the quotient by $G$ (see \cite[Chapter 7]{katzmazur}), and the $\mathrm{can}$-construction (see \cite[Chapter 9]{katzmazur}). It follows from (M2), \cite[Corollary 9.1.9]{katzmazur} and \cite[Theorem 7.1.3]{katzmazur} that it suffices to define a $(\pi^{-1}G,H)$-equivariant map $\al\colon \mathcal{F}_{N\delta}\to \mathcal{F}_N$ given by
\[
\al:\; (E/S,\phi: (\Z/N\delta\Z)_S\to E[N\delta])\mapsto(E_{\gamma}/S,\phi_{\gamma}: (\Z/N\Z)_S\to E_{\gamma}[N])
\]
that gives rise to a commutative diagram
\[
\begin{tikzcd}
\mathcal{F}_{N\delta} \arrow[d, "e_{N\delta}"'] \arrow[rr, "\al"'] &  & \mathcal{F}_N \arrow[d, "e_N"] \\
\mu_{N\delta} \arrow[rr]                                           &  & \mu_{N}                       
\end{tikzcd}
\]
 where the bottom map is defined by $\zeta_{N\delta}\mapsto \zeta_N=\zeta_{N\delta}^{\delta}$. By $(\pi^{-1}G,H)$-equivariant we mean that  $E_{\gamma}/S$ depends only on $[\phi]_{\pi^{-1}(G)}$, and moreover for each $(E,\phi)\in \mathcal{F}_{N\delta}$ and each $g\in \pi^{-1}(G)$ there exists $h\in H$ such that $(g\cdot \phi)_{\gamma}=h\cdot \phi_{\gamma}$ and $e_N(\al(E/S,\phi))^{\mathrm{det}(h)}=e_{N\delta}(E/S,\phi)^{\delta\mathrm{det}(g)}$.

 So we consider $E/S$ and a $\Gamma(N\delta)$-structure $\phi:\; (\Z/N\delta \Z)_S^2\to E[N\delta]$. 
 %
 For $M\in \Z_{\geq 1}$, denote by $\mathrm{Im}_M(\gamma)$ the image of the action of $\gamma$ on $(\Z/M\Z)_S^2$ by right-multiplication. We then define $C_{\gamma}\colonequals N\phi(\mathrm{Im}_{N\delta}(\gamma))$. Due to (\ref{morphismcondition}), the subgroup $C_{\gamma}$ depends only on $[\phi]_{\pi^{-1}(G)}$. The size of $C_{\gamma}(S)$ is $\delta$. We define $E_{\gamma}\colonequals E/C_{\gamma}$, and note that $E_{\gamma}[N]=[N]^{-1}C_{\gamma}/C_{\gamma}$, where $[N]$ denotes multiplication-by-$N$ on $E$.  Next, we define the map
\begin{equation}
\label{torsionmap}
\begin{tikzcd}
\phi_{\gamma}:\;(\Z/N\Z)_S^2 \arrow[rr, "\overline{\gamma}"] &  & \mathrm{Im}_{N\delta}(\gamma)/N\mathrm{Im}_{N\delta}(\gamma) \arrow[rr, "\overline{\phi}"] &  & {[N]^{-1}C_{\gamma}/C_{\gamma}=E_{\gamma}[N]}.
\end{tikzcd}
\end{equation}
Suppose $\gamma=\begin{pmatrix} a & b \\ c & d\end{pmatrix}$. The map denoted by $\overline{\gamma}$ is given by $(1,0)\mapsto (a,b)$ and $(0,1)\mapsto (c,d)$, while the map denoted by $\overline{\phi}$ is determined by $\phi$. 

Now consider $g\in \pi^{-1}(G)$. By (\ref{morphismcondition}), we find $h\in \pi^{-1}(H)$ such that $ h\gamma =\gamma g$. For a matrix $\eta$, denote by $m_{\eta}$ the right-multiplication map by $\eta$. Then 
\[
\overline{\phi}\circ m_g \circ m_{\overline{\gamma}} = \overline{\phi}\circ m_{\overline{\gamma}g}=\overline{\phi}\circ m_{\overline{\gamma}}\circ m_{\pi(h)} 
\]
so that indeed $(g\cdot \phi)_{\gamma}=\pi(h)\cdot\phi_{\gamma}$, as desired.  The standard properties of the Weil pairing under isogenies show that
\[
e_N(\phi_{\gamma}(1,0),\phi_{\gamma}(0,1))=e_{N\delta}(\phi(1,0),\phi(0,1))^{\delta},
\]
leaving us to show only that $\mathrm{det}(g)\equiv\mathrm{det}(h) \mod N$. For this, we note that from the matrix $\gamma$ of determinant $\delta$, we have constructed an \emph{invertible} linear map $\overline{\gamma}:\; (\Z/N\Z)^2_S\to \mathrm{Im}_{N\delta}(\gamma)/N\mathrm{Im}_{N\delta}(\gamma)$, and $m_{\overline{\gamma}}\circ m_{\pi(h)} = m_{g}\circ m_{\overline{\gamma}}$ thus implies $\mathrm{det}(h)\equiv \mathrm{det}(g)\mod N$. It follows that $\gamma$ indeed determines a morphism of functors $\mathcal{F}_{\pi^{-1}(G)}^{\mathrm{can}}\to \mathcal{F}_H^{\mathrm{can}}|_{\Z[\zeta_N]^{\mathrm{det}(G)}}$, and thus of their compactified coarse moduli spaces. 

Next, we consider $S=\mathrm{Spec}(\CC)$. We start by showing that $\gamma\Gamma_G\gamma^{-1}\subset \Gamma_H$. By (\ref{morphismcondition}), for each $s\in \Gamma_G$ there exists $s'\in \Gamma_H$ such that $s'\gamma\equiv \gamma s \mod N\delta$. Then $s'\gamma-\gamma s$ has all entries divisible by $\mathrm{det}(\gamma)$, and consequently $\gamma s \gamma^{-1}\in \mathrm{SL}_2(\Z)$.  Hence  $u=\gamma s\gamma^{-1}(s')^{-1}$ is in $\mathrm{SL}_2(\Z)$ and reduces to the identity mod $N$, so $u\in \Gamma_H$. We conclude that $\gamma s \gamma^{-1} = us' \in \Gamma_H$, as desired. 

 Recall that we have a bijection $\Gamma_G\backslash \mathcal{H}\to \mathcal{F}_G(\mathrm{Spec}(\CC))$ given by 
\[
\Gamma_G\tau\mapsto (E_{\tau},[\phi_{\tau}]_G),\;\; E_{\tau}=\frac{\CC}{\tau\Z\oplus  \Z},\;\; \phi_{\tau}(1,0)=\frac{\tau}{N},\; \phi_{\tau}(0,1)=\frac{1}{N},
\]
and similarly for $H$.
We check that the action of $\gamma$ just defined corresponds under this bijection to the action of $\gamma$ on $\mathcal{H}$ as a fractional linear transformation. So we consider $\tau\in \mathcal{H}$ and $(E_{\tau},[\phi_{\tau}]_G)\in \mathcal{F}_G(\mathrm{Spec}(\CC))$. First, we note that
\[
C_{\gamma}=\left\langle \frac{a\tau}{\delta}+\frac{b}{\delta},\frac{c\tau}{\delta}+\frac{d}{\delta}\right\rangle.
\]
As $\mathrm{det}(\gamma)=\delta$, we have $\tau\Z\oplus  \Z \subset \frac{1}\delta((a\tau+b)\Z\oplus (c\tau+d)\Z)$, and we obtain an isomorphism
\begin{align}
\label{ciso}
\beta:\; E_{\tau}/C_{\gamma}=\CC/\left(\left(\frac{a\tau}{\delta}+\frac{b}{\delta}\right)\Z\oplus\left(\frac{c\tau}{\delta}+\frac{d}{\delta}\right)\Z \right)\longrightarrow E_{\gamma(\tau)},
\end{align}
defined by $z\mapsto \delta\cdot z /(c\tau+d)$. Finally,  $(1,0)$ transforms under $\beta\circ (\phi_{\tau})_{\gamma}$ as follows:
\[
\beta\circ (\phi_{\tau})_{\gamma}:\; (1,0)\mapsto (a,b)\mapsto \frac{a\tau}{N\delta}+\frac{b}{N\delta}\mapsto \frac{\gamma(\tau)}{N}
\]
and similarly it maps $(0,1)$ to $1/N$, so that indeed $(E_{\tau},[\phi_{\tau}]_G)$ is mapped to $(E_{\gamma(\tau)},[\phi_{\gamma(\tau)}]_H)$.
\end{proof}
\begin{remark}
\label{mapremark}
Suppose that $\gamma\in \mathrm{GL}_2(\Z/N\Z)$ normalises $G\subset \mathrm{GL}_2(\Z/N\Z)$. Then the prescription $(E/S,[\phi]_G)\mapsto (E/S,[\gamma\cdot \phi]_G)$  determines a morphism of functors $\mathcal{F}_G\to \mathcal{F}_G$, and if moreover $\mathrm{det}(\gamma)\in \mathrm{det}(G)$, this specialises to a map $\mathcal{F}_G^{\mathrm{can}}\to \mathcal{F}_G^{\mathrm{can}}$. We obtain an automorphism of $X_G$. However, this does not correspond to the action of $\gamma$ on $\mathcal{H}$ by fractional linear transformations.

Instead, we can choose $g\in G$ such that $\mathrm{det}(g)=\mathrm{det}(\gamma)$. Any lift of $g^{-1}\gamma$ to $\mathrm{SL}_2(\Z)$ then normalises $\Gamma_G$ and satisfies the conditions of Proposition \ref{normalisermoduliprop}. It determines the same morphism $X_G\to X_G$ as $\gamma$ did.  
\end{remark}

\begin{example}
\label{alexample}
We consider the curve $X_0(N)/\Z[1/N]$. It is well-known that $W_N=\begin{pmatrix} 0 & -1\\N & 0\end{pmatrix}$ normalises $\Gamma_0(N)$ and $\Gamma_1(N)$. Here we have $\delta=N$ and $\mathrm{det}(G)=(\Z/N\Z)^{\times}$. We verify that
\[
W_N\pi^{-1}(B_0(N))=\left \{\begin{pmatrix} Na & b\\ Nc& Nd\end{pmatrix} \in M_2(\Z/N^2\Z)\mid b,c\in (\Z/N^2\Z)^{\times}\right\} = \pi^{-1}(B_0(N))W_N,
\]
so that $W_N$ indeed defines a morphism of $\Z[1/N]$-schemes $X_0(N)\to X_0(N)$. We similarly find that $W_N$ defines a morphism of $\Z[1/N,\zeta_N]$-schemes $\widetilde{X}_1(N)\to \widetilde{X}_1(N)$. 
\end{example}
\begin{example}
\label{ex27}
Remark \ref{mapremark} actually helps us find examples. This situation occurs when we have two groups $G_1\subset G_2\subset \mathrm{GL}_2(\Z/N\Z)$ such that $G_1$ is normal in $G_2$ and $\mathrm{det}(G_1)=\mathrm{det}(G_2)$. Any lift $\gamma\in \Gamma_{G_2}\setminus \Gamma_{G_1}$ then determines an automorphism $\theta_{\gamma}$ on $X_{G_1}$.

A concrete example of this is $B_1(N)\subset B_0(N)$.

Other examples occur when $[\Gamma_{G_1}:\Gamma_{G_2}]=2$ and $\mathrm{det}(G_1)=\mathrm{det}(G_2)$. Then also $[G_1:G_2]=2$, so that $G_2$ is normal in $G_1$. Then $\theta_{\gamma}$ (for any $\gamma\in \Gamma_{G_2}\setminus \Gamma_{G_1}$) is the involution on $X_{G_1}$ such that $X_{G_1}/\theta_{\gamma} = X_{G_2}$. 

Examples of this are $B_0(4)\subset \pi^{-1}(B_0(2))$, where $\pi: \mathrm{GL}_2(\Z/4\Z)\to \mathrm{GL}_2(\Z/2\Z)$,  $G(\mathrm{s}3^+)\subset G(\mathrm{ns}3^+)$ (the normalisers of split and non-split Cartan subgroups in $\mathrm{GL}_2(\F_3)$ respectively), and the inclusion $G(\mathrm{e7})\subset G(\mathrm{ns}7^+)$ described in Section \ref{examplesection}.
\end{example}

Next, we lift automorphisms at level $M$ to higher levels $KM$ when $K$ is coprime to $M$. This is important for understanding Atkin--Lehner operators at mixed level. 
\begin{lemma}
\label{coprimelemma}
Suppose that $N=KM$ with $\mathrm{gcd}(K,M)=1$, and we have $G_K\subset \mathrm{GL}_2(\Z/K\Z)$ and $G_M,H_M\subset \mathrm{GL}_2(\Z/M\Z)$. Consider $\gamma\in \mathrm{GL}_2(\Q)^+$ with integral coefficients and determinant $\delta\in \Z$ coprime to $K$, satisfying (\ref{morphismcondition}) of Proposition \ref{normalisermoduliprop} for $G_M$ and $H_M$.  Consider also $\eta\in \mathrm{GL}_2(\Z/K\Z)$ of determinant $\delta \mod K$ normalising $G_K$. 

Define $G=\pi_K^{-1}(G_K)\cap \pi_M^{-1}(G_M)$ and $H=\pi_K^{-1}(G_K)\cap \pi_M^{-1}(H_M)$, where $\pi_K:\mathrm{GL}_2(\Z/N\Z)\to \mathrm{GL}_2(\Z/K\Z)$ and $\pi_M: \mathrm{GL}_2(\Z/N\Z)\to \mathrm{GL}_2(\Z/M\Z)$.

Then $\gamma$ and $\eta$ determine a morphism $X_G\to X_H$ such that the diagram 
\[
\begin{tikzcd}
X_G \arrow[rr] \arrow[d] &  & X_H \arrow[d] \\
X_{G_M} \arrow[rr]       &  & X_{H_M}      
\end{tikzcd}
\]
commutes. This morphism depends only on $\gamma$ and $\eta (G_K\cap \mathrm{SL}_2(\Z/K\Z))\subset \mathrm{GL}_2(\Z/K\Z)$. 
\end{lemma}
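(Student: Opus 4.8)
The plan is to deduce the lemma from a single application of Proposition \ref{normalisermoduliprop}: I will build one integral matrix $\tilde{\gamma}\in\mathrm{GL}_2^+(\Q)$ of determinant $\delta$ which simultaneously encodes the action of $\gamma$ on the $M$-part and of $\eta$ on the $K$-part, verify condition (\ref{morphismcondition}) for the pair $(G,H)$, and then read off commutativity and the dependence statement from the complex-analytic description in that proposition. Throughout I use that $\mathrm{gcd}(K,M)=1$ and $\mathrm{gcd}(\delta,K)=1$ give $\mathrm{gcd}(K,M\delta)=1$, so that the Chinese remainder theorem identifies $\mathrm{GL}_2(\Z/N\Z)\cong\mathrm{GL}_2(\Z/K\Z)\times\mathrm{GL}_2(\Z/M\Z)$, under which $G=G_K\times G_M$ and $H=G_K\times H_M$, and likewise $\mathrm{GL}_2(\Z/N\delta\Z)\cong\mathrm{GL}_2(\Z/K\Z)\times\mathrm{GL}_2(\Z/M\delta\Z)$.

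The construction of $\tilde{\gamma}$ is the crux. Since $\mathrm{det}(\gamma)=\delta$ is a unit modulo $K$, the reduction $\gamma\bmod K$ lies in $\mathrm{GL}_2(\Z/K\Z)$ and $\eta\cdot(\gamma\bmod K)^{-1}\in\mathrm{SL}_2(\Z/K\Z)$. Because $\mathrm{SL}_2(\Z)\to\mathrm{SL}_2(\Z/KM\delta\Z)$ is surjective, I may choose $u\in\mathrm{SL}_2(\Z)$ with $u\equiv\eta(\gamma\bmod K)^{-1}\pmod K$ and $u\equiv I\pmod{M\delta}$, and set $\tilde{\gamma}:=u\gamma$. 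Then $\mathrm{det}(\tilde{\gamma})=\delta$, while $\tilde{\gamma}\equiv\gamma\pmod{M\delta}$ and $\tilde{\gamma}\equiv\eta\pmod K$. This is the key point, and I expect it to be the main obstacle: the $u\gamma$-trick produces an honest integral matrix of determinant \emph{exactly} $\delta$ realising both prescribed reductions, whereas lifting naively by the Chinese remainder theorem on entries would only control the determinant modulo $N\delta$.

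With $\tilde{\gamma}$ in hand I verify (\ref{morphismcondition}), i.e.\ $\tilde{\gamma}\,\pi^{-1}(G)\subseteq\pi^{-1}(H)\,\tilde{\gamma}$ in $M_2(\Z/N\delta\Z)$. Under the CRT splitting this becomes a product of two conditions. On the $K$-factor (where the reduction $\Z/N\delta\Z\to\Z/N\Z$ is the identity) it reads $\eta G_K\subseteq G_K\eta$, which holds because $\eta$ normalises $G_K$. On the $M$-factor it reads $\gamma\,\rho^{-1}(G_M)\subseteq\rho^{-1}(H_M)\,\gamma$ for $\rho\colon\mathrm{GL}_2(\Z/M\delta\Z)\to\mathrm{GL}_2(\Z/M\Z)$, which is exactly condition (\ref{morphismcondition}) for $\gamma$ with $(G_M,H_M)$, assumed to hold. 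Proposition \ref{normalisermoduliprop} then yields the morphism $\theta_{\tilde{\gamma}}\colon X_G\to X_H$ (with the base change appearing there), together with $\mathrm{det}(G)\subseteq\mathrm{det}(H)$.

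Finally I read off the remaining claims from the $\mathcal{H}$-description, recalling that $(\theta_{\tilde{\gamma}})_{\CC}$ sends $\Gamma_G\tau\mapsto\Gamma_H\,\tilde{\gamma}(\tau)$. For commutativity, the vertical maps are the forgetful quotients of type (M1)--(M2), and since $u\equiv I\pmod M$ we have $u\in\Gamma(M)\subseteq\Gamma_{H_M}$, so $\Gamma_{H_M}\tilde{\gamma}(\tau)=\Gamma_{H_M}\gamma(\tau)$; hence both ways around the square induce $\Gamma_G\tau\mapsto\Gamma_{H_M}\gamma(\tau)$ on $\CC$-points, and equality of the morphisms follows as they are determined by their $\CC$-points. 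For the dependence statement, replacing $\eta$ by $\eta s$ with $s\in G_K\cap\mathrm{SL}_2(\Z/K\Z)$ and choosing $u'$ accordingly gives $\tilde{\gamma}'=u'\gamma$ with $\tilde{\gamma}'\tilde{\gamma}^{-1}=u'u^{-1}\in\mathrm{SL}_2(\Z)$ reducing to $(\eta s\eta^{-1},I)$ modulo $N$; as $\eta s\eta^{-1}\in G_K\cap\mathrm{SL}_2(\Z/K\Z)$, this lies in $\Gamma_H$, whence $\theta_{\tilde{\gamma}'}=\theta_{\tilde{\gamma}}$. The same computation with $s=I$ shows independence of the auxiliary lift $u$, so the morphism depends only on $\gamma$ and the coset $\eta(G_K\cap\mathrm{SL}_2(\Z/K\Z))$.
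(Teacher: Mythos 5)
Your proof is correct and follows essentially the same route as the paper: both reduce the lemma to Proposition \ref{normalisermoduliprop} by producing a single integral matrix of determinant exactly $\delta$ that reduces to $\eta$ mod $K$ and to $\gamma$ mod $M\delta$, verify condition (\ref{morphismcondition}) factor-by-factor via the Chinese remainder theorem, and handle the coset-dependence by absorbing an $\mathrm{SL}_2(\Z)$-discrepancy congruent to $(\eta s\eta^{-1},I)$ into $\Gamma_H$ (the paper absorbs the analogous lift of $(\widetilde{\beta},I)$ into $\Gamma_G$ on the other side). Your $u\gamma$-trick simply makes explicit the lift $\al$ whose existence the paper asserts outright, a welcome extra detail rather than a different argument.
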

\begin{proof}
Recall from the proof of Proposition \ref{normalisermoduliprop} that $\theta_{\gamma}: X_{G_M}\to X_{H_M}$ is only determined by its image $\overline{\gamma}$ in $\mathrm{GL}_2(\Z/\delta M\Z)$.  As $\mathrm{gcd}(K,M\delta)=1$, we can thus find $\al\in M_2(\Z)$ of $\mathrm{det}(\al)=\delta$ lifting both $\eta$ and $\overline{\gamma}$ (and the image of $\al$ in $M_2(\Z/\delta MK\Z)$ is uniquely determined by $\overline{\gamma}$ and $\eta$). Denote by $\pi\colon \mathrm{GL}_2(\Z/\delta M\Z)\to \mathrm{GL}_2(\Z/M\Z)$ the natural map. Then $\al$ satisfies $\al \pi^{-1}(G_M)\subset \pi^{-1}(H_M)\al$ and $\al G_K=G_K\al$. As $\mathrm{gcd}(M\delta,K)=1$, we conclude that also $\al G\subset H\al$, as desired.  The commutativity of the diagram follows by construction. Finally, each $\widetilde{\beta}\in G_K\cap \mathrm{SL}_2(\Z/K\Z)$ can be lifted to $\beta\in \Gamma_{G_K}$ that is the identity mod $\delta M$, and therefore acts trivially on $X_G$. The morphism determined by $\gamma$ and $\eta\widetilde{\beta}$ is $\theta_{\al}\circ \theta_{\beta}=\theta_{\al}$.
\end{proof}
Note that we do not assume in the lemma that $\eta\in G_K$. However, if there exists $\eta\in G_K$ of determinant $\delta\mod K$, then the map $X_G\to X_H$ determined by $\gamma$ and $\eta$ is independent of the choice of $\eta\in G_K$, and we call it \emph{the} lift of $X_{G_M}\to X_{H_M}$ to $X_G\to X_H$. When such $\eta\in G_K$ does not exist, the obtained map really depends on the choice of $\eta$. This distinction becomes apparent when considering the Atkin--Lehner morphisms on $X_0(KM)$ and $\widetilde{X}_1(KM)$ determined by $W_M$. 
\begin{definition}\label{aldef}
Consider again $N=KM$ with $\mathrm{gcd}(M,K)=1$, and a group $G_K\subset \mathrm{GL}_2(\Z/K\Z)$ which is normalised by $\eta\in \mathrm{GL}_2(\Z/K\Z)$ with $\mathrm{det}(\eta)=M\mod K$. Consider $G:=\pi_K^{-1}(G_K)\cap \pi_M^{-1}(B_0(M))\subset \mathrm{GL}_2(\Z/KM\Z)$ or $G:=\pi_K^{-1}(G_K)\cap \pi_M^{-1}(\widetilde{B}_1(M))$, where $\pi_K$ and $\pi_M$ are defined as in Lemma \ref{coprimelemma}. By Lemma \ref{coprimelemma},  $W_M$ and $\eta$ define an automorphism on $X_G$, which we call an \emph{Atkin--Lehner morphism at $M$}. When $\eta\in G_K$, we call it \emph{the} Atkin--Lehner \emph{involution} at $M$ and denote it by $w_M$. 
\end{definition}

\begin{example}\label{atkinlehnerex}
 For coprime integers $K$, $M$, define  $W_M(x,y,z,w):=\begin{pmatrix} Mx & y \\ MKz & Mw \end{pmatrix}$, where $x,y,z,w\in \Z$ satisfy $\mathrm{det}(W_M(x,y,z,w))=M$. Its mod $K$ reduction is in $B_0(K)$, and moreover 
 \[
 W_M(x,y,z,w)=\begin{pmatrix} -y & x \\ -Mw & zK\end{pmatrix}\begin{pmatrix} 0 & -1 \\ M & 0\end{pmatrix},
 \]
 so its reduction mod $M^2$ is $\gamma W_M$, where $\gamma\in \mathrm{GL}_2(\Z/M^2\Z)$ reduces into $B_0(M)$ mod $M$. So indeed $W_M(x,y,z,w)$ defines the Atkin--Lehner involution $w_M$ on $X_0(KM)$ (which is independent of the choice of $x,y,z,w\in \Z$), and an Atkin--Lehner morphism $w_M(x,y,z,w)$ on  $\widetilde{X}_1(KM)$ because $B_0(K)$ normalises $\widetilde{B}_1(K)$ (dependent on the choice of $x,y,z,w\in \Z$). 
 \end{example}

 For $K\in \Z$, we define
 \[
 \gamma_K:=\begin{pmatrix} K & 0 \\ 0 & 1\end{pmatrix}.
 \]
 The action of $\gamma_K$ on $\mathcal{H}$ often leads to interesting morphisms between modular curves. 
\begin{example}
 Let $p$ be a prime. The \emph{ split Cartan subgroup} of $\mathrm{GL}_2(\F_p)$ is the group $G(\mathrm{s}p)$ of diagonal matrices. We interpret this as a group of level $p^2$ by considering its inverse image in $\mathrm{GL}_2(\Z/p^2\Z)$. We then apply Proposition \ref{normalisermoduliprop} with $\gamma_p$ to find a morphism 
 \[
 \phi:\;X_0(p^2)\longrightarrow X(\mathrm{s}p)
 \]
 defined over $\Q$, where $X(\mathrm{s}p):=X_{G(\mathrm{s}p)}$. By considering the congruence subgroups, we see that $\phi$ must be an isomorphism. Or, alternatively, the inverse is defined by $p\cdot \gamma_p^{-1}$. 
 
 On $X_0(p^2)$ we have the involution $w_{p^2}$ defined by $W_{p^2}$, see Example \ref{alexample}. On $X(\mathrm{s}p)$ this corresponds under $\phi$ to the involution defined by the matrix $i:=\frac{1}{p}\gamma_p W_{p^2}\gamma_p^{-1}=\begin{pmatrix} 0 & 1 \\ -1 & 0\end{pmatrix}$. This matrix is in fact invertible mod $p$. Denote by $G(\mathrm{s}p^+)\subset\mathrm{GL}_2(\F_p)$ the group generated by $i$ and $G(\mathrm{s}p)$. This is the normaliser of $G(\mathrm{s}p)$, and it defines a modular curve $X(\mathrm{s}p^+)$, a degree 2 quotient of $X(\mathrm{s}p)$. We conclude that $\phi$ descends to an isomorphism $X(\mathrm{s}p^+)\simeq X_0(p^2)/w_{p^2}$ over $\Q$, a fact also observed in \cite[p. 555]{conrad}.
\end{example}
\begin{example}
\label{isoex}
Consider again a prime $p$, and positive integers $b, a$. Define $c:=\mathrm{max}(a,b)$. As in the previous example, $\gamma_{p^a}$ defines an isomorphism
\[
\widetilde{X}(\Gamma_0(p^b)\cap \Gamma(p^a))\simeq \widetilde{X}(\Gamma_0(p^{c+a})\cap \Gamma_1(p^a)).
\]
For any $K,M\in \Z_{\geq 1}$ with $L:=\mathrm{gcd}(K,M)$, we thus deduce from Lemma \ref{coprimelemma} that $\gamma_K$ defines an isomorphism
\[
\widetilde{X}( \Gamma_0(M)\cap \Gamma(K))\simeq \widetilde{X}(\Gamma_0(MK^2/L)\cap \Gamma_1(K))
\]
of curves over $\Q(\zeta_K)$.
%
%
\end{example}


\section{Operators on spaces of cusp forms}
\label{sec3}
\subsection{The regular 1-forms on a modular curve}
\label{sec31}
Let $G$ be a subgroup of $\mathrm{GL}_2(\Z/N\Z)$. From now on, we shall only be concerned with curves over fields, and denote by $X_G$ the modular curve associated to $G$, base changed to $\Q(\zeta_N)^{\mathrm{det}(G)}$. 

 For $\gamma\in \mathrm{SL}_2(\Z/N\Z)$, we see that any lift of $\gamma$ to $\mathrm{SL}_2(\Z)$ acts on $X(N)$ as the automorphism $\theta_{\gamma}$, by Proposition \ref{normalisermoduliprop}. Next, consider $a\in (\Z/N\Z)^{\times}$ and the matrix $\gamma_a:=\begin{pmatrix} a & 0 \\ 0 & 1\end{pmatrix}$.  As in Remark \ref{mapremark}, the prescription $(E/S,\phi)\mapsto (E/S,\phi\circ \gamma_a)$ determines a morphism of functors $\mathcal{F}_N\to \mathcal{F}_N$. However, unless $a=1$, we have $\mathrm{det}(\gamma_a)\notin \mathrm{det}(\{1\})$ so that $\gamma_a$ does not determine a morphism $\mathcal{F}_N^{\mathrm{can}}\to \mathcal{F}_N^{\mathrm{can}}$. Define the functor $\mathcal{F}_N^{\mathrm{can},a}$ by instead mapping a scheme $S$ to those pairs $(E/S,\phi)$, where $e_N(\phi)=\zeta_N^a$. This similarly has a coarse moduli space $\widetilde{X}(N)^a/\Q(\zeta_N)$, which is the base change of $\widetilde{X}(N)$ by Galois conjugation $\sigma_a:\Q(\zeta_N)\to \Q(\zeta_N),\;\zeta_N\mapsto\zeta_N^a$. Then $\gamma_a$ does determine a morphism $\widetilde{X}(N)\to \widetilde{X}(N)^a$ \emph{of curves over $\Q(\zeta_N)$}. Composing this map with base change by $\sigma_a^{-1}$, we obtain a map of \emph{schemes} $\theta_{a}:\;X(N)\to X(N)$, whose corresponding map on function fields is merely a morphism of $\Q(\zeta_N)^{\langle \sigma_a\rangle}$-algebras. 

Each function $f\in \Q(\zeta_N)(\widetilde{X}(N))$ has a Laurent series expansion around the infinity cusp (which is a $\Q(\zeta_N)$-rational point). Denote by $q_N(\tau)=e^{2\pi i \tau/N}$ a uniformiser at this cusp, and write the expansion of $f$ in its completed local ring as $f=\sum_{n\geq -m} a_n(f) q_N^n$, where $m\in \Z_{>0}$ and each $a_n(f)\in \Q(\zeta_N)$.  As also shown by  Shimura \cite[Proposition 6.9]{shimura},  the maps just described yield a right action $\circ$ of $\mathrm{GL}_2(\Z/N\Z)$ on $\Q(\zeta_N)(\widetilde{X}(N))$ such that for each $f=\sum_{n\geq -m} a_n q_N^n\in \Q(\zeta_N)(X(N))$: 
\begin{itemize}
    \item[(i)] for each $\gamma\in \mathrm{SL}_2(\Z/N\Z)$ we have $f\circ \gamma = \theta_{\widetilde{\gamma}}^*(f)$, where $\widetilde{\gamma}$ is a lift of $\gamma$ to $\mathrm{SL}_2(\Z)$, and
    \item[(ii)] $f\circ \gamma_a=\theta_a^*(f)=\sum_{n\geq -m} \sigma_a(a_n)q_N^n$. 
\end{itemize}

From now on, we suppose that $G\subset \mathrm{GL}_2(\Z/N\Z)$ satisfies 
\[
\mathrm{det}(G)=(\Z/N\Z)^{\times} \text{ and }-I\in G.
\]
The first condition means we consider only curves defined over $\Q$. Then, again by Shimura's work \cite{shimura}, the fixed field $\Q(\zeta_N)(\widetilde{X}(N))^{G}$ defines an irreducible projective curve over $\Q$, which is simply $X_G$. 

We denote by $S_k(\Gamma,K)$ the space of weight $k$ cusp forms with respect to $\Gamma$ whose Fourier coefficients all lie in $K$. 

The action of $\mathrm{GL}_2(\Z/N\Z)$ on $\widetilde{X}(N)$ gives rise to an action on its sheaf of regular 1-forms, which in turn corresponds to modular forms. 

\begin{proposition}
\label{canonicalmodelprop}
The map $f\mapsto f(q_N) (\mathrm{d}q_N)/q_N$ defines an isomorphism 
\[
S_{2}(\Gamma(N),\Q(\zeta_N))^G \simeq H^0(X_G,\Omega).
\]
Here
 $\Omega$ is the sheaf of regular 1-forms on $X_G$.
\end{proposition}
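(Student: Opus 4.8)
The plan is to prove the isomorphism in two stages: first over $F:=\Q(\zeta_N)$, where the statement is essentially the classical identification of weight-$2$ cusp forms with regular differentials, and then to descend to $\Q$ using the semilinear action of $G$, the hypothesis $\mathrm{det}(G)=(\Z/N\Z)^{\times}$ furnishing exactly a full Galois descent datum for $F/\Q$.

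First I would record the correspondence over $\CC$. Since $\mathrm{d}q_N/q_N=(2\pi i/N)\,\mathrm{d}\tau$, the assignment $f\mapsto f(q_N)\,\mathrm{d}q_N/q_N$ agrees up to a nonzero constant with $f\mapsto f\,\mathrm{d}\tau$ on the compact Riemann surface $(X_G)_{\CC}=\Gamma_G\backslash\mathcal{H}^{*}$. Two standard facts make this a well-defined isomorphism onto $H^0((X_G)_{\CC},\Omega)$: the weight-$2$ transformation law $f(\gamma\tau)\,\mathrm{d}(\gamma\tau)=(f|_2\gamma)(\tau)\,\mathrm{d}\tau$ shows that $f\,\mathrm{d}\tau$ is $\Gamma_G$-invariant precisely when $f$ is weight-$2$ invariant, so the form descends to the quotient; and writing $f=\sum_{n\geq1}a_nq_N^n$ one has $f\,\mathrm{d}q_N/q_N=\sum_{n\geq1}a_nq_N^{n-1}\,\mathrm{d}q_N$, holomorphic at the cusp $\infty$ exactly because the constant term vanishes, the invariance transporting holomorphy to every cusp. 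Invoking Shimura's rational structures \cite{shimura} together with the fact that $q_N$ is a uniformiser at the $F$-rational cusp $\infty$, this isomorphism respects $F$-structures, so restricting to $F$-rational data gives $S_2(\Gamma_G,F)\xrightarrow{\ \sim\ }H^0((X_G)_F,\Omega)$; here I use Example~\ref{tildeisoex} to write $(X_G)_F=(X_{G_1})_F$ with $G_1:=G\cap\mathrm{SL}_2(\Z/N\Z)$, so that $S_2(\Gamma_G,F)=S_2(\Gamma(N),F)^{G_1}$.

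Next I would carry out the descent. As $G_1\trianglelefteq G$ with $G/G_1\xrightarrow{\ \mathrm{det}\ }(\Z/N\Z)^{\times}=\mathrm{Gal}(F/\Q)$ (this is where $\mathrm{det}(G)=(\Z/N\Z)^{\times}$ is essential), taking $G$-invariants factors as
\[
S_2(\Gamma(N),F)^G=\bigl(S_2(\Gamma(N),F)^{G_1}\bigr)^{G/G_1}.
\]
Writing a coset representative of determinant $a$ as $s_a\gamma_a$ with $s_a\in\mathrm{SL}_2$ and $\gamma_a=\begin{pmatrix}a&0\\0&1\end{pmatrix}$, property (ii) shows $\circ\gamma_a$ conjugates $q$-expansion coefficients by $\sigma_a$, so $\circ(s_a\gamma_a)$ is $\sigma_a$-semilinear. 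Under the isomorphism of the previous paragraph this matches $\theta_a^{*}$ on differentials: from $f\circ\gamma_a=\theta_a^{*}f$ and $\theta_a^{*}q_N=q_N$ one gets $\theta_a^{*}(\mathrm{d}q_N/q_N)=\mathrm{d}q_N/q_N$ and hence $\omega_{f\circ\gamma_a}=\theta_a^{*}\omega_f$, while property (i) handles the geometric factor $s_a$. Since by construction the $\Q$-curve $X_G$ is cut out by the $G$-fixed field $F(\widetilde{X}(N))^G$, these semilinear operators are exactly the Galois descent datum defining $X_G/\Q$ and its sheaf $\Omega$; Galois descent then gives $H^0(X_G,\Omega)=H^0((X_G)_F,\Omega)^{G/G_1}$. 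Chaining the two displays identifies $S_2(\Gamma(N),F)^G$ with $H^0(X_G,\Omega)$ via $f\mapsto f(q_N)\,\mathrm{d}q_N/q_N$.

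I expect the main obstacle to be the verification that the coefficient-conjugation of property (ii) is genuinely the arithmetic shadow of the geometric Galois action on $\Omega$, so that the semilinear operators $\circ(s_a\gamma_a)$ coincide with the descent datum of the $\Q$-model rather than differing from it by a cocycle. This rests on the earlier description of $\theta_a$ as a $\sigma_a$-semilinear automorphism of $X(N)$ and on $q_N$ being a rational uniformiser, which together force the $q$-expansion map to be Galois-equivariant. Once this is pinned down, injectivity is immediate from the $q$-expansion, and the descent promotes the $F$-isomorphism to the desired $\Q$-isomorphism.
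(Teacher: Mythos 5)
Your proposal is correct and takes essentially the same route as the paper: the paper's proof likewise observes that $f\mapsto f(q_N)\,\mathrm{d}q_N/q_N$ is a $G$-equivariant isomorphism $S_2(\Gamma(N),\Q(\zeta_N))\simeq H^0(X(N),\Omega)$ (via Shimura's rational structures, properties (i) and (ii)), takes $G$-invariants, and identifies $H^0(X_G,\Omega)=H^0(X(N),\Omega)^G$ through $\Q(X_G)=\Q(\zeta_N)(\widetilde{X}(N))^G$, citing \cite[Lemma 6.5]{zywina} for details. Your two-stage factorisation of the invariants --- first $G_1=G\cap\mathrm{SL}_2(\Z/N\Z)$ geometrically, then semilinear Galois descent along $G/G_1\simeq\mathrm{Gal}(\Q(\zeta_N)/\Q)$ --- is just an explicit unpacking of the single invariants step that the paper compresses into its citation, and your flagged ``main obstacle'' (that the semilinear operators are the genuine descent datum) is resolved exactly as you say, by the fixed-field description of $\Q(X_G)$.
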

\begin{proof}
By definition of $S_2(\Gamma(N),\Q(\zeta_N))$, the map $S_2(\Gamma(N),\Q(\zeta_N))\to H^0(X(N),\Omega_{X(N)})$ is well-defined, an isomorphism and $G$-equivariant. We then take $G$-invariants, and note that $H^0(X_G,\Omega)=H^0(X(N),\Omega)^G$ because $\Q(X_G)=\Q(\zeta_N)(\widetilde{X}(N))^G$. For more details, see \cite[Lemma 6.5]{zywina}.  
\end{proof}


Our strategy will be to compute a basis for $S_{2}(\Gamma(N),\Q(\zeta_N))^G$ and derive equations for $X_G$ by finding equations between these cusp forms, following Galbraith \cite{galbraith}.
 \subsection{The conjugation trick}
 
%
It will be useful to split the level $N$ into two parts $N=MK$, where $\mathrm{gcd}(M,K)=1$, such that the image of $G$ in $\mathrm{GL}_2(\Z/M\Z)$ is $B_0(M)$. Then
\begin{align}\label{anotheriso}
S_{2}(\Gamma(N),\Q(\zeta_N))^{G}=S_{2}(\Gamma_0(M)\cap\Gamma(K),\Q(\zeta_K))^{G},
\end{align}
by definition of $B_0(M)$. 

From now on, we think of $\Gamma_G=\Gamma_0(M)\cap \Gamma_{G_K}$ as being a ``level $K$ congruence subgroup of $\Gamma_0(M)$'', rather than a level $N$ congruence subgroup of $\mathrm{SL}_2(\Z)$. When $M>1$, the benefit of this is twofold: we will be able to consider more automorphisms on $X_G$, and computations are faster. 

%
A problem with computing fixed spaces of modular forms as above, is that no algorithm for computing spaces of the form $S_{2}(\Gamma_0(M)\cap \Gamma(K),\Q(\zeta_K))$ is currently implemented in a computer algebra system. We fix this by conjugating with $\gamma_K$. The trick to studying modular forms on $\Gamma(K)$, as used  by Banwait and Cremona \cite{banwait} and later by Zywina \cite{zywina}, is to notice that
$
\gamma_K^{-1}\Gamma(K)\gamma_K = \Gamma_0(K^2)\cap \Gamma_1(K).
$
In fact, we already saw in Example \ref{isoex} that $\gamma_K$ induces an isomorphism
\begin{align}\label{gammaKiso}
\widetilde{X}(\Gamma_0(M)\cap \Gamma(K))\simeq \widetilde{X}(\Gamma_0(MK^2)\cap \Gamma_1(K)).
\end{align}
Efficient algorithms for computing spaces of cusp forms for $\Gamma_0(MK^2)\cap \Gamma_1(K)$ using modular symbols \emph{have} been implemented in \texttt{Magma} and \texttt{Sage} thanks to the work of Cremona \cite{cremona} and Stein \cite{stein}, amongst others. 
\subsection{Normalisers and statement of the main theorem}
\label{normalisersec}
In this section, we explain for which elements  $A\in \mathcal{N}_{\Gamma_G}$ we can determine its action on $X_G$ explicitly. We would like to be able to act with such $A$ on $S_2(\Gamma_0(MK^2)\cap \Gamma_1(K),\overline{\Q})$, as is the case for $G$. 

In their famous paper, Conway and Norton \cite[Section 3]{conway} mention the ``curious fact'' that the divisors $h$ of 24 are exactly those positive integers satisfying that $xy\equiv 1 \mod h$ implies $x\equiv y \mod h$. Equivalently, they are the integers $h$ such that
\[
\widetilde{T}_h:=\begin{pmatrix} 1 & 1/h \\ 0 & 1\end{pmatrix} 
\]
normalises $\Gamma_0(h^2)$.
\begin{lemma}
\label{normaliserlemma}
The normaliser of $\Gamma_0(M)$ in $\mathrm{PSL}_2(\R)$ is generated by $\Gamma_0(M)$ itself, the Atkin--Lehner matrices $W_m(x,y,z,w)$ for $m\mid M$ with $\mathrm{gcd}(M/m,m)=1$, and $\widetilde{T}_h$, where $h$ is the largest divisor of 24 such that $h^2\mid M$.
\end{lemma}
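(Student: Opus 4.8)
The plan is to locate the normaliser inside a manageable group and then pin down the integral shape of its elements. Since any $\gamma$ normalising $\Gamma_0(M)$ satisfies $\gamma\Gamma_0(M)\gamma^{-1}=\Gamma_0(M)$, it lies in the commensurator of $\Gamma_0(M)$ in $\mathrm{PSL}_2(\R)$; as $\Gamma_0(M)$ is commensurable with $\mathrm{SL}_2(\Z)$, this commensurator is the classical one, namely $\mathrm{PGL}_2^+(\Q)$. Hence I may represent $\gamma$ by a matrix $\begin{pmatrix} a & b \\ c & d\end{pmatrix}$ with coprime integer entries and positive determinant $D$, and the task reduces to a purely arithmetic classification of which such matrices conjugate $\Gamma_0(M)$ into itself.

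Next I would extract divisibility constraints by conjugating the two standard parabolic generators. Writing out $\gamma \begin{pmatrix}1&1\\0&1\end{pmatrix}\gamma^{-1}$ and $\gamma\begin{pmatrix}1&0\\M&1\end{pmatrix}\gamma^{-1}$ and demanding that both again lie in $\Gamma_0(M)$ yields relations such as $D\mid a^2,\ D\mid c^2,\ D\mid d^2,\ D\mid ac$, together with $MD\mid c^2$ and $D\mid b^2 M$. Combined with coprimality of the entries, these force the lower-left entry to be divisible by $M/h$, bound the denominator of the upper-right entry by some $h$ with $h^2\mid M$, and constrain $\det\gamma$ to be an exact (Hall) divisor of $M$ up to a square factor dividing $M$; in short they push $\gamma$ into the Atkin--Lehner--Conway normal form generated by the $W_m(x,y,z,w)$ and a matrix of $\widetilde{T}_h$-type.

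The crux is to identify $h$ precisely as the largest divisor of $24$ with $h^2\mid M$, and here the number-theoretic ``curious fact'' recalled before the lemma is indispensable. Concretely, for $s=\begin{pmatrix}\alpha & \beta\\ r h^2 & \delta\end{pmatrix}\in \Gamma_0(h^2)$ a direct computation gives
\[
\widetilde{T}_h\, s\, \widetilde{T}_h^{-1}=\begin{pmatrix} \alpha+r h & \beta+\dfrac{\delta-\alpha}{h}-r \\[2mm] r h^2 & \delta-r h\end{pmatrix},
\]
whose upper-right entry is integral exactly when $h\mid(\delta-\alpha)$. Since $\alpha\delta\equiv 1 \pmod h$, this holds for every such $s$ precisely when $xy\equiv 1\pmod h$ implies $x\equiv y\pmod h$, i.e. precisely when $h\mid 24$. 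This both explains why the denominator $h$ may occur and shows it can be no larger than the largest divisor of $24$ whose square divides $M$. I expect this step, rather than the bookkeeping of the previous paragraph, to be the genuine obstacle.

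Finally I would assemble the generation statement and check the converse. Given a normalising $\gamma$ in normal form, one peels off Atkin--Lehner matrices $W_m(x,y,z,w)$ for the exact divisors $m\mid M$ carried by $\det\gamma$ and a suitable power of $\widetilde{T}_h$, reducing $\gamma$ to an element of $\Gamma_0(M)$ and thereby exhibiting it in the subgroup generated by the listed elements. Conversely, each generator normalises $\Gamma_0(M)$: the $W_m$ by the computations of Example \ref{atkinlehnerex} and Proposition \ref{normalisermoduliprop}, and $\widetilde{T}_h$ by the displayed identity above, since when $h\mid 24$ it sends a matrix with lower-left entry divisible by $h^2$ (hence by $M$ after incorporating the remaining level) back into $\Gamma_0(M)$. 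This closes the argument.
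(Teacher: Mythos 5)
First, a remark on the comparison: the paper does not actually prove this lemma --- it attributes it to Atkin--Lehner and points to Conway--Norton's elegant (projective lattice-theoretic) proof --- so your attempt is measured against those classical arguments. Your skeleton is sensible: normaliser $\subset$ commensurator $=\mathrm{PGL}_2^+(\Q)$ is correct, and the ``curious fact'' computation in your third paragraph is both correct and genuinely the key number-theoretic input (your displayed conjugation identity checks out, and necessity follows since any pair $x,y$ with $xy\equiv 1 \bmod h$ is realised as the diagonal of some $s\in\Gamma_0(h^2)$).

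However, two steps do not hold up. First, the divisibilities in your second paragraph, extracted from just the two parabolics, are necessary but do not ``force'' the normal form: for $M=25$ the matrix $\begin{pmatrix}5&1\\0&5\end{pmatrix}$ satisfies every constraint you list ($D\mid a^2$, $D\mid d^2$, $D\mid b^2M$, $MD\mid c^2$, \ldots) yet fails to normalise $\Gamma_0(25)$; the exclusion $h\mid 24$ enters only through your paragraph-three computation, which you carry out only for exactly upper-triangular elements conjugating $\Gamma_0(h^2)$. For a general normalising $\gamma$ you must still prove that it factors through that special shape, and that reduction is precisely the normal-form classification you assume rather than derive. Second, and more decisively, the peeling argument in your final paragraph fails on concrete elements. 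Take $M=4$, $h=2$: the matrix $\begin{pmatrix}1&0\\2&1\end{pmatrix}$ normalises $\Gamma_0(4)$ (conjugating $\begin{pmatrix}\alpha&\beta\\4r&\delta\end{pmatrix}$ produces lower-left entry $2\alpha+4r-4\beta-2\delta\equiv 2(\alpha-\delta)\equiv 0\bmod 4$, using that $\alpha,\delta$ are odd), has determinant $1$ (so no Atkin--Lehner factor to peel) and integral entries (no visible $\widetilde{T}_2$ denominator), yet lies in no coset $\widetilde{T}_2^{\,j}\Gamma_0(4)$, since every element of such a coset has lower-left entry divisible by $4$. It does lie in the generated group --- indeed $W_4^{-1}\widetilde{T}_2W_4=\begin{pmatrix}1&0\\-2&1\end{pmatrix}$ --- but only as a genuinely mixed word in the generators. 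So ``peel off the $W_m$'s carried by $\det\gamma$, then a power of $\widetilde{T}_h$, and land in $\Gamma_0(M)$'' cannot be the induction; the generation statement requires either Atkin--Lehner's double-coset manipulations or Conway's lattice argument, and this --- not the divisibility bookkeeping --- is where your proposal is missing an idea.
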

 This lemma is originally due to Atkin and Lehner \cite{atkinlehner}; see \cite[Section 3]{conway} for an elegant proof. While the Atkin--Lehner matrices act on $X_0(M)$ over $\Q$, the matrix $\widetilde{T}_h$ determines an automorphism on $X_0(M)$ defined over $\Q(\zeta_h)$. When $h$ divides $M$ and $H\subset \mathrm{GL}_2(\Z/M\Z)$ is a subgroup, define
 \[
 H^h:=\{M\in H \mid \mathrm{det}(M)\equiv 1 \mod h\}. 
 \]
 By Example \ref{tildeisoex}, we find that $X_0(M)_{\Q(\zeta_h)}=X_{B_0^h(M)}$. Similarly, we have $(X_G)_{\Q(\zeta_h)}=X_{G^h}$. On $X_{G^h}$, the elements in $\mathcal{N}_{G^h}$ act by automorphisms. We restrict to those automorphisms which also act on $X(\Gamma_0(M)\cap\Gamma(K))$. 
\noindent\begin{corollary}\label{normcor}
Let $h$ be the largest divisor of 24 such that $h^2\mid M$. The normaliser $\mathcal{N}_{\Gamma_0(M)}$ acts by automorphisms on $X(\Gamma_0(M)\cap \Gamma(K))_{\Q(\zeta_{Kh})}$.
\end{corollary}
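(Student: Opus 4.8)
The plan is to reduce the statement to its generators. By Lemma \ref{normaliserlemma}, the group $\mathcal{N}_{\Gamma_0(M)}$ is generated by $\Gamma_0(M)$ itself, the Atkin--Lehner matrices $W_m(x,y,z,w)$ for $m\mid M$ with $\gcd(M/m,m)=1$, and $\widetilde{T}_h$. Since the automorphisms of a fixed curve over a fixed field form a group, and each element of $\mathcal{N}_{\Gamma_0(M)}$ acts on $\mathcal{H}$ as a product of these generators, it suffices to produce, for every generator, an automorphism of $X(\Gamma_0(M)\cap\Gamma(K))_{\Q(\zeta_{Kh})}$ realising its action as a fractional linear transformation; the induced automorphisms then compose correctly. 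Throughout I will use that $\gcd(M,K)=1$, so that each relevant determinant --- namely $1$ for elements of $\Gamma_0(M)$, $m$ for $W_m$, and $h^2$ for the matrix $\widetilde T_h$ scaled to integral entries, all of which divide a power of $M$ --- is coprime to $K$.

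The key simplification is to base change to $\Q(\zeta_K)$. There, by Example \ref{tildeisoex}, we have $X(\Gamma_0(M)\cap\Gamma(K))_{\Q(\zeta_K)}=\widetilde{X}(\Gamma_0(M)\cap\Gamma(K))_{\Q(\zeta_K)}$, which lets me work with the group $\widetilde{G}=\pi_M^{-1}(B_0(M))\cap\pi_K^{-1}(\{I\})$, whose level-$K$ part $\{I\}$ is trivially normalised by every element of $\mathrm{GL}_2(\Z/K\Z)$. I would then apply Lemma \ref{coprimelemma} to the coprime splitting $N=MK$: viewed at level $M$, each generator is an automorphism of $X_0(M)$ (the $W_m$ by Definition \ref{aldef}, the matrix $\widetilde{T}_h$ because $h^2\mid M$, and elements of $\Gamma_0(M)$ trivially, as they satisfy condition (\ref{morphismcondition}) for $B_0(M)$ since $\Gamma_0(M)$ reduces into the group $B_0(M)$), while the required $\eta\in\mathrm{GL}_2(\Z/K\Z)$ of matching determinant normalising the trivial group $\{I\}$ may be chosen freely. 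Lemma \ref{coprimelemma} then lifts each generator to an automorphism of $\widetilde{X}(\Gamma_0(M)\cap\Gamma(K))$, and hence of $X(\Gamma_0(M)\cap\Gamma(K))$ after base change to $\Q(\zeta_K)$.

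It remains to track the fields of definition. The Atkin--Lehner matrices and the elements of $\Gamma_0(M)$ act on $X_0(M)$ over $\Q$, so their coprime lifts, constructed over the base field $\Q(\zeta_K)$ of $\widetilde{X}(\Gamma_0(M)\cap\Gamma(K))$, are defined over $\Q(\zeta_K)$. The matrix $\widetilde{T}_h$, by contrast, determines an automorphism of $X_0(M)$ only over $\Q(\zeta_h)$, as recorded in the discussion preceding the corollary; lifting it as above therefore produces an automorphism defined over the compositum $\Q(\zeta_K)\cdot\Q(\zeta_h)=\Q(\zeta_{Kh})$, where the last equality uses $\gcd(K,h)=1$. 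Collecting the three cases shows that every generator, and hence all of $\mathcal{N}_{\Gamma_0(M)}$, acts by automorphisms of $X(\Gamma_0(M)\cap\Gamma(K))_{\Q(\zeta_{Kh})}$.

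I expect the main obstacle to lie in the bookkeeping of fields of definition rather than in the group theory. The factor $\Q(\zeta_K)$ is precisely what is needed to trivialise the level-$K$ determinant obstruction, so that the normalisation hypothesis of Lemma \ref{coprimelemma} (equivalently condition (\ref{morphismcondition}) of Proposition \ref{normalisermoduliprop}) is met via $\widetilde{G}$ with a freely chosen $\eta$, while the factor $\Q(\zeta_h)$ is forced by $\widetilde{T}_h$ alone on $X_0(M)$. The subtlety to verify carefully is that these two contributions do not interact to enlarge the field beyond $\Q(\zeta_{Kh})$, and that Lemma \ref{coprimelemma} genuinely applies to $\widetilde{T}_h$ with the chosen $\eta$ of determinant $h^2\bmod K$; given the coprimality $\gcd(M,K)=1$ this is exactly the situation the lemma was designed to handle.
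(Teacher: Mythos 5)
Your skeleton---reduce to the generators of Lemma \ref{normaliserlemma}, pass to the trivial group at level $K$ via Example \ref{tildeisoex} (a group which every matrix in $\mathrm{GL}_2(\Z/K\Z)$ normalises), and treat the level-$M$ and level-$K$ parts separately---is essentially the paper's proof, with Lemma \ref{coprimelemma} substituted for the paper's direct verification of condition (\ref{morphismcondition}) at level $MK$. That substitution is legitimate for the elements of $\Gamma_0(M)$ and for the Atkin--Lehner matrices (Examples \ref{alexample} and \ref{atkinlehnerex} supply the level-$M$ hypothesis, and taking $\eta$ to be the mod $K$ reduction of the chosen integral representative pins down the choices).

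The gap is the generator $\widetilde{T}_h$, which is the only nontrivial case and the entire reason the field in the statement is $\Q(\zeta_{Kh})$ rather than $\Q(\zeta_K)$. As you set things up, Lemma \ref{coprimelemma} would require the integral matrix $h\widetilde{T}_h$ (determinant $h^2$) to satisfy (\ref{morphismcondition}) with $G_M=H_M=B_0(M)$, and this is false: if it held, Proposition \ref{normalisermoduliprop} would make $\widetilde{T}_h$ act on $X_0(M)$ over $\Q$, contradicting the fact (which you yourself cite) that it is only defined over $\Q(\zeta_h)$. Concretely, writing $h\widetilde{T}_h\, g = g'\,h\widetilde{T}_h$ with $g=\left(\begin{smallmatrix} a& b\\ Mc& d\end{smallmatrix}\right)$ and $g'=\left(\begin{smallmatrix} x& y\\ Mz& w\end{smallmatrix}\right)$ forces, modulo $h$ (using $h^2\mid M$), both $x\equiv a$ and $x\equiv d$, hence $a\equiv d \bmod h$; but for general $g\in \pi^{-1}(B_0(M))$ one only knows $ad\equiv \det(g)\bmod h^2$. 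The paper therefore replaces $B_0(M)$ by $B_0(M)^h$, so that $ad\equiv 1\bmod h$, and then crucially invokes the Conway--Norton characterisation of the divisors of $24$---$xy\equiv 1 \bmod h$ implies $x\equiv y \bmod h$---to conclude $a\equiv d\bmod h$. Your proposal never performs this verification: you flag it as ``the subtlety to verify carefully'' and defer to the discussion preceding the corollary, but that discussion is an unproved assertion whose justification is exactly this computation, and the divisor-of-24 property appears nowhere in your argument. Once one works with $H=\pi_K^{-1}(\{I\})\cap\pi_M^{-1}(B_0(M)^h)$, as the paper does, the curve $X_H$ is canonically defined over $\Q(\zeta_{Kh})$ and your compositum bookkeeping becomes automatic rather than requiring a separate descent argument.
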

\begin{proof}
In view of Example \ref{tildeisoex}, it suffices to show that $\mathcal{N}_{\Gamma_0(M)}$ acts by automorphisms on $X_H$, where $H\subset \mathrm{GL}_2(\Z/MK\Z)$ is the intersection of the inverse images of $\widetilde{G}(K)=\{I\}\subset \mathrm{GL}_2(\Z/K\Z)$ and $B_0(M)^h\subset \mathrm{GL}_2(\Z/M\Z)$. Note that $X_H$ is defined over $\Q(\zeta_{Kh})$. We can now verify explicitly that Atkin--Lehner matrices, $h\widetilde{T}_h$ and elements of $\Gamma_0(M)$ satisfy condition (\ref{morphismcondition}) to define a morphism $X_H\to X_H$. Here we note that any matrix in $\mathrm{GL}_2(\Z/K\Z)$ normalises $\{I\}=\widetilde{G}(K)$. To see that $h\widetilde{T}_h$ satisfies (\ref{morphismcondition}), we crucially use that $xy\equiv 1 \mod h$ implies $x\equiv y\mod h$. 
\end{proof}
 Consider a subgroup $\mathcal{A}\subset (\mathcal{N}_{\Gamma_0(M)}\cap \mathcal{N}_{G^h})/\mathrm{P}\Gamma_G$. By definition, this acts by automorphisms on $X_{G^h}=(X_G)_{\Q(\zeta_h)}$. Moreover, by intersecting with $\mathcal{N}_{\Gamma_0(M)}$, we have ensured that $\mathcal{A}$ acts on $X(\Gamma_0(M)\cap \Gamma(K))$ over $\Q(\zeta_{Kh})$, just like $G$ does, and we can treat $\mathcal{A}$ and $G$ in a similar way. Taking $\mathcal{A}$-invariants in Proposition \ref{canonicalmodelprop} and applying (\ref{gammaKiso}) and (\ref{anotheriso}), we obtain
\begin{align}\label{1formsiso}
H^0((X_G/\mathcal{A})_{\Q(\zeta_h)},\Omega)=S_{2}(\Gamma_0(MK^2)\cap \Gamma_1(K),\Q(\zeta_{Kh}))^{G,\mathcal{A}},
\end{align}
where $\al\in \mathcal{A}$ and $\al\in \mathrm{SL}_2(\Z/MK\Z)$ act on cusp forms $f$ by $f\mapsto f[\gamma_K^{-1}\al\gamma_K]$, and matrices $\gamma_a=\begin{pmatrix} a & 0 \\ 0 & 1\end{pmatrix}\in \mathrm{GL}_2(\Z/MK\Z)$  act on cusp forms by Galois conjugating Fourier coefficients by $\sigma_a$ (because $\gamma_a$ and $\gamma_K$ commute), c.f. Section \ref{sec31}.
\begin{theorem}
 Suppose that $M$ and $K$ are coprime integers, $G\subset \mathrm{GL}_2(\Z/MK\Z)$ satisfies $-I\in G$, $\mathrm{det}(G)=(\Z/MK\Z)^{\times}$, $JGJ=G$, and $G_M=B_0(M)$. Let $h$ be the largest divisor of 24 such that $h^2\mid M$. Consider a finite subgroup $\mathcal{A}\subset (\mathcal{N}_{\Gamma_0(M)}\cap \mathcal{N}_{G^h})/\mathrm{P}\Gamma_G$, normalised by $J$ and determining a $\mathrm{Gal}(\overline{\Q}/\Q)$-invariant subgroup of $\mathrm{Aut}_{\overline{\Q}}(X_G)$. Then Algorithm \ref{algorithm2} is a terminating algorithm for computing a model for the curve $X_G/\mathcal{A}$ over $\Q$, provided it has genus $g\geq 2$. 
 \end{theorem}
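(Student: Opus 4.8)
The plan is to realise a model for $X_G/\mathcal{A}$ by computing a $\Q$-rational basis of its regular differentials as $q$-expansions and then extracting the defining equations through Galbraith's procedure \cite{galbraith}. The backbone is the identification (\ref{1formsiso}), which presents the differentials $H^0((X_G/\mathcal{A})_{\Q(\zeta_h)},\Omega)$ as the space of cusp forms $S_2(\Gamma_0(MK^2)\cap\Gamma_1(K),\Q(\zeta_{Kh}))^{G,\mathcal{A}}$. First I would check that both invariance conditions are effective. The ambient space $S_2(\Gamma_0(MK^2)\cap\Gamma_1(K))$ with its diamond action over a cyclotomic field is computed by modular symbols, and $q$-expansions of a basis are produced by the algorithm of the earlier sections --- here the hypotheses $JGJ=G$ and that $J$ normalises $\mathcal{A}$ are exactly what make the twist-operator computation of $q$-expansions applicable. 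The group $G$ acts through the slash operators $f\mapsto f[\gamma_K^{-1}\alpha\gamma_K]$ for $\alpha\in\mathrm{SL}_2$ and through Galois conjugation $\sigma_a$ of Fourier coefficients for the diagonal matrices $\gamma_a$, and $\mathcal{A}$ acts by the same recipe (Section \ref{sec31}); hence cutting out the simultaneous $(G,\mathcal{A})$-fixed space is a finite linear-algebra problem, computable to any $q$-adic precision.

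The second step is descent to $\Q$. The hypothesis that $\mathcal{A}$ determines a $\Gal(\overline{\Q}/\Q)$-invariant subgroup of $\mathrm{Aut}_{\overline{\Q}}(X_G)$, together with $J$-normality, guarantees that $X_G/\mathcal{A}$ is defined over $\Q$, so that (\ref{1formsiso}) is the base change to $\Q(\zeta_h)$ of the $\Q$-vector space $H^0(X_G/\mathcal{A},\Omega)$. I would recover this $\Q$-structure by taking invariants under the Galois action of $\Gal(\Q(\zeta_h)/\Q)$, which operates on the computed cusp-form space through the $\gamma_a$-operators (that is, $\sigma_a$ on Fourier coefficients combined with the induced permutation of forms). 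Solving this as one further linear condition over $\Q$ yields a $\Q$-basis $\omega_1,\dots,\omega_g$ of regular differentials, still presented as truncated $q$-expansions, which is precisely the input Galbraith's method requires.

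Finally I would build the model. Since $g\geq 2$, the curve $X_G/\mathcal{A}$ is either non-hyperelliptic, in which case the canonical map $(\omega_1:\cdots:\omega_g)$ embeds it in $\P^{g-1}$ with homogeneous ideal generated in degrees $2$ and $3$ by Petri's theorem, or hyperelliptic, in which case Galbraith explains how to detect this and produce a planar model. In either case a homogeneous degree-$d$ relation among the $\omega_i$ manifests as a $\Q$-linear dependence among the $q$-expansions of the degree-$d$ monomials, which becomes visible once sufficiently many Fourier coefficients are known (using that the $q$-expansion map at the infinity cusp is injective on forms); the algorithm searches degree by degree, raising the precision until the relations found cut out an irreducible curve of arithmetic genus $g$. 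Termination follows because Petri's bound (and its hyperelliptic analogue in \cite{galbraith}) caps the degrees in which generators occur, and only finitely many coefficients are needed to certify a genuine relation at each degree.

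The hard part will be the certification underlying termination: relations found at low precision may cut out a variety strictly larger than $X_G/\mathcal{A}$, so one must prove that, at sufficient precision, the detected relations generate the \emph{entire} canonical ideal rather than a proper subideal. The resolution is to know $g$ exactly in advance --- from the genus of $\Gamma_G$ and the order of $\mathcal{A}$ via Riemann--Hurwitz --- and to use Petri's description of the canonical ideal to verify that the computed equations define a curve of genus $g$; this check is what turns an apparent heuristic into a provably terminating algorithm, and is the step I expect to require the most care.
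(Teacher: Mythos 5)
Your overall architecture does match the paper's: the identification (\ref{1formsiso}) of differentials with fixed cusp forms, Algorithm \ref{algorithm1} for the $\langle \Gamma_G,\mathcal{A}\rangle$-fixed space, descent to $\Q$ via the remaining elements of $G$, and Galbraith's method with a Sturm-type certification. However, there is a genuine gap at the descent step. You dismiss cutting out the simultaneous $(G,\mathcal{A})$-fixed space as ``a finite linear-algebra problem, computable to any $q$-adic precision'', but this is exactly the step the paper flags as the obstacle in Section \ref{rationalstrsec}: an element $A\in G_K$ with $d=\det(A)\neq 1$ acts on cusp forms only $\Q$-semilinearly (Galois conjugation $\sigma_d$ of Fourier coefficients composed with a slash operator), and the integration pairing (\ref{pairing}) between modular symbols and cusp forms does \emph{not} descend to a pairing of $\Q$-vector spaces. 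Concretely, writing $A=B\cdot\gamma_d$, the $\mathrm{SL}_2$-part $\widetilde{B}\in\Gamma_0(M)$ does not act computably on $q$-expansions; its action can only be computed on homology and then transported through the isomorphism (\ref{pairingiso}), which is known only up to a scalar on each Hecke eigenspace --- the scaling issue of Section \ref{qexps}. The transport is legitimate only inside real twist orbit spaces, and even there the paper must prove two claims your proposal never isolates: (i) $\widetilde{B}$ need not commute with $J$, so it need not preserve the $+1$-eigenspace $H_{\CC}$; the paper shows its restriction to the $\Gamma_G$-invariants still lands in $O(\mu)^+$, using that $JAJ\cdot A^{-1}\in G_K\cap\mathrm{SL}_2(\Z/K\Z)$ (this is where the hypotheses $JGJ=G$ and $-I\in G$ earn their keep \emph{beyond} Algorithm \ref{algorithm1}); and (ii) the semilinear part $\gamma_d$ preserves the Galois-closed span $V(f)^+$ of twist orbit spaces, via the identity $(f|R_{\chi}\mathrm{pr}_LB_d)^{\sigma}=g(\overline{\chi})^{\sigma}(f^{\sigma}\otimes\chi^{\sigma})|\mathrm{pr}_LB_d$. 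Without (i) and (ii) the linear algebra you invoke cannot even be set up, since the two halves of $A$ live on opposite sides of a pairing that is not defined over $\Q$; this is the content of Steps (4)--(6) of Algorithm \ref{algorithm2} and the heart of the paper's proof.

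A secondary remark: what you single out as ``the hard part'' --- certifying that the detected relations cut out the whole canonical ideal rather than a larger variety --- is handled in the paper rather painlessly: Lemma \ref{sturmbound} gives the exact precision bound $\texttt{prec}>d(2g-2)-(d-1)$ making any degree-$d$ relation provable, and Remark \ref{degreermk} (Petri) bounds the degrees needed, with the remaining geometric verification delegated to Galbraith. Your suggestion to verify via a Riemann--Hurwitz computation of $g$ and a genus check on the computed model is workable and close to Galbraith's original practice, but it addresses the easier half of the problem while the semilinear-descent mechanism above is missing entirely.
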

 \begin{remark} \label{allowedsets}
 While we have restricted the allowed automorphisms by intersecting with $\mathcal{N}_{\Gamma_0(M)}$, we have not excluded the two natural subsets.  
 Firstly, recall that any $\al\in \mathcal{N}_{\Gamma_0(M)}$ determines a morphism on $X_0(M)_{\Q(\zeta_h)}$. Because $G_K$ has surjective determinant and $\al$ has determinant coprime to $K$, we can by Lemma \ref{coprimelemma}  extend this to a morphism on $X_G$ determined by a matrix in $\mathcal{N}_{\Gamma_0(M)}\cap \mathcal{N}_{G^h}$. The allowed automorphisms thus contain the Atkin--Lehner involutions and the morphisms determined by $\widetilde{T}_h$. We will see that $J$ indeed normalises Atkin--Lehner operators. Secondly, any $\al\in \mathrm{SL}_2(\Z/K\Z)$ normalising $G_K$ determines an automorphism on $X_{G_K}$, after lifting $\al$ to $\mathrm{SL}_2(\Z)$. By lifting $\al$ to $\Gamma_0(M)$, we obtain a morphism on $X_G$ determined by a matrix in $\mathcal{N}_{\Gamma_0(M)}\cap \mathcal{N}_{G^h}$. 
 \end{remark}
 We shall first find a way (Algorithm \ref{algorithm1}) to determine the fixed space \newline$S_2(\Gamma_0(MK^2)\cap \Gamma_1(K),\Q(\zeta_{Kh}))^{\langle \Gamma_G,\mathcal{A}\rangle}$, after which we consider the action of matrices in $G_K$ with determinant unequal to 1 in Section \ref{rationalstrsec}. 
\begin{remark}We have decided in  the remainder of this article to work only with weight 2 cusp forms, but all computational arguments generalise in a straightforward way to arbitrary even weight.
\end{remark}

\subsection{The homology group}
We work in the homology group, following Cremona's strategy \cite{cremona}. More specifically, in this section, we follow unpublished work of Cremona (used in  \cite{annotatedsagecode}) to describe a $\CC$-bilinear pairing between cusp forms and homology. 

We consider any group $G\subset \mathrm{GL}_2(\Z/N\Z)$ corresponding to a congruence subgroup $\Gamma:=\Gamma_G$ of level $N$. We assume again that $G$ is normalised by
 $
 J=\begin{pmatrix} -1 & 0 \\ 0 & 1\end{pmatrix}.
 $
For the geometric curve $(X_G)_{\CC}$, we then have the pairing between 1-forms and homology:
\[
S_2(\Gamma,\CC)\times H_1(X_G,\Z) \to \CC,\;\; \langle f,\gamma\rangle :=2\pi i \int_{\gamma} f(z)\mathrm{d} z.
\]
Extending this to $H_1(X_\Gamma,\R)$, the pairing becomes $\R$-bilinear. As in Section \ref{sec31}, the action of $J$ on $\mathcal{H}\subset \CC$ composed with complex conjugation determines an involution of $\R$-algebras $J^*:\;\CC(X_G)\to \CC(X_G)$, or equivalently an involution on the Weil restriction $\mathrm{Res}_{\CC/\R}(X_G)_{\CC}$.  Here $z\in \mathcal{H}$ is mapped to $z^*:=-\overline{z}=\overline{J(z)}$, and $f\in S_2(\Gamma,\CC)$ to $J^*(f)=\overline{f(z^*)}$.  We note that $f\mapsto J^*(f)$ acts as complex conjugation on the Fourier coefficients of $f$, c.f. Section \ref{sec31}. We similarly obtain an involution on $H_1(X_{\Gamma},\R)$. Denote by $H_1(X_{\Gamma},\R)^+$ its $+1$-eigenspace. Then our pairing restricts to an exact duality of real vector spaces
\[
S_2(\Gamma,\R)\times H_1(X_\Gamma,\R)^+\to \R.
\]
Define 
\[
H_{\CC}(\Gamma):=H_1(X_\Gamma,\R)^+\otimes \CC.
\]
Finally, we extend the pairing $\CC$-linearly \emph{on both sides} to obtain an exact $\CC$-bilinear pairing 
\begin{align}
\label{pairing}
S_2(\Gamma,\CC)\times H_{\CC}(\Gamma)\to \CC,\; (f,\mu)\mapsto \langle f,\mu\rangle. 
\end{align}
The pairing (\ref{pairing}) identifies $H_{\CC}(\Gamma)$ with the dual of $S_2(\Gamma,\CC)$. Subsequently, the Petersson inner product identifies $S_2(\Gamma,\CC)$ with its own dual. We thus obtain an isomorphism 
\begin{align}\label{pairingiso}S_2(\Gamma,\CC)\to H_{\CC}(\Gamma),\; f\mapsto \gamma_f
\end{align}
(due to Cremona) defined by
\[
\langle g,\gamma_f\rangle = \langle g,f\rangle \text { for all }g\in S_2(\Gamma,\CC),
\]
where the latter pair of brackets denotes the Petersson inner product. Our strategy for studying $S_2(\Gamma,\CC)$ is to study $H_{\CC}(\Gamma)$ instead, and transform the results under this isomorphism. We note that the Petersson inner product is sesqui-linear in its second argument, whereas (\ref{pairing}) is $\CC$-linear. This means that 
\[
\gamma_{\al f}=\overline{\al}\gamma_f \text{  for each }\al\in \CC. 
\]

 The great benefit of studying the homology group is that the action of  $\mathcal{N}_{\Gamma}$ on $H_1(X(\Gamma),\Z)$ can be computed explicitly when $\Gamma=\Gamma_0(K^2M)\cap \Gamma_1(K)$, i.e. for each $\gamma\in \mathcal{N}_{\Gamma}$ one can find a matrix for this linear action on $H_1(X_{\Gamma},\CC)$ in terms of an explicit basis. This uses a description of the homology group as \emph{modular symbols}. For a detailed overview of these algorithms, we refer the reader to \cite{cremona} and \cite{stein}. 

 Thanks to these algorithms, when $\Gamma\supset \Gamma_0(MK^2)\cap \Gamma_1(K)$, one can compute $H_1(X_{\Gamma},\Q)$ as the subspace of $H_1(X_{\Gamma_0(MK^2)\cap \Gamma_1(K)},\Q)$ fixed by $\Gamma$ with ease. The challenge is to relate this subspace $H_1(X_{\Gamma},\CC)$ to $q$-expansions of modular forms under the isomorphism (\ref{pairingiso}), as explained in the next Section \ref{qexps}.

In the remainder of this subsection, we consider any congruence subgroup $\Gamma$ normalised by $J$, but one may think of this as being $\Gamma_0(MK^2)\cap \Gamma_1(K)$. 

Recall that every $U=\begin{pmatrix} a & b \\ c &d\end{pmatrix}\in \mathrm{GL}_2^+(\Q)$ acts on the complex upper half-plane by fractional linear transformations, giving rise to an action on meromorphic functions on $\mathcal{H}$
\[
[U]: \;f(\tau)\mapsto (f|U)(\tau):=(c\tau+d)^{-2}f\circ U(\tau)
\]
and on paths $\gamma:[0,1]\to \mathcal{H}$:
\begin{align}\label{paths}
[U]:\;  \gamma \mapsto U(\gamma):=U\circ \gamma.
\end{align}

\begin{definition}
We define the \emph{Hecke algebra} $\mathbb{T}_{\Gamma}$ to be the $\CC$-algebra of $\CC$-valued functions on $\mathrm{P}\Gamma\backslash \mathrm{P}\mathrm{GL}_2^+(\Q)/\mathrm{P}\Gamma$, where the $\mathrm{P}$ denotes the image in $\mathrm{PGL}_2(\Q)$.
\end{definition}

Each $T\in \mathbb{T}_{\Gamma}$ can be represented as a $\CC$-linear combination of double cosets $\mathrm{P}\Gamma \al \mathrm{P}\Gamma$. Such a coset has a left-$\mathrm{P}\Gamma$ action, and splits as a finite disjoint union $\mathrm{P}\Gamma \al \mathrm{P}\Gamma =\sqcup \mathrm{P}\Gamma \al_i$. This way, we obtain a \emph{Hecke operator} $T_{\al}=\sum_i [\al_i]$, acting on $(X_{\Gamma})_{\CC}$ as a correspondence, and consequently  on $S_2(\Gamma,\CC)$ and $H_1(X_{\Gamma},\CC)$.

The following lemma, due to Cremona and used in  \cite{annotatedsagecode}, describes how these two actions interact with the pairing (\ref{pairing}).
\begin{lemma}
\label{transformationlemma}
Consider $T\in \T_{\Gamma}$, and $\gamma\in H_{\CC}(\Gamma)$ such that $JT(\gamma)=T(\gamma)$ (i.e. $T(\gamma)$ remains in the +1-eigenspace). This is always the case when $TJ=JT$. Then for all $f\in S_2(\Gamma,\CC)$ we have
\begin{itemize}
    \item[(a)] $\langle f|T,\gamma \rangle = \langle f,T(\gamma)\rangle$ 
    \item[(b)] $\gamma_{f|T^*}=T(\gamma_f)$,
\end{itemize}
where $T^*$ is the adjoint of $T$ with respect to the Petersson inner product. 
\end{lemma}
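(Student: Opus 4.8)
The plan is to establish (a) by reducing to a single matrix and performing a change of variables in the defining integral, and then to deduce (b) purely formally from (a) together with the definitions of $\gamma_f$ in (\ref{pairingiso}) and of the Petersson adjoint. By $\CC$-bilinearity of the pairing (\ref{pairing}) and linearity of $T$, it suffices to prove (a) for a single operator $T=[U]$ with $U=\begin{pmatrix} a & b \\ c & d\end{pmatrix}$ one of the coset representatives $\al_i$ of $T$. I would attach to $f$ the $1$-form $\om_f := 2\pi i\, f(z)\,\dd z$, so that $\langle f,\gamma\rangle = \int_{\gamma}\om_f$, and compute the pullback of $\om_f$ along the fractional linear transformation $z\mapsto U(z)$. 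Using $\dfrac{\dd\,U(z)}{\dd z}=\dfrac{\det U}{(cz+d)^2}$, one finds $U^*\om_f = \det(U)\,(cz+d)^{-2} f(U(z))\,2\pi i\,\dd z$, which is precisely $\om_{f|U}$ once $[U]$ is taken in the normalisation that makes the slash action descend to $\mathrm{P}\mathrm{GL}_2^+(\Q)$ (i.e.\ with the determinant factor absorbed, as the definition of $\T_{\Gamma}$ via double cosets requires). The change-of-variables formula then gives, with the constant exactly $1$,
\[
\langle f, U(\gamma)\rangle = \int_{U(\gamma)}\om_f = \int_{\gamma} U^*\om_f = \int_{\gamma}\om_{f|U} = \langle f|U,\gamma\rangle,
\]
and summing over the representatives $\al_i$ of $T$ yields $\langle f|T,\gamma\rangle = \langle f,T(\gamma)\rangle$.

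The delicate point is not this calculation but the claim that both sides are actually defined. The individual paths $\al_i(\gamma)$ from (\ref{paths}) need not descend to closed cycles on $X_{\Gamma}$; what descends is the sum $T(\gamma)=\sum_i \al_i(\gamma)$, which is the homological mirror of the fact that $f|T=\sum_i f|\al_i$ is again $\Gamma$-invariant. Moreover, for $\langle f,T(\gamma)\rangle$ to make sense through (\ref{pairing}) we need $T(\gamma)\in H_{\CC}(\Gamma)=H_1(X_{\Gamma},\R)^+\otimes\CC$, i.e.\ $T(\gamma)$ must lie in the $+1$-eigenspace of $J$; this is exactly the standing hypothesis $JT(\gamma)=T(\gamma)$, which holds automatically when $TJ=JT$ since then $T$ preserves the $J$-eigenspaces. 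I would verify these well-definedness statements first, so that both sides of (a) are legitimate elements of the picture before invoking the change of variables.

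For (b) I would argue by non-degeneracy (exact duality) of (\ref{pairing}): it suffices to show $\langle g,\gamma_{f|T^*}\rangle = \langle g,T(\gamma_f)\rangle$ for every $g\in S_2(\Gamma,\CC)$. The defining property (\ref{pairingiso}) of $\gamma_{\bullet}$ gives $\langle g,\gamma_{f|T^*}\rangle = \langle g, f|T^*\rangle$, the Petersson product. On the other side, applying part (a) with $f$ replaced by $g$ and $\gamma$ replaced by $\gamma_f$ gives $\langle g,T(\gamma_f)\rangle = \langle g|T,\gamma_f\rangle = \langle g|T, f\rangle$, again the Petersson product. These two expressions coincide by the very definition of the adjoint, $\langle g|T, f\rangle = \langle g, f|T^*\rangle$, completing the proof. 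Here one needs $T(\gamma_f)\in H_{\CC}(\Gamma)$, which is the standing hypothesis applied to $\gamma=\gamma_f$; again this is automatic under $TJ=JT$.

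The step I expect to be the main obstacle is the careful bookkeeping that makes (a) hold with constant exactly $1$: one must pin down the normalisation of the slash action $[U]$ on $\mathrm{P}\mathrm{GL}_2^+(\Q)$ so that the factor $\det U$ produced by the change of variables is precisely the one built into the Hecke action on forms, and one must check that the correspondence $T(\gamma)$ descends to a genuine class in the $+1$-eigenspace. Once these compatibilities are in place, the analytic content is the elementary pullback identity, and (b) is a formal consequence of (a) and the adjoint relation.
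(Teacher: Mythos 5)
Your proposal is correct and follows essentially the same route as the paper: part (a) is the pullback identity $(f|U)(\tau)\,\dd\tau = f(U(\tau))\,\dd U(\tau)$ integrated over the coset representatives of $T$ (valid precisely because the hypothesis $JT(\gamma)=T(\gamma)$ keeps $T(\gamma)$ in $H_{\CC}(\Gamma)$), and part (b) is the formal duality argument via the definition of $\gamma_f$ and the Petersson adjoint. Your version merely spells out what the paper's terse proof leaves implicit, in particular the determinant normalisation needed for the slash action to descend to $\mathrm{P}\mathrm{GL}_2^+(\Q)$, which is a worthwhile clarification but not a different method.
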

\begin{proof}
\label{pairingtransformationlemma}
By definition of the action, $(f|U)(\tau)\dd \tau = f(U(\tau))\dd U(\tau)$ for each matrix $U$. Integrating this relation for each $U$ in $T$ gives us part (a), as long as $T(\gamma)\in H_{\CC}(\Gamma)$. Part (b) follows by definition of the Petersson inner product.
\end{proof}
We recall (see e.g. \cite[Proposition 5.5.2]{diamondshurman}) that for $U\in \mathrm{GL}_2^+(\Q)$, we have
$
[U]^*=\mathrm{det}(U)[U^{-1}].
$
\subsection{$q$-expansions}
\label{qexps}
A downside of using the homology group is that the pairing (\ref{pairing}) is not defined explicitly in terms of $q$-expansions. It is therefore a priori not obvious what the $q$-expansion is of the cusp form mapped to a given element of $H_{\CC}(\Gamma)$ under (\ref{pairingiso}). 

When $\Gamma=\Gamma_0(MK^2)\cap \Gamma_1(K)$, the solution is to use Hecke operators and their common eigenvectors. Let $f_1,\ldots,f_n\in S_2(\Gamma_0(MK^2)\cap \Gamma_1(K),\CC)$ be the Hecke eigenforms.  Their $q$-expansions (up to scaling) are determined by Hecke eigenvalues, and these Hecke eigenvalues can be computed for the corresponding Hecke eigenvectors $\mu_1,\ldots,\mu_n \in  H_{\CC}(\Gamma_0(MK^2)\cap \Gamma_1(K))$ instead, by Lemma \ref{transformationlemma}. This yields the $q$-expansions for a basis of $S_2(\Gamma_0(MK^2)\cap \Gamma_1(K),\CC)$.

This approach does not work for $S_2(\Gamma,\CC)$ when $\Gamma\supset \Gamma_0(MK^2\cap \Gamma_1(K)$ and the subspace $S_2(\Gamma,\CC)\subset S_2(\Gamma_0(MK^2)\cap \Gamma_1(K),\CC)$ is not a direct sum of Hecke eigenspaces, the problem being that each $\mu_i$ only corresponds to $f_i$ under (\ref{pairingiso}) \emph{up to scaling}. The scaling issue makes it hard to translate linear combinations of eigenforms under (\ref{pairingiso}). 
%
 
 The aim for the remainder of this article is to present a solution to this problem. The crucial idea -- due to Cremona (see \cite{annotatedsagecode}) -- is that we \emph{can} find a $q$-expansion for the cusp form corresponding to a linear combination $\al_1\mu_1+\al_2\mu_2$ when $\mu_2$ is a twist of $\mu_1$. While two Hecke eigenvectors need not always be twists, we define in Section \ref{twistorbitsection} the \emph{twist orbit space} of $\mu_1$, and show that the action of $\Gamma$ preserves this space.

\subsection{Operators on modular forms and modular symbols}In this subsection, we study the congruence subgroup
$
\Gamma_0(MK^2)\cap \Gamma_1(K),
$
where again $N,M\in \Z_{\geq 1}$ (not necessarily coprime).  We define $N:=K^2M$. Let $\mathcal{D}_K$ be the group of Dirichlet characters on $(\Z/K\Z)^{\times}$. Then
\[
S_2(\Gamma_0(MK^2)\cap \Gamma_1(K),\CC)=\bigoplus_{\ep\in \mathcal{D}_K}S_2(N,\ep_N),
\]
where $\ep_N$ denotes the composition of $(\Z/N\Z)^{\times}\to (\Z/K\Z)^{\times}$ and $\ep$, and $S_2(N,\ep)$ is the common eigenspace in $S_2(\Gamma_1(N),\CC)$ for the diamond operators $\langle d \rangle$ for $\mathrm{gcd}(d,N)=1$ with eigenvalues $\ep(d)$ respectively. The diamond operators also act on modular symbols, and Stein \cite{stein} defines their eigenspaces $H_1(N,\ep)\subset H_1(X_{\Gamma_1(N)},\CC)$. We obtain a similar decomposition 
\[
H_1(X_{\Gamma_0(MK^2)\cap \Gamma_1(K)},\CC)=\bigoplus_{\ep\in\mathcal{D}_K}H_1(N,\ep).
\]
The diamond operators commute with $J$, and $H_1(N,\ep)^+$ is identified with $S_2(N,\overline{\ep})$ under the isomorphism (\ref{pairingiso}). Note here the complex conjugation of $\ep$ which occurs due to Lemma \ref{transformationlemma} (b) since $\langle d \rangle^*=\langle d \rangle ^{-1}$ for $d$ coprime to $N$. Similarly, for primes $p\nmid N$, we have (see e.g. \cite{diamondshurman})
\[
T_p^*=\langle p\rangle ^{-1}T_p. 
\]
\begin{definition}
Let $V$ be a representation of the Hecke algebra. A \emph{Hecke eigenspace} of $V$ is a simultaneous eigenspace for the Hecke operators $T_p$ and $\langle p \rangle$, for all but finitely many primes $p$. 
\end{definition}
Under the isomorphism (\ref{pairingiso}), we conclude by Lemma \ref{transformationlemma} that Hecke eigenspaces in $S_2(\Gamma_0(MK^2)\cap \Gamma_1(K),\CC)$ are identified with Hecke eigenspaces in $H_{\CC}(\Gamma_0(MK^2)\cap \Gamma_1(K))$, but the eigenvalues get complex conjugated.

%
\begin{definition}
Let $L$ be a positive integer, and define, as before,
$
\widetilde{T}_L:=\begin{pmatrix} 1 & 1/L \\ 0 & 1\end{pmatrix}.
$
 For $\chi\in \mathcal{D}_L$ we define the \emph{twist operator}
\[
R_{\chi}(L):=\sum_{u\in \Z/L\Z} \overline{\chi}(u)[\widetilde{T}_L^u].
\]
When $L=\mathrm{cond}(\chi)$, we write $R_{\chi}(L)=R_{\chi}$. 
\end{definition}
It is important to note that, even if $f$ is of level $N$ and $L\mid N$, then $\chi$ is considered as a mod $L$ character in the definition of $f|R_{\chi}(L)$; not as a mod $N$ character. 

We saw in Corollary \ref{normcor} that $\widetilde{T}_1$ acts on $\widetilde{X}(\Gamma_0(M)\cap \Gamma(K))$ (and hence normalises $\Gamma_0(M)\cap \Gamma(K))$. By (\ref{gammaKiso}), conjugation by $\gamma_K$ shows that $\widetilde{T}_K=\gamma_K^{-1}\widetilde{T}_1\gamma_K$ normalises $\Gamma_0(MK^2)\cap \Gamma_1(K)$ and acts on its spaces of cusp forms (with coefficients in $\Q(\zeta_K)$) and homology; hence  so does $R_{\chi}(K)$ for $\chi\in \mathcal{D}_K$. 
\begin{definition}
Consider $L\mid N$, $d\mid N/L$ and $\ep\in \mathcal{D}_N$ of conductor dividing $L$. Let $\ep': (\Z/L\Z)^{\times}\to \CC^{\times}$ be the corresponding mod $L$ character. Let $D_d:=\begin{pmatrix} 1&0\\ 0&d\end{pmatrix}$ and suppose that  $\eta_1,\ldots,\eta_s$ are the defined so that $D_d\eta_1,\ldots,D_d\eta_s$ are a full set of coset representatives for the left-action of $\Gamma_0(N)$ on $\begin{pmatrix}1&0 \\ 0 & d\end{pmatrix}\Gamma_0(L)$. Then we have maps
\[
B_d^{N/L}=B_d: S_2(L,\ep')\to S_2(N,\ep), \quad f\mapsto f|\begin{pmatrix} d& 0 \\ 0&1\end{pmatrix}
\]
and 
\[
\mathrm{Tr}_d^{N/L}: S_2(N,\ep)\to S_2(L,\ep'),\quad f \mapsto \sum_{i=1}^s\ep'(\gamma_i)^{-1}f|D_d\gamma_i
\]
satisfying that $\mathrm{Tr}^{N/L}_d\circ B_d^{N/L}$ is multiplication by $[\Gamma_0(N):\Gamma_0(L)]$. These operators similarly act on homology as in (\ref{paths}).
\end{definition}
\begin{lemma}
\label{operatortranslation}
We have $JR_{\chi}(L)J=\chi(-1)R_{\chi}(L)$, $JB_dJ=B_d$ and $J\mathrm{Tr}_d^{N/L}J=\mathrm{Tr}_d^{N/L}$ for all $\chi\in \mathcal{D}_L$ and all positive integers $d\mid N/L$. 
\end{lemma}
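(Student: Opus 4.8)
The three operators $R_{\chi}(L)$, $B_d$ and $\mathrm{Tr}_d^{N/L}$ are all $\CC$-linear combinations of single-matrix operators $[U]$ acting on homology via (\ref{paths}), and on $H_1(X_{\Gamma},\CC)$ the involution $J$ acts $\CC$-linearly by the matrix $J=\begin{pmatrix}-1&0\\0&1\end{pmatrix}$ with $J^2=I$ (on modular symbols the anti-holomorphic structure $\tau\mapsto-\overline{\tau}$ coincides with the fractional linear action of $J$, since all endpoints of the symbols are rational). The plan is therefore to exploit the identity $J[U]J=[JUJ]$ and reduce each statement to an elementary matrix conjugation together with bookkeeping of the scalar coefficients. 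I would verify the three identities for this action on homology, where everything is $\CC$-linear and transparent; the corresponding statements for the slash action on $S_2$ then follow through the pairing (\ref{pairing}) and Lemma \ref{transformationlemma}.

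For $R_{\chi}(L)$, the key computation is $J\widetilde{T}_L^{u}J=\widetilde{T}_L^{-u}$, which one reads off directly since conjugation by the diagonal $J$ negates the off-diagonal entry. Hence
\[
JR_{\chi}(L)J=\sum_{u\in\Z/L\Z}\overline{\chi}(u)\,[J\widetilde{T}_L^{u}J]=\sum_{u}\overline{\chi}(u)\,[\widetilde{T}_L^{-u}]=\overline{\chi}(-1)\sum_{v}\overline{\chi}(v)\,[\widetilde{T}_L^{v}],
\]
after reindexing $v=-u$. Since $\chi(-1)\in\{\pm1\}$ we have $\overline{\chi}(-1)=\chi(-1)$, giving $JR_{\chi}(L)J=\chi(-1)R_{\chi}(L)$. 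For $B_d$ the matrix $\begin{pmatrix}d&0\\0&1\end{pmatrix}$ is diagonal, hence commutes with $J$, so $JB_dJ=[J\begin{pmatrix}d&0\\0&1\end{pmatrix}J]=B_d$ immediately.

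The substantive case is $\mathrm{Tr}_d^{N/L}=\sum_{i=1}^{s}\ep'(\eta_i)^{-1}[D_d\eta_i]$. Here again $D_d=\begin{pmatrix}1&0\\0&d\end{pmatrix}$ is diagonal and commutes with $J$, so $JD_d\eta_iJ=D_d(J\eta_iJ)$. The map $\eta\mapsto J\eta J$ negates the off-diagonal entries while fixing the diagonal; consequently it preserves $\Gamma_0(L)$ and $\Gamma_0(N)$ and leaves the lower-right entry unchanged, so that $\ep'(J\eta_iJ)=\ep'(\eta_i)$. Because conjugation by $J$ descends to a bijection of $\Gamma_0(N)\backslash\,\begin{pmatrix}1&0\\0&d\end{pmatrix}\Gamma_0(L)$, the collection $\{D_d(J\eta_iJ)\}_i$ is again a full set of coset representatives, i.e.\ a permutation of the original one. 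I would then conclude by invoking the well-definedness of $\mathrm{Tr}_d^{N/L}$: the summand $\ep'(\eta)^{-1}[D_d\eta]$ depends only on the coset $\Gamma_0(N)D_d\eta$, so reindexing the permuted sum reproduces $\mathrm{Tr}_d^{N/L}$ exactly, giving $J\mathrm{Tr}_d^{N/L}J=\mathrm{Tr}_d^{N/L}$. The main obstacle is precisely this last step: one must check that the character normalisations $\ep'(\eta_i)^{-1}$ are matched correctly under the permutation of representatives, which rests on confirming that $J$-conjugation fixes the relevant diamond values and that the left $\Gamma_0(N)$-ambiguity in the representatives is absorbed by the defining property of the trace operator.
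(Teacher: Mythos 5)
Your proof is correct and takes essentially the same approach as the paper's: the paper likewise handles $B_d$ by direct matrix multiplication, handles $\mathrm{Tr}_d^{N/L}$ by invoking independence of the choice of coset representatives together with the facts that $J$ commutes with $D_d$ and normalises $\Gamma_0(N)$ and $\Gamma_0(L)$, and handles $R_{\chi}(L)$ via the identity $J\widetilde{T}_L^{u}J=\widetilde{T}_L^{-u}$ followed by reindexing. Your additional verification that conjugation by $J$ fixes diagonal entries, hence the character values $\ep'(\eta_i)$, merely spells out a detail the paper's terser proof leaves implicit.
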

\begin{proof}
For $B_d$ one simply multiplies matrices. For $\mathrm{Tr}_d^{N/L}$, it suffices to note that $\mathrm{Tr}_d^{N/L}$ is independent of the choice of coset representatives, and that $J$ commutes with $D_d$ and normalises $\Gamma_0(N)$ and $\Gamma_0(L)$. For $R_{\chi}(L)$, we use its definition and note that $J\widetilde{T}_K^aJ=\widetilde{T}_K^{-a}$. 
\end{proof}


\begin{lemma}
\label{Bdtranslationlemma}
Consider $f\in S_2(N,\ep)$ and $g\in S_2(L,\ep')$, where $L\mid N$. Let $d\mid N/L$, and let $\chi\in \mathcal{D}_L$ with $\chi(-1)=1$. Then 
\begin{align*}
\gamma_{f|R_{\chi}}&=R_{\overline{\chi}}(\gamma_f),\\
[\Gamma_0(N):\Gamma_0(L)]\cdot \gamma_{g|B_d^{N/L}}&=\mathrm{Tr}_d^{N/L}(\gamma_g) \text{ and}\\
\gamma_{f|\mathrm{Tr}_d^{N/L}}&=[\Gamma_0(N):\Gamma_0(L)]\cdot \mathrm{B}_d^{N/L}(\gamma_f).
\end{align*}
\end{lemma}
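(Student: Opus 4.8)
The plan is to deduce all three identities from part (b) of Lemma~\ref{transformationlemma}, which reads $\gamma_{f|T^*}=T(\gamma_f)$, by computing the Petersson adjoint $T^*$ of each operator via the rule $[U]^*=\det(U)[U^{-1}]$. In each case the eigenspace hypothesis $JT(\gamma)=T(\gamma)$ is guaranteed by Lemma~\ref{operatortranslation}, since the operators in play commute with $J$ (using $\chi(-1)=1$ for the twist operator, and $JB_dJ=B_d$, $J\mathrm{Tr}_dJ=\mathrm{Tr}_d$ for the others).

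First I would treat the twist operator. Writing $R_{\chi}(L)=\sum_{u}\overline{\chi}(u)[\widetilde{T}_L^{u}]$ and using $\det(\widetilde{T}_L^{u})=1$, the adjoint rule gives $[\widetilde{T}_L^{u}]^*=[\widetilde{T}_L^{-u}]$, so that, conjugating the scalar coefficients and reindexing $u\mapsto -u$, one finds $R_{\chi}(L)^*=\sum_{u}\chi(u)[\widetilde{T}_L^{-u}]=\chi(-1)R_{\overline{\chi}}(L)$. As $\chi(-1)=1$ this equals $R_{\overline{\chi}}(L)$, and symmetrically $R_{\overline{\chi}}(L)^*=R_{\chi}(L)$. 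Applying Lemma~\ref{transformationlemma}(b) with $T=R_{\overline{\chi}}$ then yields $\gamma_{f|R_{\chi}}=\gamma_{f|R_{\overline{\chi}}^*}=R_{\overline{\chi}}(\gamma_f)$; the hypothesis holds since $JR_{\overline{\chi}}(L)J=\overline{\chi}(-1)R_{\overline{\chi}}(L)=R_{\overline{\chi}}(L)$ by Lemma~\ref{operatortranslation}.

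For the remaining two identities the key point is a single adjunction between the level-raising and trace maps, namely $\mathrm{Tr}_d^{N/L}=[\Gamma_0(N):\Gamma_0(L)]\cdot (B_d^{N/L})^*$, equivalently $(B_d^{N/L})^*=[\Gamma_0(N):\Gamma_0(L)]^{-1}\mathrm{Tr}_d^{N/L}$. Granting this, the third identity is Lemma~\ref{transformationlemma}(b) with $T=\mathrm{Tr}_d^*=[\Gamma_0(N):\Gamma_0(L)]B_d$, giving $\gamma_{f|\mathrm{Tr}_d}=\mathrm{Tr}_d^*(\gamma_f)=[\Gamma_0(N):\Gamma_0(L)]B_d(\gamma_f)$; the second is the same lemma with $T=B_d^*=[\Gamma_0(N):\Gamma_0(L)]^{-1}\mathrm{Tr}_d$, giving $\gamma_{g|B_d}=B_d^*(\gamma_g)=[\Gamma_0(N):\Gamma_0(L)]^{-1}\mathrm{Tr}_d(\gamma_g)$, which rearranges to the stated form. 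Equivalently, since the pairing (\ref{pairing}) is perfect and $\gamma_{\bullet}$ is characterised by $\langle h,\gamma_g\rangle=\langle h,g\rangle$ with the Petersson product on the right, one may test the second identity against an arbitrary $h\in S_2(N)$ and reduce, via Lemma~\ref{transformationlemma}(a), to the Petersson relation $\langle \mathrm{Tr}_d h,g\rangle_L=[\Gamma_0(N):\Gamma_0(L)]\langle h,B_d g\rangle_N$.

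The main obstacle is establishing this adjunction with the correct constant. I would compute $(B_d)^*$ from $[U]^*=\det(U)[U^{-1}]$ with $U=\begin{pmatrix}d&0\\0&1\end{pmatrix}$, carefully carrying the determinant factors through the passage to $D_d=\begin{pmatrix}1&0\\0&d\end{pmatrix}$, and then reassembling the adjoint as a sum over a full set of representatives $D_d\eta_i$ for the action of $\Gamma_0(N)$ on $\begin{pmatrix}1&0\\0&d\end{pmatrix}\Gamma_0(L)$. The twist $\ep'(\eta_i)^{-1}$ in the definition of $\mathrm{Tr}_d$ is exactly what forces the output into the eigenspace $S_2(L,\ep')$ and produces the clean scalar, while the recorded identity $\mathrm{Tr}_d^{N/L}\circ B_d^{N/L}=[\Gamma_0(N):\Gamma_0(L)]\cdot\mathrm{id}$ serves as a consistency check on the normalisation. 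The one genuinely delicate step is tracking the level-dependence of the Petersson normalisation between levels $L$ and $N$, which is where the index factor ultimately originates.
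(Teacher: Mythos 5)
Your proposal is correct and takes essentially the same route as the paper: the paper likewise reduces all three identities to Lemma \ref{transformationlemma} combined with the adjoint computations $\widetilde{T}_L^*=\widetilde{T}_L^{-1}$ (giving $R_{\overline{\chi}}^*=R_{\chi}$ when $\chi(-1)=1$) and the recalled Petersson adjunction $\langle f|\mathrm{Tr}_d^{N/L},g\rangle = [\Gamma_0(N):\Gamma_0(L)]\langle f,g|B_d\rangle$, merely unwinding part (b) into an explicit four-step pairing chain via part (a), which is exactly your alternative "test against arbitrary $h$" formulation. Your additional checks — verifying the hypothesis $JT(\gamma)=T(\gamma)$ via Lemma \ref{operatortranslation} and locating the origin of the index constant in the level-dependence of the unnormalised Petersson product — are sound and, if anything, slightly more careful than the paper's argument, which takes the adjunction as standard.
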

\begin{proof}
For the first equality, we use that $\widetilde{T}_L^*=\widetilde{T}_L^{-1}$ and recall that the isomorphism (\ref{pairingiso}) is $\CC$-linear in both arguments, to find that
\[
\langle h,\gamma_{f|R_{\chi}}\rangle = \langle h,f|R_{\chi}\rangle = \langle h|R_{\overline{\chi}},f\rangle =\langle h|R_{\overline{\chi}},\gamma_f\rangle = \langle h,R_{\overline{\chi}}(\gamma_f)\rangle,
\]
where we used Lemma \ref{transformationlemma} (a) in the last equality. 

Next, recall that
\[
\langle f|\mathrm{Tr}_d^{N/L},g\rangle = [\Gamma_0(N):\Gamma_0(L)]\langle f,g|B_d\rangle .
\]
Define $I:=[\Gamma_0(N):\Gamma_0(L)]$. We thus find that 
\[
I\cdot \langle g,\gamma_{f|B_d}\rangle = I\cdot \langle g,f|B_d\rangle = \langle g|\mathrm{Tr}_d^{N/L},f\rangle = \langle g|\mathrm{Tr}_d^{N/L},\gamma_f\rangle= \langle g,\mathrm{Tr}_d^{N/L}(\gamma_f)\rangle,
\]
as desired. Now repeat this with $I\cdot B_d$ and $\mathrm{Tr}_d^{N/L}$ reversed.
\end{proof}
We define one final operator.
\begin{definition}
Suppose that $L\mid N$ and $\ep\in \mathcal{D}_N$ of conductor dividing $L$. Then we denote by
\[
\mathrm{pr}_{N/L}: \; S_2(N,\ep)\to B_1^{N/L}(S_2(L,\ep'))
\]
the orthogonal projection map onto the subspace, where ``orthogonal'' refers to the Petersson inner product, and $\ep': (\Z/L\Z)^{\times}\to \CC^{\times}$ is the character $\ep$ factors through.

Similarly, we denote by
\[
\pi_{N/L}:\; H_1(N,\ep)\to \mathrm{Tr}_1^{N/L}(H_1(L,\ep'))
\]
the orthogonal projection onto the image of $\mathrm{Tr}_1^{N/L}$.
\end{definition}
By Lemma \ref{Bdtranslationlemma}, we find that these operators are indeed dual to each other, i.e.
\begin{align}
\label{prequivalence}
\pi_{N/L}(\gamma_f) = \gamma_{\mathrm{pr}_{N/L}(f)} \text{ for all } f\in S_2(N,\ep).
\end{align}

Next, we study the $R_{\chi}(L)$-operators in more detail. We define
$
U_q:=\sum_{u=0}^{q-1}\left[\begin{pmatrix} 1 & u \\ 0 & q\end{pmatrix}\right]
$
and recall a list of properties of the $R_{\chi}(L)$ operators. 
\begin{lemma}
\label{longlemma2}
Consider a character $\chi$ of conductor dividing $L$, where $L\mid N$.
\begin{itemize}
\item[(a)] Suppose $f\in S_2(N,\ep)$ and $\mu \in H_1(N,\ep)$, where $\ep$ has conductor $N'$, and set $\widetilde{N}:=\mathrm{lcm}(N,N'L,L^2)$. Then $f|R_{\chi}(L)\in S_2(\widetilde{N},\ep\chi^2)$ and $R_{\chi}(L)(\mu)\in H_1(\widetilde{N},\ep\overline{\chi}^2)$.
\item[(b)] If $\mathrm{gcd}(L_1,L_2)=1$, $\chi\in \mathcal{D}_{L_1}$ and $\psi\in \mathcal{D}_{L_2}$, then $R_{\chi}(L_1)R_{\psi}(L_2)=\chi(L_2)\psi(L_1)R_{\chi\psi}(L_1L_2)$.
    \item[(c)] We have $f|R_{\chi}=g(\overline{\chi})f\otimes \chi$, where $g(\overline{\chi})$ is the Gauss sum of $\overline{\chi}$.
    \item[(d)] When $p\nmid LN$, we have $R_{\chi}(L)T_p=\chi(p)T_pR_{\chi}(L)$.
    \item[(e)] For $a\in \Z$ with $\mathrm{gcd}(a,L)=1$, we have $\phi(L)\widetilde{T}_L^a=\sum_{\ep\in\mathcal{D}_L}\ep(a)R_{\ep}(L)$.
    \item[(f)] Suppose that $q$ is prime and $N=q^sL$ for some $s>0$. Suppose further that $f\in S_2(N,\ep)$ is a newform with $a_1(f)=1$ and $\psi$ is a character whose conductor is a power of $q$. Then $f\otimes \psi=g-g|U_Q|B_Q$, where $Q$ is a power of $q$ and $g\in S_2(QL,\ep\psi^2)$ is the newform with $a_1(g)=1$.
\end{itemize}
\end{lemma}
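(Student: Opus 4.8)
The plan is to treat the six parts in order of increasing difficulty, deriving (a)--(e) from the Fourier expansion of $R_{\chi}(L)$ together with orthogonality of Dirichlet characters, and reserving the Atkin--Li structure theory of newforms for part (f), which is the only genuine obstacle.

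The computation underlying everything is that $\widetilde{T}_L^u$ is upper unipotent (its lower-left entry is $0$), so its automorphy factor in weight $2$ is trivial; thus for $f=\sum_n a_n q^n\in S_2(N,\ep)$ one has $f|[\widetilde{T}_L^u]=\sum_n a_n e^{2\pi i nu/L}q^n$, and hence
\[
f|R_{\chi}(L)=\sum_n a_n c_n q^n,\qquad c_n:=\sum_{u\bmod L}\overline{\chi}(u)\,e^{2\pi i nu/L}.
\]
Substituting $u\mapsto a^{-1}u$ for $\gcd(a,L)=1$ gives the key relation $c_{an}=\chi(a)c_n$. Part (c) is then the classical separation of the Gauss sum: for $L=\mathrm{cond}(\chi)$ one gets $c_n=g(\overline{\chi})\chi(n)$, so $f|R_{\chi}=g(\overline{\chi})\,(f\otimes\chi)$. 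Parts (b) and (e) I would prove as identities in the operator algebra (hence valid on both cusp forms and homology): for (e), expanding $\sum_{\ep}\ep(a)R_{\ep}(L)=\sum_u[\widetilde{T}_L^u]\sum_{\ep}\ep(au^{-1})$ and applying orthogonality of the characters of $(\Z/L\Z)^{\times}$ leaves only $u\equiv a$, with weight $\phi(L)$; for (b), using $\widetilde{T}_{L_1}^{u_1}\widetilde{T}_{L_2}^{u_2}=\widetilde{T}_{L_1L_2}^{\,u_1L_2+u_2L_1}$ and the $\mathrm{CRT}$ bijection $(u_1,u_2)\mapsto u_1L_2+u_2L_1$, the only remaining step is to factor $\overline{\chi\psi}(v)=\overline{\chi}(u_1)\overline{\chi}(L_2)\,\overline{\psi}(u_2)\overline{\psi}(L_1)$, the cross-factors $\overline{\chi}(L_2)\overline{\psi}(L_1)$ being inverted by the scalar $\chi(L_2)\psi(L_1)$ in the statement.

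Part (a) now follows: the nebentypus $\ep\chi^2$ is the standard character shift under twisting (consistent with $c_{an}=\chi(a)c_n$), while the level bound $\widetilde{N}=\mathrm{lcm}(N,N'L,L^2)$ is the classical estimate for the level of a twist, which I would cite or else re-derive by checking invariance of $f|R_{\chi}(L)$ under $\Gamma_0(\widetilde{N})$ through matrix identities of the form $\widetilde{T}_L^u\gamma=\gamma'\widetilde{T}_L^{u'}$; the homology half then follows from the cusp-form half via the duality (\ref{pairingiso}) and Lemma \ref{Bdtranslationlemma}, the conjugate $\overline{\chi}^2$ appearing because $R_{\chi}(L)$ on homology corresponds to $R_{\overline{\chi}}(L)$ on forms. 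Part (d) is a one-line consequence of the case $a=p$ of the key relation, namely $c_{mp}=\chi(p)c_m$: comparing the $n$-th Fourier coefficients of $f|R_{\chi}(L)|T_p$ and of $\chi(p)\,f|T_p|R_{\chi}(L)$ through the weight-$2$ Hecke formula and the nebentypus from (a), the two agree term by term, and the homology identity again follows by duality.

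The hard part is (f), where Fourier bookkeeping no longer suffices and one must invoke the newform theory of Atkin--Li. The plan is to reduce to $\psi$ primitive of conductor a power of $q$ (the case $\psi=1$ being vacuous) and to let $g$ be the newform determined by $a_p(g)=\psi(p)a_p(f)$ for all primes $p\neq q$; Atkin--Li guarantee that such a $g$ exists, with character $\ep\psi^2$ and level $QL$ for a suitable power $Q$ of $q$. Since $g$ and $f\otimes\psi$ share all $T_p$-eigenvalues for $p\neq q$ and both satisfy $a_1=1$, multiplicativity forces $a_n(g)=a_n(f\otimes\psi)$ for every $n$ coprime to $q$; and for $q\mid n$ one has $a_n(f\otimes\psi)=\psi(n)a_n(f)=0$, so $f\otimes\psi$ is obtained from $g$ by deleting exactly the Fourier coefficients supported on multiples of $Q$. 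A direct computation from the definitions identifies $U_QB_Q$ (up to the normalisation of the slash action) with the projection $g\mapsto\sum_{Q\mid n}a_n(g)q^n$ onto those coefficients, so this deletion is precisely $g\mapsto g-g|U_Q|B_Q$, as claimed. The genuine difficulty is pinning down the correct power $Q$ and verifying that the $q$-part of $g$ is captured exactly by $U_QB_Q$: this amounts to controlling the local behaviour of $g$ at $q$ in terms of the conductors of $\ep$ and $\psi$ and the local type of $f$ at $q$, which is exactly the content of the Atkin--Li twisting formulas.
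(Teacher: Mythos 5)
Your modular-forms computations are essentially correct, and it is worth saying up front that they go well beyond what the paper does: the paper's entire proof of Lemma \ref{longlemma2} is a citation to Atkin--Li \cite{atkinli} (with \cite[Proposition 3.1]{atkinli} named for (a)) plus the remark that the matrix identities transfer to modular symbols ``with a complex conjugation bar to account for the change from left-action to right-action''. Your derivations of (b), (c), (e) from the coefficient formula $c_n=\sum_{u}\overline{\chi}(u)e^{2\pi i nu/L}$ and orthogonality, and of (d) on forms from $c_{np}=\chi(p)c_n$ combined with the nebentypus from (a), are sound (note that $p\nmid LN$ does give $p\nmid\widetilde{N}$, as you need). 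On (f) you actually overestimate the residual difficulty: once you know $a_n(f\otimes\psi)=0$ for all $n$ with $q\mid n$ and $a_n(f\otimes\psi)=a_n(g)$ for $q\nmid n$, the choice $Q=q$ always works, since $g|U_q|B_q=\sum_{q\mid n}a_n(g)q^n$ with the determinant-normalised slash (a normalisation point you correctly flag); no control of the local type of $g$ at $q$ is needed to pin down $Q$. The only genuine Atkin--Li input in (f) is the existence of the newform $g$ of level $QL$ and nebentypus $\ep\psi^2$ with $a_p(g)=\psi(p)a_p(f)$ for all $p\neq q$, which you invoke correctly.

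The genuine gap is in the homology halves of (a) and (d), which you propose to deduce ``by duality''. The duality machinery of the paper cannot carry this: the pairing (\ref{pairing}) and the isomorphism (\ref{pairingiso}) are defined only on $H_{\CC}(\Gamma)=H_1(X_\Gamma,\R)^+\otimes\CC$, the $+1$-eigenspace of $J$, and Lemma \ref{Bdtranslationlemma} is stated and proved only under the hypothesis $\chi(-1)=1$. For odd $\chi$ the operator $R_{\chi}(L)$ maps the $+$-eigenspace into the $-$-eigenspace (Lemma \ref{operatortranslation} gives $JR_{\chi}(L)J=\chi(-1)R_{\chi}(L)$), so $R_{\chi}(L)(\mu)$ does not even lie in the domain of the pairing; yet statement (a) concerns arbitrary $\chi$ and arbitrary $\mu\in H_1(N,\ep)$, and the paper genuinely needs the odd case, since the twist orbit spaces $O(\mu)$ range over all $\chi\in\mathcal{D}_{Kh}$. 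Moreover, even in the even case, chasing your argument through the paper's dictionary (the identification of $H_1(N,\ep)^+$ with $S_2(N,\overline{\ep})$ together with $\gamma_{f|R_{\overline{\chi}}}=R_{\chi}(\gamma_f)$) lands $R_{\chi}(L)(\mu)$ in $H_1(\widetilde{N},\ep\chi^2)$ rather than the asserted $H_1(\widetilde{N},\ep\overline{\chi}^2)$: the conjugation in the statement does not arise from Petersson adjoints. The intended route, and the one the paper's proof sketches, is to rerun the matrix computation directly on the left action on paths: for $\sigma_d\in\Gamma_0(\widetilde{N})$ with lower-right entry $d$ one has $\sigma_d\widetilde{T}_L^{u}=\widetilde{T}_L^{ud^{-2}}\sigma'$ with $\sigma'$ again in $\Gamma_0(\widetilde{N})$ and the same diamond class, so commuting $\sigma_d$ past $R_{\chi}(L)(\mu)=\sum_u\overline{\chi}(u)\widetilde{T}_L^{u}(\mu)$ forces the substitution $u\mapsto ud^{-2}$ and yields $\langle d\rangle R_{\chi}(L)(\mu)=\ep(d)\overline{\chi}^{2}(d)R_{\chi}(L)(\mu)$ --- this is exactly the ``conjugation bar from switching left and right actions'', it requires no evenness hypothesis, and it applies to all of $H_1(N,\ep)$. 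The same repair is needed for your homology claim in (d), where the scalar likewise acquires a bar on the modular-symbols side.
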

\begin{proof}
These are standard properties first proved by Atkin and Li \cite{atkinli}. The proof of (a) for modular forms is a matrix computation (see \cite[Proposition 3.1]{atkinli}) and thus similarly true for modular symbols, with a complex conjugation bar to account for the change from left-action to right-action. 
\end{proof}
Parts (a) and (d) tell us exactly how $R_{\chi}(L)$ acts on the set of Hecke eigenspaces. We extend part (f) to non-prime power conductors.

\begin{corollary}
\label{projcor}
Suppose that $f\in S_2(\Gamma_0(MK^2)\cap \Gamma_1(K),\CC)$ is a newform at a level dividing $N$ with $a_1(f)=1$, and $\chi\in \mathcal{D}_K$. Then $f\otimes \chi=k-k'$, where $k$ is the newform at a level $L$ dividing $N$ with $a_1(k)=1$, and 
\[
k'\in \bigoplus_{\substack{q\mid K \\ q>1 \text{ prime}}}\mathrm{Im}B_q^{N/(N/q)}.
\]
In particular, $\mathrm{pr}_{N/L}(f|R_{\chi})=g(\overline{\chi})k$.
\end{corollary}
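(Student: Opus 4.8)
The plan is to reduce the general character $\chi\in\mathcal D_K$ to the prime-power case already settled in Lemma \ref{longlemma2}(f) by factoring $\chi$ over the primes dividing $K$ and twisting one prime at a time. Write $K=\prod_{i=1}^r q_i^{e_i}$ and correspondingly $\chi=\prod_{i=1}^r\chi_i$, where each $\chi_i\in\mathcal D_K$ has conductor a power of $q_i$. Since twisting acts coefficient-wise, $a_n(f\otimes\chi)=\chi(n)a_n(f)$, we have $f\otimes\chi=(\cdots(f\otimes\chi_1)\otimes\cdots)\otimes\chi_r$; at the level of operators this is the factorisation of $R_\chi$ coming from the coprime product formula Lemma \ref{longlemma2}(b) together with the identity $f|R_\chi=g(\overline{\chi})\,f\otimes\chi$ of Lemma \ref{longlemma2}(c). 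The first and main part of the proof is then an induction on the number $r$ of primes.

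For the inductive step, suppose $f\otimes(\chi_1\cdots\chi_{i-1})=g_{i-1}-c_{i-1}$, with $g_{i-1}$ a newform of level $L_{i-1}\mid N$ normalised so that $a_1(g_{i-1})=1$, and $c_{i-1}\in\bigoplus_{j<i}\mathrm{Im}(B_{q_j}^{N/(N/q_j)})$. Twisting by $\chi_i$ splits this as $(g_{i-1}\otimes\chi_i)-(c_{i-1}\otimes\chi_i)$. To the first term I apply Lemma \ref{longlemma2}(f) with $q=q_i$ whenever $q_i\mid L_{i-1}$, obtaining $g_{i-1}\otimes\chi_i=g_i-g_i|U_{Q_i}|B_{Q_i}$ with $g_i$ the next newform (again $a_1(g_i)=1$) and $Q_i$ a power of $q_i$; since $Q_i=q_i^{\,t}$ with $t\ge1$ the correction factors through $B_{q_i}$, placing it in $\mathrm{Im}(B_{q_i}^{N/(N/q_i)})$, and when $q_i\nmid L_{i-1}$ the twist is itself a newform and there is no correction. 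For the second term I use that twisting by a character of conductor prime to $q_j$ commutes with $B_{q_j}$ up to the scalar $\chi_i(q_j)$ (a direct computation on $q$-expansions, equivalently the coprime compatibility of Lemma \ref{longlemma2}(b)), so $c_{i-1}\otimes\chi_i$ stays in $\bigoplus_{j<i}\mathrm{Im}(B_{q_j}^{N/(N/q_j)})$. Setting $k=g_r$, a newform of level $L:=L_r\mid N$ with $a_1(k)=1$, and $k'=c_r$, yields the decomposition $f\otimes\chi=k-k'$ with $k'\in\bigoplus_{q\mid K,\,q>1}\mathrm{Im}(B_q^{N/(N/q)})$. Throughout, the levels divide $N$ and the nebentypus is $\ep\chi^2$, which I regard as mechanical bookkeeping.

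For the final assertion I combine this with Lemma \ref{longlemma2}(c) to write $f|R_\chi=g(\overline{\chi})(f\otimes\chi)=g(\overline{\chi})(k-k')$. As $k$ is a newform of level $L$ it lies in $B_1^{N/L}(S_2(L,\ep'))$, so $\mathrm{pr}_{N/L}(k)=k$, and it remains to prove $\mathrm{pr}_{N/L}(k')=0$. Here I would pass through the Petersson adjunction between the degeneracy maps $B_d$ and the traces $\mathrm{Tr}_d$ recorded in Lemma \ref{Bdtranslationlemma}, together with the duality (\ref{prequivalence}) between $\mathrm{pr}_{N/L}$ and $\pi_{N/L}$, reducing the vanishing to a statement about the $\mathrm{Tr}$-operators applied to the level-$L$ forms.

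The genuine obstacle is exactly this last step: showing that the accumulated degeneracy corrections contribute nothing to the level-$L$ newform component. It is delicate because the two degeneracy images $B_1$ and $B_q$ of a common lower level are \emph{not} Petersson-orthogonal in general, so one cannot simply invoke orthogonality of $\mathrm{Im}(B_q)$ with $B_1^{N/L}(S_2(L,\ep'))$. The argument must instead exploit the specific shape of $k'$ as an image of $U_{Q}|B_{Q}$ produced by Lemma \ref{longlemma2}(f), matching the normalisations there against the adjunction of Lemma \ref{Bdtranslationlemma}; isolating precisely which feature of this shape forces $\mathrm{pr}_{N/L}(k')$ to vanish is where I expect the real work to lie.
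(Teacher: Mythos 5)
Your first two paragraphs reproduce the paper's own proof, with the bookkeeping made explicit: the paper factors $\chi=\prod_{p\mid K}\chi_p$ into characters of prime-power conductor, writes $R_{\chi}=\lambda\prod_{p\mid K}R_{\chi_p}$ with $\lambda\in\overline{\Q}$ by Lemma \ref{longlemma2} (b), and then ``repeatedly applies Lemma \ref{longlemma2} (f)''; the single identity it records along the way, $B_qR_{\chi}=\bigl(\sum_{u}\chi(u)\widetilde{T}_{\mathrm{cond}(\chi)}^{qu}\bigr)B_q$, is precisely your observation that twisting by a character of conductor prime to $q$ passes through $B_q$, so that the accumulated corrections remain inside $\bigoplus_{q\mid K}\mathrm{Im}\,B_q^{N/(N/q)}$. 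So for the decomposition $f\otimes\chi=k-k'$ your induction is correct and is the same route as the paper's.

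For the ``In particular'', the obstacle you flag at the end is genuine, and you should know that the paper's proof does not close it either: it stops after the iteration and treats the projection statement as immediate. Your suspicion about non-orthogonality is in fact justified. Comparing coefficients shows $a_n(k')=a_n(k)$ for $\gcd(n,K)>1$ and $a_n(k')=0$ otherwise, and a Rankin--Selberg residue computation then gives $\langle k',k\rangle$ proportional to $\mathrm{Res}_{s=2}\sum_{\gcd(n,K)>1}|a_n(k)|^2n^{-s}$, which is strictly positive whenever $k'\neq 0$ (similarly, $\langle k|B_q,k\rangle$ is essentially proportional to $\overline{a_q(k)}$, the classical oldform Gram matrix). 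What \emph{does} follow cheaply --- and is what Proposition \ref{basisprop} and Algorithm \ref{algorithm1} actually need --- is that $\mathrm{pr}_{N/L}(f|R_{\chi})$ is a scalar multiple of $k$: since $f\otimes\chi$ and $k$ share $T_p$-eigenvalues for all $p\nmid N$ and $a_1(k')=0$, multiplicity one places $k'$ in $\mathrm{span}\{k|B_e:1<e\mid N/L\}$, while the eigencomponent of $B_1^{N/L}(S_2(L,\ep'))$ for this eigensystem is the line $\CC k$. The exact identity $\mathrm{pr}_{N/L}(k')=0$ holds if one reads $\mathrm{pr}_{N/L}$ as the projection along the oldform decomposition $\bigoplus_h\bigoplus_d\CC\, h|B_d$ (killing the components $k|B_e$, $e>1$), whereas with the literal Petersson-orthogonal definition the output is $\bigl(1-\langle k',k\rangle/\langle k,k\rangle\bigr)g(\overline{\chi})k$. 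So the ``real work'' you anticipated is not hidden in the paper; what is missing from your write-up (and from the paper's proof) is the short multiplicity-one reduction above, together with either this reading of $\mathrm{pr}_{N/L}$ or an acknowledgement that the formula holds only up to an explicit nonzero scalar.
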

\begin{proof}
First, we note that 
\[
B_qR_{\chi}=\left(\sum_{u\in \Z/\mathrm{cond}(\ep)\Z}\chi(u)\widetilde{T}_{\mathrm{cond}(\chi)}^{qu}\right)B_q.
\]
We now write $\chi=\prod_{p\mid K}\chi_p$ as a product of characters of prime-power conductor. Then $R_{\chi}=\lambda \prod_{p\mid K}R_{\chi_p}$ for some $\lambda\in \overline{\Q}$ by Lemma \ref{longlemma2} (b).  We now repeatedly apply Lemma \ref{longlemma2} (f) to each $R_{\chi_p}$, keeping in mind the displayed equation above.
\end{proof}

Given $f$ and $\chi$, we note that a $q$-expansion for $k$ can be computed explicitly from its Hecke eigenvalues using Lemma \ref{longlemma2} (d). For $R_{\chi}$ and $B_d$ it is well-known how they act on $q$-expansions: $f|B_d(q)=f(q^d)$ and $f|R_{\chi}=g(\overline{\chi})f\otimes \chi$, where $g(\overline{\chi})$ is the Gauss sum of $\overline{\chi}$. The above corollary allows us to determine how $\mathrm{pr}_{N/L}$ acts on the $q$-expansions of twists of newforms.

For $Q\mid N$ with $\mathrm{gcd}(Q,N/Q)=1$, we defined the Atkin--Lehner matrices $W_Q(x,y,z,w)$ in Example \ref{atkinlehnerex}. Define the corresponding operators on cusp forms and homology by $w_Q(x,y,z,w)$. For simplicity of exposition, denote by $w_Q$ the Atkin--Lehner operator $w_Q(x,y,z,w)$, where $x\equiv 1 \mod N/Q$ and $y\equiv 1 \mod Q$.  We study how these interact with the Hecke operators, using
classical properties due to Atkin and Li \cite{atkinli}. 
\begin{lemma}
\label{longlemma1}
Consider an integer $N$, a divisor $Q\mid N$ such that $\mathrm{gcd}(Q,N/Q)=1$, and $\ep\in \mathcal{D}_N$. Write $\ep=\ep_Q\ep_{N/Q}$, where $\ep_Q\in \mathcal{D}_Q$ and $\ep_{N/Q}\in \mathcal{D}_{N/Q}$. We consider $\mu\in H_1(N,\ep)$ and $f\in S_2(N,\ep)$. Then
\begin{itemize}
    \item[(a)] Consider $Q'$ such that $Q'\mid N$ and $\mathrm{gcd}(Q',NQ)=1$, and write $\ep_{N/Q}=\ep_{Q'}\ep_{N/(QQ')}$. Then $w_Qw_{Q'}(\mu)=\ep_{Q'}(Q)w_{QQ'}(\mu)$.
    \item[(b)] $w_Q(x,y,z,w)(\mu)=\ep_Q(y)\ep_{N/Q}(x)w_Q(\mu)$,
    \item[(c)]  $w_Q(x,y,z,w)$ maps $H_1(N,\ep_Q\ep_{N/Q})$ to $H_1(N,\overline{\ep_Q}\ep_{N/Q})$,
    \item[(d)] $T_pw_Q(\mu)=\overline{\ep}_Q(p)w_QT_p(\mu)$ when $\mathrm{gcd}(p,Q)=1$,
    
\end{itemize}
\end{lemma}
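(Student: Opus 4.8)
The plan is to treat (a)--(d) as the homology versions of classical Atkin--Li identities \cite{atkinli} for the slash-action on cusp forms, proving them by repeating the underlying matrix computations with the path-action (\ref{paths}) in place of the slash-action. The organising remark is that every operator in sight -- the $w_Q(x,y,z,w)$, the Hecke operators $T_p$ and $U_p$, and the diamond operators $\langle d\rangle$ whose simultaneous eigenspaces cut out the $H_1(N,\ep)$ -- is a $\CC$-linear combination of matrix operators $[U]$. Each assertion therefore reduces to a matrix identity of the form $U_1U_2=\sigma\langle d\rangle U_3$ with $\sigma\in\Gamma_0(N)$, valid modulo $\Gamma_0(N)$ and scalars; these are precisely the identities of Atkin and Li, and they remain valid once $[U]$ is read as $\gamma\mapsto U\circ\gamma$ on paths rather than $f\mapsto f|U$ on functions. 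Since $\mu\in H_1(N,\ep)$ is a diamond eigenvector, any $\langle d\rangle$ manufactured by such an identity contributes the scalar $\ep(d)=\ep_Q(d)\ep_{N/Q}(d)$, and this is what produces the stated characters.

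Concretely, for (b) the two representatives $W_Q(x,y,z,w)$ and the normalised $w_Q$ differ, modulo $\Gamma_0(N)$, by a single diamond, which on $\mu$ evaluates to $\ep_Q(y)\ep_{N/Q}(x)$. For (c) one conjugates: $W_Q^{-1}\langle d\rangle W_Q$ is again a diamond modulo $\Gamma_0(N)$, but one that is inverted on the $Q$-part, turning $\ep_Q\ep_{N/Q}$ into $\overline{\ep_Q}\ep_{N/Q}$ and so placing $w_Q(\mu)$ in $H_1(N,\overline{\ep_Q}\ep_{N/Q})$. For (a) the product $W_QW_{Q'}$ equals $W_{QQ'}$ times an element of $\Gamma_0(N)$ and a diamond whose value on $\mu$ is $\ep_{Q'}(Q)$. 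For (d) commuting $T_p$ -- or $U_p$ when $p\mid N$ -- past $W_Q$ recombines the cross-terms into a diamond at $Q$ evaluated at $p$, giving the factor $\overline{\ep}_Q(p)$. In each case the scalar is read off by specialising the matrix identity and applying $\langle d\rangle\mu=\ep(d)\mu$.

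The main delicacy is the correct placement of the complex conjugations. Because the path-action is a left-action while the slash-action is a right-action -- equivalently, because by Lemma \ref{transformationlemma}(a) the homology operators are the transposes of the form operators with respect to the bilinear pairing (\ref{pairing}) -- every character-valued scalar is the one from the cusp-form computation with its $Q$-part character inverted. The subtlety is that this inversion is invisible in (c), where the character operation $\ep_Q\mapsto\overline{\ep_Q}$ is already an involution, but visible in (d), where it turns $\ep_Q(p)$ into $\overline{\ep}_Q(p)$. I expect the bookkeeping of these bars, rather than any single matrix computation, to be the real content; a reliable cross-check is to transpose the known cusp-form relations through the pairing directly, using that transposition inverts every diamond.
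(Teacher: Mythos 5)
Your proposal is exactly the paper's argument: the paper's proof consists of the single remark that these are Atkin--Li matrix identities, hence hold verbatim for the path-action on modular symbols, ``with appropriate complex conjugation bars to account for the change from left-action to right-action.'' Your write-up simply fleshes out that one-liner (the diamond bookkeeping via $\langle d\rangle\mu=\ep(d)\mu$ and the transposition through the pairing of Lemma \ref{transformationlemma}), and your placement of the conjugation bars -- invisible in (c), visible in (d) -- is consistent with the statement as given.
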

\begin{proof}
These are analogues of standard results of Atkin and Li \cite{atkinli} for the corresponding action on modular forms. These are all based on matrix identities, and thus hold true on modular symbols for the same reason (and with appropriate complex conjugation bars to account for the change from left-action to right-action). 
\end{proof}
Parts (b), (c) and (d) determine how the Atkin--Lehner operators act on the set of Hecke eigenspaces of $H_1(X_{\Gamma_0(MK^2)\cap \Gamma_1(K)},\CC)$.

\section{Algorithms}
\subsection{Twist orbit spaces} \label{twistorbitsection}
We consider again $N=KM$, $h\mid 24$ such that $h^2\mid M$, and the congruence subgroup $\Gamma=\Gamma_G$, where $G$ has level $N$ and $G_M=B_0(M)$.

A crucial step in determining $q$-expansions for a basis of modular forms for $\Gamma_0(N)$ is to determine first the Hecke eigenforms. When $N$ is prime, the Hecke algebra $\mathbb{T}_{\Gamma_0(N)}$ is generated by the Atkin-Lehner operator $w_N$ and the Hecke operators $T_p$ for primes $p$. The Hecke eigenforms thus generate 1-dimensional modules of the full Hecke algebra. 

For $\Gamma_0(MK^2)\cap \Gamma_1(K)$, however, this is not the case. Recall that $\widetilde{T}_h$ normalises $\Gamma_0(M)\cap \Gamma(K)$. Hence $\widetilde{T}_{Kh}=\gamma_K^{-1}\widetilde{T}_h\gamma_K$ normalises $\Gamma_0(MK^2)\cap \Gamma_1(K)$, making the twist operators $R_{\chi}(Kh)$, for $\chi\in \mathcal{D}_{Kh}$, are all elements of the Hecke algebra. These twist operators do not act on the 1-dimensional spaces generated by the Hecke eigenforms, however. In this section, we define the notion of \emph{twist orbit spaces}, and we show that these are modules for the following subalgebra.
\begin{definition}
We define the \emph{explicit Hecke algebra} $\T_{M,K}$ to be the sub-algebra of $\T_{\Gamma_0(MK^2)\cap \Gamma_1(K)}$ generated by the $T_p$-operators for $p\nmid MK$ prime, the diamond operators $\langle d\rangle$ for $d\in (\Z/K\Z)^{\times}$, the Atkin--Lehner operators $w_m(x,y,z,w)$ for $m\mid MK^2$ such that $\mathrm{gcd}(MK^2/m,m)=1$, and the twist operators $R_{\chi}(Kh)$ for $\chi\in \mathcal{D}_{Kh}$. 
\end{definition}

\begin{remark}
It is not unreasonable to expect that $\T_{M,K}$ is the full Hecke algebra, as we have added to the Hecke operators all matrices that visibly normalise $\Gamma_0(MK^2)\cap \Gamma_1(K)$, c.f. Lemma \ref{normaliserlemma} and Corollary \ref{gencor}.
\end{remark}

In the situation of our interest, some of the Atkin--Lehner operators coincide.
\begin{lemma}
\label{atkinlehnercommutes}
If $Q=mK^2$ for some $m\mid M$ with $\mathrm{gcd}(m,M/m)=1$, then 
\[
-W_Q(x,-y,-z,w)\cdot W_Q(x,y,z,w)^{-1}\in \Gamma_0(MK^2)\cap \Gamma_1(K). 
\]
In other words,  $w_Q(x,y,z,w)\in \T_{M,K}$ satisfies $Jw_Q(x,y,z,w)J=w_Q(x,y,z,w)$. 
\end{lemma}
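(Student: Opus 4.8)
The plan is to reduce the operator identity to the displayed coset membership and then verify the latter by one explicit matrix computation. Write $\Gamma := \Gamma_0(MK^2)\cap\Gamma_1(K)$, so that $Q = mK^2$ divides the level $MK^2$ with $\mathrm{gcd}(Q, MK^2/Q) = \mathrm{gcd}(mK^2, M/m) = 1$, and the Atkin--Lehner matrix is $W_Q(x,y,z,w) = \begin{pmatrix} Qx & y \\ MK^2 z & Qw\end{pmatrix}$ of determinant $Q$. Conjugating the underlying matrix by $J$ flips the signs of the two off-diagonal entries, so
\[
J\,W_Q(x,y,z,w)\,J = \begin{pmatrix} Qx & -y \\ -MK^2 z & Qw\end{pmatrix} = W_Q(x,-y,-z,w).
\]
Hence $Jw_Q(x,y,z,w)J = [W_Q(x,-y,-z,w)]$ as operators, and this equals $w_Q(x,y,z,w) = [W_Q(x,y,z,w)]$ precisely when $W_Q(x,-y,-z,w)$ and $W_Q(x,y,z,w)$ differ by a left factor in $\Gamma$ up to the central matrix $-I$ (which acts trivially on weight-$2$ forms). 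That is exactly the membership $-W_Q(x,-y,-z,w)\,W_Q(x,y,z,w)^{-1}\in\Gamma$ to be proved.

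To establish it, I would compute the product directly, using $W_Q(x,y,z,w)^{-1} = \frac{1}{Q}\begin{pmatrix} Qw & -y \\ -MK^2 z & Qx\end{pmatrix}$ and the determinant relation $Q^2 xw - MK^2 yz = Q$ (equivalently $mK^2 xw - \frac{M}{m}yz = 1$). The outcome is
\[
-W_Q(x,-y,-z,w)\,W_Q(x,y,z,w)^{-1} = \begin{pmatrix} -1 - 2\frac{M}{m}yz & 2xy \\ 2MK^2 wz & -1 - 2\frac{M}{m}yz \end{pmatrix}.
\]
This matrix has determinant $1$ and integral entries (its diagonal is an integer because $m\mid M$), and its lower-left entry $2MK^2 wz$ is divisible by $MK^2$, so it already lies in $\Gamma_0(MK^2)$. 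What remains is the $\Gamma_1(K)$ condition.

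The one substantive point, and the crux of the argument, is checking that the diagonal entries are $\equiv 1\pmod{K}$. Reducing $mK^2 xw - \frac{M}{m}yz = 1$ modulo $K$ annihilates the term divisible by $K^2$ and leaves $\frac{M}{m}yz \equiv -1\pmod{K}$; substituting this gives $-1 - 2\frac{M}{m}yz \equiv -1 + 2 = 1\pmod{K}$, while $2MK^2 wz\equiv 0\pmod{K}$ is immediate. Hence the matrix also lies in $\Gamma_1(K)$, completing the membership and thus the operator identity. I expect this congruence to be the only non-routine step; it is moreover what forces the global sign $-$ in the statement, since the unsigned product has diagonal $\equiv -1\pmod{K}$ and so fails to lie in $\Gamma_1(K)$ as soon as $K > 2$.
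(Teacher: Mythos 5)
Your proof is correct and takes essentially the same approach as the paper, whose entire proof reads ``Simply multiply the matrices''; your computation carries out exactly that multiplication, with the determinant relation $mK^2xw - \tfrac{M}{m}yz = 1$ reduced mod $K$ giving the $\Gamma_1(K)$ congruence, and the reduction of the operator identity to the coset membership (using that $-I$ and left factors in $\Gamma_0(MK^2)\cap\Gamma_1(K)$ act trivially) matching the paper's intent. Your closing remark that the global sign is forced because the unsigned product has diagonal $\equiv -1 \pmod K$ is a correct and worthwhile observation not spelled out in the paper.
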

\begin{proof} Simply multiply the matrices.\end{proof}

Define $T:=\begin{pmatrix} 1 & 1 \\ 0 & 1\end{pmatrix}$ and $S:=\begin{pmatrix} 0 & 1 \\ -1& 0\end{pmatrix}$. In their studies of modular forms, Banwait and Cremona \cite{banwait} and Zywina \cite{zywina} made crucial use of the fact that $\mathrm{SL}_2(\Z)$ is generated by $T$ and $S$, which satisfy $\gamma_K^{-1}T\gamma_K=\widetilde{T}_K$ and $\gamma_K^{-1}S\gamma_K=W_{K^2}$. A similar statement can be obtained for our ``parent group'' $\Gamma_0(M)$. 
\begin{lemma}
Suppose that $K>1$ satisfies $\mathrm{gcd}(K,M)=1$. Then $\Gamma_0(M)$ can be generated by $T$ and a finite set of matrices of the form 
\[
\begin{pmatrix} K a & b \\  M c & K d\end{pmatrix} \text{ with } a,b,c,d\in \Z.
\]
\end{lemma}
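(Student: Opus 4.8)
The plan is to prove the stronger statement that $\Gamma_0(M)$ is generated by $T$ together with the \emph{entire} (infinite) set $\mathcal S$ of matrices of the displayed form, and to cut this down to a finite set at the very end. Write $\rho_K\colon \Gamma_0(M)\to \SL_2(\Z/K\Z)$ for reduction modulo $K$. Since $\gcd(K,M)=1$, the map $\SL_2(\Z)\to \SL_2(\Z/KM\Z)=\SL_2(\Z/K\Z)\times \SL_2(\Z/M\Z)$ is surjective, and choosing lifts that are $\equiv I \bmod M$ (hence in $\Gamma_0(M)$) shows $\rho_K$ is surjective. The structural observation that makes everything work is that $\mathcal S$ is a \emph{congruence}-defined set:
\[
\mathcal S=\{g\in\Gamma_0(M)\ :\ \rho_K(g)\text{ is anti-diagonal}\}.
\]
Indeed, a diagonal entry of $g\in\Gamma_0(M)$ is divisible by $K$ exactly when it vanishes mod $K$, while the lower-left entry is automatically divisible by $M$; so "$g$ has the displayed form" is equivalent to "$g\in\Gamma_0(M)$ and $\rho_K(g)$ is anti-diagonal."

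First I would verify that $H:=\langle T,\mathcal S\rangle$ already surjects onto $\SL_2(\Z/K\Z)$. We have $\rho_K(T)=\bar T=\begin{pmatrix}1&1\\0&1\end{pmatrix}$, and by surjectivity of $\rho_K$ there is some $s_0\in\mathcal S$ with $\rho_K(s_0)=\bar S=\begin{pmatrix}0&1\\-1&0\end{pmatrix}$, which is anti-diagonal and hence a legitimate reduction of a special matrix. Since $\SL_2(\Z)=\langle S,T\rangle$ reduces onto $\SL_2(\Z/K\Z)$, we obtain $\rho_K(H)\supseteq\langle \bar S,\bar T\rangle=\SL_2(\Z/K\Z)$.

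The step that at first looks hardest — forcing the level-$K$ kernel $\Gamma_0(M)\cap\Gamma(K)$ into $H$ — is dissolved by a single multiplication trick, and this is really the key point. I claim every $g\in\Gamma_0(M)$ with $\rho_K(g)$ \emph{diagonal} already lies in $H$: for such $g$ the element $s_0^{-1}g\in\Gamma_0(M)$ satisfies $\rho_K(s_0^{-1}g)=\bar S^{-1}\rho_K(g)$, which is (anti-diagonal)$\cdot$(diagonal)$=$anti-diagonal, so $s_0^{-1}g\in\mathcal S\subseteq H$ by the characterization above, whence $g=s_0\,(s_0^{-1}g)\in H$. (In particular $-I$, whose reduction is diagonal, needs no separate treatment.) For a general $g\in\Gamma_0(M)$, surjectivity of $\rho_K|_H$ gives $h\in H$ with $\rho_K(h)=\rho_K(g)$, so $\rho_K(h^{-1}g)=I$ is diagonal; the claim yields $h^{-1}g\in H$ and therefore $g\in H$. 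Hence $H=\Gamma_0(M)$.

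Finally I would extract a finite generating set. The kernel $\Gamma_0(M)\cap\Gamma(K)=\ker(\rho_K|_{\Gamma_0(M)})$ has finite index in $\SL_2(\Z)$ and is therefore finitely generated, say by $k_1,\dots,k_m$. Each $k_j$ has diagonal reduction $\rho_K(k_j)=I$, so $k_j=s_0\,(s_0^{-1}k_j)$ with $s_0^{-1}k_j\in\mathcal S$. One then checks directly that $\langle T,\,s_0,\,s_0^{-1}k_1,\dots,s_0^{-1}k_m\rangle$ surjects onto $\SL_2(\Z/K\Z)$ (via $T,s_0$) and contains $\ker\rho_K$ (via the $k_j$), hence equals $\Gamma_0(M)$. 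Thus $F=\{s_0,\ s_0^{-1}k_1,\dots,\ s_0^{-1}k_m\}$ is a finite set of matrices of the required form. The only genuine inputs are the surjectivity of $\rho_K$ (from $\gcd(K,M)=1$), the identity $\SL_2(\Z)=\langle S,T\rangle$, and finite generation of $\Gamma_0(M)$; everything else rests on the observation that "special form" is precisely the mod-$K$ anti-diagonal condition inside $\Gamma_0(M)$, which is what makes the left-multiplication by $s_0$ produce new special matrices.
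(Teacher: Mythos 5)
Your proof is correct, but it takes a genuinely different route from the paper's. The paper works directly with a given finite generating set of $\Gamma_0(M)$: it adjoins $\gamma_0=\begin{pmatrix}1&0\\M&1\end{pmatrix}$, first multiplies each generator by a suitable power of $\gamma_0$ to make its lower-left entry coprime to $K$ (using $\gcd(a,cM,K)=1$, which follows from the determinant), and then sandwiches each generator as $T^k\gamma T^{k'}$, choosing $k,k'$ via the invertibility of $cM$ modulo $K$ so that the diagonal entries become divisible by $K$. Your argument instead recasts the special shape as a congruence condition --- $g\in\Gamma_0(M)$ is of the displayed form exactly when its reduction mod $K$ is anti-diagonal --- and then works in the quotient $\SL_2(\Z/K\Z)$: one lift $s_0$ of $\bar S$, the closure property (anti-diagonal)$\cdot$(diagonal)$=$(anti-diagonal), surjectivity of reduction, and finite generation of the finite-index subgroup $\Gamma_0(M)\cap\Gamma(K)$ (Reidemeister--Schreier) to extract a finite set. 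All the steps check out, including the reduction of the general case to the diagonal case and the final verification that $\langle T,s_0,s_0^{-1}k_1,\dots,s_0^{-1}k_m\rangle$ contains the kernel and surjects onto $\SL_2(\Z/K\Z)$. The trade-off is constructiveness: the paper's elementary-moves proof transforms any explicitly known generating set into special generators of the same size, and --- importantly for what follows --- its flexibility is reused verbatim in Corollary \ref{gencor}, where the exponents $k,k'$ are additionally chosen to be multiples of $m$ so that $\mu W_m(a,b,c,d)\nu$ conjugates to an Atkin--Lehner matrix $W_{mK^2}(x,y,z,w)$; your structural proof, while cleaner and shorter in spirit, produces generators of the opaque form $s_0^{-1}k_j$ (and presupposes generators of $\Gamma_0(M)\cap\Gamma(K)$, which are heavier to compute in practice), so it would not directly support that later refinement.
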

\begin{proof}
Let $T,\gamma_1,\ldots,\gamma_{n}$ be a set of generators for $\Gamma_0(M)$. We add $\gamma_0:=\begin{pmatrix} 1 & 0 \\ M & 1\end{pmatrix}$ to this set. We note that 
\[
\gamma_0^k\begin{pmatrix} a & b \\ cM & d\end{pmatrix} = \begin{pmatrix} a & * \\ (ka+c)M&*\end{pmatrix}.
\]
Consider $\gamma=\begin{pmatrix}a & b \\ cM & d \end{pmatrix}\in \Gamma_0(M)$. Then $\mathrm{gcd}(\det(\gamma),K)=1$ implies that $\mathrm{gcd}(a,cM,K)=1$. Therefore, given the finite set $S$ of primes dividing $K$, we can choose $k\in \Z$ such that $(ka+c)M$ is not divisible by any prime in $S$. So after replacing each $\gamma_i$ for $i>0$ by $\gamma_0^{k_i}\gamma_i$ for some $k_i\in \Z$, we may assume that the bottom left entry of each $\gamma_i$ ($i\in \{0,\ldots,n\}$) is coprime to $K$.


Next,  let $\gamma=\begin{pmatrix} a & b \\ c M & d\end{pmatrix}$ be one of the $\gamma_i$. We note that
\[
T^k\gamma T^{k'}=\begin{pmatrix} a+kcM & * \\ c M & d+k'cM\end{pmatrix}.
\]
As $cM$ is invertible mod $K$, we can find $k,k'\in \Z$ such that $kcM\equiv -a \mod K$ and $k'cM\equiv -d \mod K$. In particular, $T^k\gamma T^{k'}$ has its top left and bottom right coefficient divisible by $K$. We thus replace $\gamma$ by $T^k\gamma T^{k'}$. Doing this for each $\gamma_i$, we obtain a suitable set of generators..
\end{proof}
\begin{corollary}
\label{gencor}
The group $\Gamma_0(M)$ can be generated by $T$ and elements $\gamma_1,\ldots,\gamma_n$ such that $K\gamma_K^{-1}\gamma_i\gamma_K$ is an Atkin--Lehner matrix $W_{K^2}(x,y,z,w)$. Moreover, if $m$ is a prime power dividing $M$ maximally, and $W_m(a,b,c,d)$ is a corresponding Atkin--Lehner matrix at level $M$, then there exist $\mu,\nu \in \Gamma_0(M)$ such that $K\gamma_K^{-1}\mu W_{m}(a,b,c,d)\nu\gamma_K = W_{mK^2}(x,y,z,w)$ for some choice of $x,y,z,w$.  
\end{corollary}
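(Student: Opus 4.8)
The first statement is immediate from the preceding lemma together with a one-line matrix computation, so I would dispatch it first. Writing a generator in the stated shape as $\gamma_i=\begin{pmatrix} Ka & b \\ Mc & Kd\end{pmatrix}\in\Gamma_0(M)$, conjugating and scaling gives $K\gamma_K^{-1}\gamma_i\gamma_K=\begin{pmatrix} K^2a & b \\ MK^2c & K^2d\end{pmatrix}$, which is exactly the shape $W_{K^2}(a,b,c,d)$ of an Atkin--Lehner matrix for the divisor $K^2$ of the level $MK^2$. The determinant relation required of $W_{K^2}(a,b,c,d)$ is $K^2ad-Mbc=1$, and this is nothing but $\det(\gamma_i)=1$, which holds because $\gamma_i\in\Gamma_0(M)\subset\mathrm{SL}_2(\Z)$.

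For the second statement the plan is to turn the claim into a congruence bookkeeping problem. Set $\Omega:=\mu W_m(a,b,c,d)\nu$ with $\mu,\nu\in\Gamma_0(M)$ still to be chosen, so that $\det(\Omega)=m$. Writing $\Omega=\begin{pmatrix}\alpha & \beta \\ \gamma & \delta\end{pmatrix}$, the same conjugation-and-scaling computation yields $K\gamma_K^{-1}\Omega\gamma_K=\begin{pmatrix} K\alpha & \beta \\ K^2\gamma & K\delta\end{pmatrix}$. Comparing this with $W_{mK^2}(x,y,z,w)=\begin{pmatrix} mK^2x & y \\ MK^2z & mK^2w\end{pmatrix}$, I see that $K\gamma_K^{-1}\Omega\gamma_K$ is such a matrix precisely when $mK\mid\alpha$, $mK\mid\delta$ and $M\mid\gamma$; the required determinant equation is then automatic, since $\det(K\gamma_K^{-1}\Omega\gamma_K)=K^2m=mK^2$. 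So the whole task is to choose $\mu,\nu$ forcing these three divisibilities.

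Two of them come for free. Since $\mu$, $W_m(a,b,c,d)$ and $\nu$ are all upper-triangular modulo $M$, so is $\Omega$, giving $M\mid\gamma$. Reducing modulo $m$, all of $ma$, $md$ and $Mc$ vanish (the last because $m\mid M$, as $m$ divides $M$ maximally as a prime power), so $W_m(a,b,c,d)\equiv\begin{pmatrix} 0 & b \\ 0 & 0\end{pmatrix}\pmod m$; as $\mu$ and $\nu$ are upper-triangular mod $m$, both diagonal entries of $\Omega$ vanish mod $m$, i.e. $m\mid\alpha$ and $m\mid\delta$ for \emph{any} choice of $\mu,\nu$. The only genuine work is to arrange $K\mid\alpha$ and $K\mid\delta$, i.e. to make $\Omega$ anti-diagonal modulo $K$; because $\gcd(m,K)=1$ this upgrades the divisibilities to $mK\mid\alpha$ and $mK\mid\delta$. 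Here I would take $\nu=I$ and exploit that the reduction $\Gamma_0(M)\to\mathrm{SL}_2(\Z/K\Z)$ is surjective (a consequence of $\gcd(M,K)=1$ and of $\mathrm{SL}_2(\Z)\twoheadrightarrow\mathrm{SL}_2(\Z/MK\Z)$). The matrix $\overline{W}_m$ is invertible mod $K$ with $\det\overline{W}_m=\overline m\in(\Z/K\Z)^{\times}$; choosing the anti-diagonal $D=\begin{pmatrix} 0 & 1 \\ -m & 0\end{pmatrix}$, which has $\det D=\overline m$, and setting $\overline\mu:=D\,\overline{W}_m^{-1}$ produces $\overline\mu\in\mathrm{SL}_2(\Z/K\Z)$ with $\overline\mu\,\overline{W}_m=D$ anti-diagonal. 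Lifting $\overline\mu$ to some $\mu\in\Gamma_0(M)$ then gives $\Omega=\mu W_m(a,b,c,d)$ meeting all three conditions, and reading off $x,y,z,w$ from its entries finishes the argument.

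The one conceptual point — and the only place an obstruction could hide — is the simultaneous control of $\Omega$ at the three pairwise coprime moduli $m$, $K$ and $M$. What makes the argument go through is precisely that the $m$- and $M$-conditions are \emph{forced} by the triangular/anti-diagonal shapes of $\Gamma_0(M)$- and Atkin--Lehner matrices, leaving a single $\mathrm{SL}_2(\Z/K\Z)$-adjustment to perform; the surjectivity of $\Gamma_0(M)\to\mathrm{SL}_2(\Z/K\Z)$ supplies exactly the freedom needed, and no compatibility among the three moduli is required because they are coprime. I therefore expect the main obstacle to be organisational rather than deep: getting the bookkeeping of the congruences straight, and verifying that $\nu=I$ already suffices so that only $\mu$ must be engineered.
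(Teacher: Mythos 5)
Your proof is correct, and for the second assertion it takes a genuinely different route from the paper's. The first assertion is handled the same way in both: the generators of the preceding lemma conjugate and scale directly to $W_{K^2}(a,b,c,d)$, with the determinant condition being exactly $\det(\gamma_i)=1$. For the second assertion, the paper re-runs the \emph{constructive} argument from the proof of the preceding lemma: it multiplies $W_m(a,b,c,d)$ on both sides by suitable powers of $T$ (and of $\begin{pmatrix}1&0\\M&1\end{pmatrix}$, to make the lower-left entry coprime to $K$) so as to force the diagonal entries into $Km\Z$, taking the $T$-powers to be multiples of $m$ via the Chinese Remainder Theorem. You instead isolate what is automatic from what must be engineered: since every element of $\Gamma_0(M)$ is upper-triangular mod $M$ and $W_m(a,b,c,d)\equiv\begin{pmatrix}0&b\\0&0\end{pmatrix}\pmod m$ (using $m\mid M$), the divisibilities $m\mid\alpha$, $m\mid\delta$ and $M\mid\gamma$ hold for \emph{every} choice of $\mu,\nu\in\Gamma_0(M)$, so only the mod-$K$ condition remains; you solve it with $\nu=I$ by lifting $\overline{\mu}=D\,\overline{W}_m^{-1}\in\mathrm{SL}_2(\Z/K\Z)$ (with $D$ anti-diagonal of determinant $m$) through the surjection $\Gamma_0(M)\twoheadrightarrow\mathrm{SL}_2(\Z/K\Z)$, which is indeed valid since $\gcd(M,K)=1$ and $\mathrm{SL}_2(\Z)\twoheadrightarrow\mathrm{SL}_2(\Z/MK\Z)$ (lift $\overline{\mu}$ to a matrix congruent to $I$ mod $M$). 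The paper's version buys economy, since it reuses verbatim the mechanism of the lemma and needs no surjectivity input; yours buys transparency, since it shows $\nu=I$ suffices and makes clear that no compatibility among the three coprime moduli is needed. Your analysis even reveals that the paper's CRT precaution (taking the $T$-powers divisible by $m$) is redundant: the relevant diagonal entries change by multiples of the lower-left entry, which is always divisible by $M$, hence by $m$, so $m$-divisibility of the diagonal is preserved under arbitrary $T$-power multiplications.
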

\begin{proof}
For the first statement, let $\gamma=\begin{pmatrix} K a & b \\ M c & K d\end{pmatrix}$ be one of the generators $\gamma_i$ from the previous lemma. Then $K\gamma_K^{-1}\gamma\gamma_K=W_{K^2}(a,b,c,d)$ at level $K^2M$. 
Next, we consider $W_m(a,b,c,d)=\begin{pmatrix} m a & b \\ M   c & m d\end{pmatrix}$. As $\det(W_m(a,b,c,d))$ is coprime to $K$, the proof of the previous lemma allows us to find $\mu,\nu\in \Gamma_0(M)$ such that $\mu W_m(a,b,c,d)\nu $ is of the form $\begin{pmatrix} K m x & y \\  M z & K m w\end{pmatrix}$. (When choosing the powers $k,k'$ of $T$ to multiply with, these need to be chosen to be multiples of $m$, which is possible due to the Chinese Remainder Theorem.) Now conjugating with $\gamma_K$ and multiplying with $K$ gives us $W_{mK^2}(x,y,z,w)$, as desired. 
\end{proof}
\begin{corollary}
\label{generatorcor}
Suppose that $\gamma\in \mathrm{GL}_2^+(\Q)$ normalises $\Gamma_0(M)$. Then $[\gamma_{K}^{-1}\gamma\gamma_K]\in \T_{M,K}$. 
\end{corollary}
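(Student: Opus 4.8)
The plan is to reduce the statement to understanding how $\gamma_K$-conjugation acts on a generating set of the normaliser $\mathcal{N}_{\Gamma_0(M)}$, and then to recognise each conjugated generator as a product of the operators appearing in the definition of $\T_{M,K}$. After normalising $\gamma$ to have determinant $1$ over $\R$, Lemma \ref{normaliserlemma} expresses $\gamma$ as a finite product of elements of $\Gamma_0(M)$, Atkin--Lehner matrices $W_m(x,y,z,w)$ with $m\mid M$ and $\mathrm{gcd}(M/m,m)=1$, and the matrix $\widetilde{T}_h$. Since the weight-$2$ slash action is a right action, the assignment $\delta\mapsto[\gamma_K^{-1}\delta\gamma_K]$ is multiplicative up to nonzero scalars, and $\T_{M,K}$ is a $\CC$-subalgebra of $\T_{\Gamma_0(MK^2)\cap\Gamma_1(K)}$; it therefore suffices to verify that the conjugate of each generator lands in $\T_{M,K}$.

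For the elements of $\Gamma_0(M)$ I would invoke Corollary \ref{gencor}, which supplies generators $T,\gamma_1,\dots,\gamma_n$ with $K\gamma_K^{-1}\gamma_i\gamma_K=W_{K^2}(x,y,z,w)$; accounting for the scalar $\lambda^{-2}$ that the slash operator attaches to a rescaling $U\mapsto\lambda U$, this conjugate equals $K^2 w_{K^2}(x,y,z,w)$, an Atkin--Lehner operator in the generating set of $\T_{M,K}$ since $\mathrm{gcd}(M,K^2)=1$. For a level-$M$ Atkin--Lehner matrix $W_m(x,y,z,w)$ I would use the second half of Corollary \ref{gencor}, which yields $\mu,\nu\in\Gamma_0(M)$ with $K\gamma_K^{-1}\mu W_m\nu\gamma_K=W_{mK^2}(x',y',z',w')$; as $\mathrm{gcd}(M/m,mK^2)=1$ this is again a legitimate Atkin--Lehner operator in $\T_{M,K}$, and since the conjugates of $\mu,\nu$ have already been handled, multiplicativity recovers $[\gamma_K^{-1}W_m\gamma_K]\in\T_{M,K}$.

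The delicate generators are $T$ and $\widetilde{T}_h$, whose $\gamma_K$-conjugates are translations rather than Atkin--Lehner matrices. A direct computation gives $\gamma_K^{-1}\widetilde{T}_h\gamma_K=\widetilde{T}_{Kh}$ and $\gamma_K^{-1}T\gamma_K=\widetilde{T}_K$, and specialising Lemma \ref{longlemma2}(e) to $a=1$, $L=Kh$ expresses the first as a twist-operator combination,
\[
[\widetilde{T}_{Kh}]=\frac{1}{\phi(Kh)}\sum_{\ep\in\mathcal{D}_{Kh}}R_{\ep}(Kh)\in\T_{M,K}.
\]
The main obstacle is $\widetilde{T}_K$: the only twist operators in the generating set of $\T_{M,K}$ sit at level $Kh$, whereas conjugating $T$ produces a level-$K$ translation. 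I expect the crux to be observing the identity $\widetilde{T}_K=\widetilde{T}_{Kh}^{\,h}$, so that $[\widetilde{T}_K]=[\widetilde{T}_{Kh}]^{h}\in\T_{M,K}$; combined with careful tracking of the slash-operator scalars, this closes the last case, and assembling all generators completes the proof.
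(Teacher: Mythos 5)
Your proposal is correct and takes essentially the same route as the paper: reduce via Lemma \ref{normaliserlemma} to the generators of $\mathcal{N}_{\Gamma_0(M)}$, dispatch the $\Gamma_0(M)$-generators and the Atkin--Lehner matrices through the two halves of Corollary \ref{gencor}, and handle the translations through Lemma \ref{longlemma2}(e). The paper's one-line proof leaves the conjugate $\widetilde{T}_K=\gamma_K^{-1}T\gamma_K$ of the generator $T$ implicit, and your identity $[\widetilde{T}_K]=[\widetilde{T}_{Kh}]^{h}$ — valid because $\widetilde{T}_{Kh}$ normalises $\Gamma_0(MK^2)\cap\Gamma_1(K)$, so these single-coset operators compose as group elements — is precisely the right way to fill in that omitted step (and neatly sidesteps the fact that (e) cannot be applied directly with $a=h$, since $\gcd(h,Kh)\neq 1$ when $h>1$).
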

\begin{proof}
By Lemma \ref{normaliserlemma}, we may suppose $\gamma$ is a generator of $\Gamma_0(M)$, an Atkin--Lehner matrix, or $\widetilde{T}_{h}$. The result thus follows from the previous corollary and Lemma \ref{longlemma2} (e). 
\end{proof}
%
The next step is to describe a decomposition of $H_1(X_{\Gamma_0(MK^2)\cap \Gamma_1(K)},\CC)$ into sub-$\T_{M,K}$-modules. 

Each Hecke eigenspace $H\subset H_1(X_{\Gamma_0(MK^2)\cap \Gamma_1(K)},\CC)$ splits as a direct sum $H=H^+\oplus H^-$, where $H^+$ is the $+1$-subspace for $J$, and $H^-$ is the $-1$-subspace. We call $H^+$ and $H^-$ \emph{Hecke-$J$-eigenspaces}. Given a Hecke-$J$-eigenvector $\mu\in H_1(X_{\Gamma_0(MK^2)\cap \Gamma_1(K)},\CC)$, denote by $H(\mu)$ its Hecke-$J$-eigenspace. This space is 1-dimensional when $\mu$ is new, but not when $\mu$ is old. 

\begin{definition}
Suppose that $\mu\in H_1(X_{\Gamma_0(MK^2)\cap \Gamma_1(K)},\CC)$ is a Hecke-$J$-eigenvector. We define the \emph{twist orbit space of $\mu$} to be
\[
O(\mu):=\bigplus_{\chi\in \mathcal{D}_{Kh}}H(R_{\chi}(\mu)).
\]
Note that $R_{\chi}(\mu)$ is indeed still a Hecke-$J$-eigenvector by Lemmas \ref{longlemma2} (d) and \ref{operatortranslation}. The subspace $O(\mu)^+$ on which $J$ acts with $+1$-eigenvalues is called the \emph{real twist orbit space of $\mu$}. 
\end{definition}
By Lemma \ref{operatortranslation}, if $J\mu=\mu$, we have
\[
O(\mu)^+=\bigplus_{\chi \in \mathcal{D}_{Kh} \text{ even}}H(R_{\chi}(\mu)).
\]
\begin{proposition}
\label{twistorbitprop}
 Suppose that $\mu\in H_1(X_{\Gamma_0(MK^2)\cap \Gamma_1(K)},\CC)$ is a Hecke-$J$-eigenvector. Then $O(\mu)$ is a representation of $\T_{M,K}$. 
\end{proposition}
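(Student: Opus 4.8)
The plan is to use that $\T_{M,K}$ is generated by the four families listed in its definition, so it suffices to show that each generator maps $O(\mu)$ into itself. Since $O(\mu)=\sum_{\chi\in\mathcal{D}_{Kh}}H(R_\chi(\mu))$ is a sum of Hecke-$J$-eigenspaces, the cleanest route is to prove that every generator sends each individual summand $H(R_\chi(\mu))$ into another summand $H(R_{\chi'}(\mu))$ with $\chi'\in\mathcal{D}_{Kh}$. The unifying principle is that each generator acts on the finite set of Hecke-$J$-eigenspaces by twisting the underlying eigensystem by a Dirichlet character which happens to lie in $\mathcal{D}_{Kh}$, and simultaneously alters the $J$-eigenvalue by exactly the parity of that character; these two facts together pin down the correct target summand.

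First I would record, using Lemma \ref{longlemma2}(a) and (d), that $R_\chi(\mu)$ lies in $H_1(\widetilde{N},\ep\overline\chi^2)$ and carries $T_p$-eigenvalue $\overline\chi(p)a_p$ for $p\nmid MK$, where $\ep\in\mathcal{D}_K$ and $(a_p)$ are the nebentypus and Hecke eigenvalues of $\mu$. The two diagonal families are then immediate: for $p\nmid MK$ the prime $p$ is coprime to the level $MK^2$, so $T_p$ acts as a scalar on the Hecke eigenspace $H(R_\chi(\mu))$, and $\langle d\rangle$ (for $d\in(\Z/K\Z)^\times$) acts as the scalar $(\ep\overline\chi^2)(d)$; both therefore fix every summand. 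For a twist operator $R_\psi(Kh)$, Lemma \ref{longlemma2}(d) shows that $R_\psi$ intertwines $T_p$ with $\psi(p)T_p$, so it carries the eigensystem of $R_\chi(\mu)$ to that of $R_{\psi\chi}(\mu)$; Lemma \ref{operatortranslation} gives $JR_\psi(Kh)=\psi(-1)R_\psi(Kh)J$, which multiplies the $J$-eigenvalue by $\psi(-1)$ --- exactly the parity difference between $H(R_\chi(\mu))$ and $H(R_{\psi\chi}(\mu))$. Since $\psi\chi\in\mathcal{D}_{Kh}$, this places $R_\psi(H(R_\chi(\mu)))$ inside the summand $H(R_{\psi\chi}(\mu))$.

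The Atkin--Lehner operators are the main obstacle. Here I would invoke Lemma \ref{longlemma1}(b)--(d): writing $\ep'=\ep\overline\chi^2$ for the nebentypus of $R_\chi(\mu)$ and $\ep'=\ep'_m\ep'_{N/m}$, the operator $w_m(x,y,z,w)$ sends $H_1(\widetilde{N},\ep')$ into $H_1(\widetilde{N},\overline{\ep'_m}\ep'_{N/m})$ and twists the $T_p$-eigenvalue by $\overline{\ep'_m}(p)$ for $p\nmid m$, so that $w_m(R_\chi(\mu))$ acquires exactly the eigensystem of $R_{\ep'_m\chi}(\mu)$. The decisive point is that $\ep'_m$ again lies in $\mathcal{D}_{Kh}$: because $\ep\in\mathcal{D}_K$, $\gcd(M,K)=1$ and $h\mid M$, the nebentypus $\ep'$ has conductor dividing $Kh$, hence so does any of its unitary components $\ep'_m$; thus $\chi'\colonequals\ep'_m\chi\in\mathcal{D}_{Kh}$ and $H(R_{\chi'}(\mu))$ is a genuine summand of $O(\mu)$. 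What remains, and is the crux of the argument, is to match the $J$-eigenvalue: one must check that the commutation of $w_m$ with $J$ --- governed by Lemma \ref{atkinlehnercommutes} and the matrix identity behind Lemma \ref{longlemma1}(b), together with the fact that weight $2$ forces every nebentypus to be even --- changes the $J$-eigenvalue by precisely the parity $\ep'_m(-1)$ that separates $H(R_\chi(\mu))$ from $H(R_{\chi'}(\mu))$. I expect this simultaneous bookkeeping of the eigensystem-twist (staying in $\mathcal{D}_{Kh}$) and of the $J$-parity to be the delicate part, requiring the Atkin--Li relations of Lemmas \ref{longlemma1} and \ref{longlemma2} with their complex-conjugation corrections; once it is settled for the prime-power Atkin--Lehner operators, the general case follows by composing them via Lemma \ref{longlemma1}(a). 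Assembling the four cases, each generator maps every summand $H(R_\chi(\mu))$ into a summand, and hence preserves their span $O(\mu)$, which is the claim.
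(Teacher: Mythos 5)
Your proposal follows the paper's own proof in structure and in every routine step: like the paper, you reduce to tracking how the generators of $\T_{M,K}$ permute the set of Hecke-$J$-eigenspaces, dispose of the $T_p$ and diamond operators as scalars, and handle the twist operators $R_\psi(Kh)$ via Lemma \ref{longlemma2}(d) together with $JR_\psi(Kh)J=\psi(-1)R_\psi(Kh)$ from Lemma \ref{operatortranslation}, noting that $\psi\chi\in\mathcal{D}_{Kh}$ keeps the image inside $O(\mu)$. All of that is correct and coincides with the paper's argument (the paper phrases the twist case as: $R_\psi(Kh)$ moves a Hecke-$J$-eigenspace to the same one as $R_\psi$ does, which preserves $O(\mu)$ by definition).

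The genuine gap is the one you flag yourself: the $J$-parity matching for the Atkin--Lehner operators is stated as an expectation (``I expect this \ldots to be the delicate part'') rather than proved, and it is the only non-routine point of the proposition. The identity you need, $Jw_Q(x,y,z,w)J=\ep'_Q(-1)\,w_Q(x,y,z,w)$ on $H_1(\widetilde{N},\ep')$, is in fact one line from Lemma \ref{longlemma1}(b): conjugating the matrix $W_Q(x,y,z,w)$ by $J$ yields $W_Q(x,-y,-z,w)$, and part (b) gives $w_Q(x,-y,-z,w)(\nu)=\ep'_Q(-y)\ep'_{N/Q}(x)\,w_Q(\nu)=\ep'_Q(-1)\,w_Q(x,y,z,w)(\nu)$, which is exactly the parity separating $H(R_\chi(\mu))$ from $H(R_{\ep'_Q\chi}(\mu))$. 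The paper short-circuits even this: it identifies the eigensystem twist as $R_{\ep'}$ for the nebentypus $\ep'$ itself, which in weight $2$ satisfies $\ep'(-1)=1$ on any nonzero eigenvector, so by Lemma \ref{atkinlehnercommutes} the $J$-eigenvalue simply does not move and $H(\nu)$ lands in $H(R_{\ep'}(\nu))$ with no parity bookkeeping at all. Your finer component-level analysis via $\ep'_Q$ is actually the more careful route for mixed $Q=m_1K_1^2$ with $K_1$ a proper divisor of $K$, where $\ep'_Q$ can be odd and Lemma \ref{atkinlehnercommutes} does not apply --- but precisely there you must prove the displayed commutation identity rather than expect it; once you add that one line, your argument closes.
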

\begin{proof}

Since $O(\mu)$ is a direct sum of Hecke-$J$-eigenspaces, it suffices to determine how the twist and Atkin--Lehner operators act on the set of Hecke-$J$-eigenspaces. For the Atkin--Lehner operators $w_{Q}(x,y,z,w)$ we use Lemmas \ref{longlemma1} (c), (d) and \ref{longlemma2} (d) to see that  the Hecke eigenspace of $\mu$ is mapped to the Hecke eigenspace of $R_{\ep}(\mu)$, where $\ep$ is the Nebentypus character of $\mu$. Furthermore, Atkin--Lehner operators commute with $J$ by Lemma \ref{atkinlehnercommutes}, whereas $JR_{\ep}J=\ep(-1)R_{\ep}$. Note, however, that $\ep(-1)=1$ unless $\mu=0$, so indeed $H(\mu)$ is mapped to $H(R_{\ep}(\mu))$. 

Finally, for $\chi\in \mathcal{D}_{Kh}$, the operator $R_{\chi}(Kh)$ maps a Hecke-$J$-eigenspace to the same Hecke-$J$-eigenspace as $R_{\chi}$ does, and hence preserves $O(\mu)$ by definition. 
%
%
%
%
\end{proof}
\begin{proposition}
\label{basisprop}
Suppose that $f\in S_2(\Gamma_0(MK^2)\cap \Gamma_1(K),\CC)$ is a newform at a level dividing $N$ and $\mu=\gamma_f$. Then a basis for $O(\mu)^+$ is given by the elements
\[
\mathrm{Tr}_e^{N/L}\pi_{N/L}R_{\chi}(\mu),
\]
where $\chi \in \mathcal{D}_{Kh}$ is even, $R_{\chi}(\mu)$ has new level $L\mid N$, and  $e$ divides $N/L$. 
\end{proposition}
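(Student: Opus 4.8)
The plan is to break $O(\mu)^+$ into its constituent Hecke-$J$-eigenspaces, produce a basis of each, and then recognise the proposed elements as the standard degeneracy images of newforms by transporting the computation through the pairing isomorphism (\ref{pairingiso}). Since $\mu=\gamma_f$ lies in $H_{\CC}(\Gamma)$, we automatically have $J\mu=\mu$, so the displayed identity following the definition of $O(\mu)$ gives
\[
O(\mu)^+=\bigplus_{\chi\in\mathcal{D}_{Kh}\text{ even}}H(R_\chi(\mu)).
\]
It therefore suffices to exhibit a basis of each summand $H(R_\chi(\mu))$ indexed by the relevant $e$, and to confirm that distinct even $\chi$ contribute summands in direct sum position (so that linear independence across summands is automatic).

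Fix an even $\chi$ and let $L\mid N$ be the new level of $R_\chi(\mu)$. By Lemma \ref{Bdtranslationlemma} one has $R_\chi(\mu)=\gamma_{f|R_{\overline{\chi}}}$, and by Lemma \ref{longlemma2}(c) $f|R_{\overline{\chi}}=g(\chi)\,f\otimes\overline{\chi}$; let $k$ be the newform of level $L$ underlying $f\otimes\overline{\chi}$, as extracted in Corollary \ref{projcor}. Under (\ref{pairingiso}) the summand $H(R_\chi(\mu))$ corresponds to the $k$-isotypic Hecke eigenspace of $S_2(\Gamma,\CC)$, which is spanned by the degeneracy images $k|B_e$ for $e\mid N/L$ and hence has dimension $\sigma_0(N/L)=\#\{e:e\mid N/L\}$. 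This already matches the number of proposed basis vectors attached to $\chi$, so it only remains to show these vectors realise that basis.

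The core computation identifies $\mathrm{Tr}_e^{N/L}\pi_{N/L}R_\chi(\mu)$ with a nonzero multiple of $\gamma_{k|B_e}$. By Corollary \ref{projcor}, $\mathrm{pr}_{N/L}(f|R_{\overline{\chi}})=g(\chi)\,k$, a \emph{nonzero} multiple of the newform since Gauss sums do not vanish; then (\ref{prequivalence}) yields $\pi_{N/L}R_\chi(\mu)=\gamma_{\mathrm{pr}_{N/L}(f|R_{\overline{\chi}})}=\overline{g(\chi)}\,\gamma_k$. Applying the trace--degeneracy duality of Lemma \ref{Bdtranslationlemma}, which identifies $\mathrm{Tr}_e^{N/L}$ on homology with $B_e$ on cusp forms up to the nonzero factor $[\Gamma_0(N):\Gamma_0(L)]$, each $\mathrm{Tr}_e^{N/L}\pi_{N/L}R_\chi(\mu)$ becomes a nonzero scalar multiple of $\gamma_{k|B_e}$. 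Because $\{k|B_e:e\mid N/L\}$ is a basis of the $k$-isotypic oldspace by Atkin--Lehner theory, the classes $\{\mathrm{Tr}_e^{N/L}\pi_{N/L}R_\chi(\mu)\}_{e\mid N/L}$ form a basis of $H(R_\chi(\mu))$.

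I expect the main obstacle to be verifying that the sum over even $\chi$ is genuinely direct, equivalently that distinct even characters yield distinct Hecke-$J$-eigenspaces. By Lemma \ref{longlemma2}(d) the operator $R_\chi$ twists the $T_p$-eigenvalues by $\chi(p)$, so the eigensystems of $f\otimes\overline{\chi}$ are pairwise distinct \emph{unless} $f$ admits an inner twist by some character in $\mathcal{D}_{Kh}$; this coincidence, and the consequent risk of over-counting a repeated eigenspace, is the delicate point that must be controlled. A secondary care is the level bookkeeping in the composite $\mathrm{Tr}_e^{N/L}\pi_{N/L}$: one must read $\pi_{N/L}$ as landing in $\mathrm{Tr}_1^{N/L}(H_1(L,\varepsilon'))$ and identify this image with level-$L$ homology via the injective map $\mathrm{Tr}_1^{N/L}$ before invoking Lemma \ref{Bdtranslationlemma}, so that the index factors $[\Gamma_0(N):\Gamma_0(L)]$ appear correctly and stay nonzero.
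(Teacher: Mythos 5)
Your argument is, in substance, the paper's own proof written out in full: the paper's proof of Proposition \ref{basisprop} consists of one sentence instructing the reader to translate the statement to modular forms via (\ref{prequivalence}) and Lemma \ref{Bdtranslationlemma}, apply Corollary \ref{projcor}, and invoke the standard old/new theory, which is exactly your chain identifying each $\mathrm{Tr}_e^{N/L}\pi_{N/L}R_{\chi}(\mu)$ with a nonzero multiple of $\gamma_{k|B_e}$. Your bookkeeping is correct throughout: the sesquilinearity of (\ref{pairingiso}) giving $\pi_{N/L}R_{\chi}(\mu)=\overline{g(\chi)}\,\gamma_k$, the nonvanishing of the Gauss sum because $R_{\chi}$ is taken at conductor level, and the reading of $\pi_{N/L}$ through the injective $\mathrm{Tr}_1^{N/L}$ before applying Lemma \ref{Bdtranslationlemma}, which matches the convention implicit in Proposition \ref{translationprop}.

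The one point you flag but leave open --- directness of the sum over even $\chi$ --- is indeed needed for the statement (if two even characters gave the same Hecke-$J$-eigenspace, the listed vectors would repeat up to scalars and could not be a basis), and the paper is silent on it as well; but it closes quickly, and your diagnosis should be sharpened. A coincidence $H(R_{\chi_1}(\mu))=H(R_{\chi_2}(\mu))$ for distinct even $\chi_1,\chi_2\in \mathcal{D}_{Kh}$ forces, by strong multiplicity one, the almost-everywhere equality $a_p(f)\chi_1(p)=a_p(f)\chi_2(p)$, i.e. a \emph{self-twist} $f=f\otimes(\chi_1\overline{\chi}_2)$ by a nontrivial character. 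This is stronger than the inner twists you worry about: an inner twist $f^{\sigma}=f\otimes\psi$ with $\sigma\neq \mathrm{id}$ relates $O(\mu)$ to the separate orbit space $O(\gamma_{f^{\sigma}})$ (which is precisely why the paper later aggregates Galois conjugates into $V(f)^+$) and creates no repetition inside a single $O(\mu)$. By Ribet's theorem, a newform admitting a self-twist has CM, and the self-twist character is the odd quadratic character attached to the imaginary quadratic CM field; since $\chi_1\overline{\chi}_2$ is even and nontrivial, no coincidence can occur. Hence the even-indexed Hecke-$J$-eigenspaces are pairwise distinct, and your per-character count of $\sigma_0(N/L)$ independent vectors assembles into a basis of $O(\mu)^+$ exactly as claimed.
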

\begin{proof}
Translate this statement to modular forms using (\ref{prequivalence}) and Lemma \ref{Bdtranslationlemma}, then apply Corollary \ref{projcor} and the standard theory of old and new subspaces.
\end{proof}
We note that each real twist orbit space contains an element $\mu=\gamma_f$, where $f$ is a newform at a level dividing $N$, and thus has a basis of this form. In this case, denote by $O^+(f)$ the analogue of $O(\mu)^+$ under the isomorphism $(\ref{pairingiso})$.  We call this the \emph{real twist orbit space} of $f$. We note that each eigenform $f$ has Fourier coefficients defined over some number field. Denote by 
\[
V(f)^+\subset S_2(\Gamma_0(MK^2)\cap \Gamma_1(K),\overline{\Q})
\]
the subspace  spanned by the real twist orbit spaces $O(f^{\sigma})^+$, where $\sigma\in \mathrm{Gal}(\overline{\Q}/\Q)$.

\subsection{Computing $q$-Expansions}
We consider again the set-up from the previous section. Now assume that $JGJ=G$, $-I\in G$ and $\mathrm{det}(G)=(\Z/N\Z)^{\times}$. Consider also a group $\mathcal{A}\subset (\mathcal{N}_{\Gamma_0(M)}\cap \mathcal{N}_{G^h})/\mathrm{P}\Gamma$ such that $J\mathcal{A}J=\mathcal{A}$. Here $G^h$ is the subgroup of $G$ with determinant $1\mod h$, and $h$ is the largest divisor of 24 such that $h^2\mid M$. Then the group $F:=\langle \mathrm{P}\Gamma,\mathcal{A}\rangle$ satisfies $JFJ=F$ and $F\subset \mathcal{N}_{\Gamma_0(M)}$. (Technically, we should replace $\mathcal{A}$ by a set of lifts to $\mathcal{N}_{\Gamma_0(M)}$, but note that $F$ is independent of the choices of lifts.) Also define $V:=H_1(X_{\Gamma_0(MK^2)\cap \Gamma_1(K)},\CC)$ and decompose $V=\oplus_i O_i$  into distinct twist orbit spaces.

 By Corollary \ref{generatorcor} and Proposition \ref{twistorbitprop},  $F$ acts on each $O_i$ separately, and
$
V^{F}=\bigoplus_i O_i^{F}.
$
Now $J$ acts on each $O_i^F$ separately, and we obtain 
\[
V^{F,+}=\bigoplus_i O_i^{F,+}, \text{ where } O_i^{F,+} \subset O_i^+
\]
and the superscript $+$ denotes the $J$-invariant subspace.
This means that $V^{F,+}$ has a basis  of elements of real twist orbit spaces. In order to determine $q$-expansions for a basis of $S_2(\Gamma,\CC)^{\mathcal{A}}$, it thus suffices to be able to compute $q$-expansions for cusp forms in a fixed real twist orbit space.

  To this end, recall that we have a sesqui-linear isomorphism
\[
S_2(\Gamma_0(MK^2)\cap \Gamma_1(K),\CC)\to H_{\CC}(\Gamma_0(MK^2)\cap \Gamma_1(K)),\;\; f \mapsto \gamma_f,
\]
under which the operators correspond as follows:
\begin{align*}
T_p & \longleftrightarrow T_p^*,\;\;
R_{\chi}  \longleftrightarrow R_{\overline{\chi}} \text{ if } \chi(-1)=1,\;\;
\mathrm{pr}_{N/L}\longleftrightarrow \pi_{N/L},\;\;
[\Gamma_0(N):\Gamma_0(L)]B_d^{N/L}\longleftrightarrow \mathrm{Tr}_d^{N/L}.
\end{align*}
This means the following. 
\begin{proposition}
\label{translationprop}
Suppose that $f\in S_2(\Gamma_0(MK^2)\cap\Gamma_1(K),\CC)$ and $\mu=\gamma_f\in H_{\CC}(\Gamma_0(MK^2)\cap \Gamma_1(K))$. For each $i\in \{1,\ldots,n\}$, consider even $\chi_i\in \mathcal{D}_{Kh}$ and $a_i\in \CC$. Let $L_i\mid N$ be the new level of $f|R_{\overline{\chi}_i}$, and consider $e_i\mid N/L_i$. Then
\[
\sum_{i=1}^na_i\mathrm{Tr}_{e_i}^{N/L_i}\pi_{N/L_i}R_{\chi_i}(\mu)=\gamma_g,
\]
where
\[
g=\sum_{i=1}^n[\Gamma_0(N):\Gamma_0(L_i)]\cdot\overline{a_i}\cdot f|R_{\overline{\chi}_i}\mathrm{pr}_{N/L_i}B^{N/L_i}_{e_i}. 
\]
\end{proposition}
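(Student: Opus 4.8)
The plan is to leverage the dictionary between operators on cusp forms and operators on homology that has been assembled in the preceding lemmas, so that the statement becomes essentially a formal manipulation. The key identification is the sesqui-linear isomorphism (\ref{pairingiso}), $f\mapsto \gamma_f$, which satisfies $\gamma_{\al f}=\overline{\al}\gamma_f$. Under this map, Lemma \ref{transformationlemma} gives $\gamma_{f|T^*}=T(\gamma_f)$, and the specific translations we need are recorded just before the statement: $R_{\chi}$ corresponds to $R_{\overline{\chi}}$ when $\chi(-1)=1$, $\mathrm{pr}_{N/L}$ corresponds to $\pi_{N/L}$ via (\ref{prequivalence}), and $[\Gamma_0(N):\Gamma_0(L)]B_d^{N/L}$ corresponds to $\mathrm{Tr}_d^{N/L}$ by Lemma \ref{Bdtranslationlemma}.

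First I would treat a single term, i.e. prove the claim for $n=1$ with $a_1=1$, and then conclude the general case by $\CC$-linearity in the homology argument together with the sesqui-linearity of (\ref{pairingiso}) (which is exactly what produces the complex conjugates $\overline{a_i}$ and the characters $\overline{\chi}_i$ in $g$). For the single term, the idea is to compute $\gamma_g$ for the candidate $g$ and check it equals $\mathrm{Tr}_{e}^{N/L}\pi_{N/L}R_{\chi}(\mu)$. Concretely, I would apply the three translation rules in sequence: starting from $\gamma_f=\mu$, applying $R_{\chi}$ to $\mu$ corresponds on the form side to $f|R_{\overline{\chi}}$ (here the evenness hypothesis $\chi(-1)=1$ is used, so that $R_{\overline{\chi}}^{*}=R_{\chi}$ via $\widetilde{T}_L^{*}=\widetilde{T}_L^{-1}$); then $\pi_{N/L}$ corresponds to $\mathrm{pr}_{N/L}$; and finally $\mathrm{Tr}_{e}^{N/L}$ corresponds to $[\Gamma_0(N):\Gamma_0(L)]\,B_{e}^{N/L}$. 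Composing these and tracking the index factor $[\Gamma_0(N):\Gamma_0(L)]$ produces exactly the asserted $g$.

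The one genuine subtlety, and the step I expect to require the most care, is bookkeeping the order in which the operators appear and making sure the adjoints and the index factors land on the correct side. On the homology side the operators act in the order $\mathrm{Tr}_{e}^{N/L}\,\pi_{N/L}\,R_{\chi}$ reading right to left, whereas on the cusp form side the operators are written as right-actions $f|R_{\overline{\chi}}\,\mathrm{pr}_{N/L}\,B_{e}^{N/L}$; because the pairing-adjoint correspondence reverses composition, I must verify that the right-action ordering genuinely matches the left-action ordering after taking adjoints. The factor $[\Gamma_0(N):\Gamma_0(L)]$ attaches to $B_{e}^{N/L}$ rather than to $\mathrm{Tr}_{e}^{N/L}$ because of the asymmetry in Lemma \ref{Bdtranslationlemma} (where the index multiplies $\gamma_{g|B_d}$ but not $\gamma_{f|\mathrm{Tr}_d}$); I would double-check that this asymmetry reproduces the single index factor stated and does not introduce a spurious square. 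Once these orderings and scalars are confirmed for $n=1$, summing over $i$ and using $\CC$-linearity of the pairing together with $\gamma_{\al f}=\overline{\al}\gamma_f$ yields the displayed identity, completing the proof.
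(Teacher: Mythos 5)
Your proposal is correct and is essentially the paper's own argument: the paper states Proposition \ref{translationprop} as an immediate consequence of the displayed operator dictionary ($R_{\chi}\leftrightarrow R_{\overline{\chi}}$ for even $\chi$, $\mathrm{pr}_{N/L}\leftrightarrow\pi_{N/L}$ via (\ref{prequivalence}), and $[\Gamma_0(N):\Gamma_0(L)]B_d^{N/L}\leftrightarrow\mathrm{Tr}_d^{N/L}$ from Lemma \ref{Bdtranslationlemma}), together with the sesqui-linearity $\gamma_{\al f}=\overline{\al}\gamma_f$ producing the conjugates $\overline{a_i}$. Your term-by-term composition, reduction to $n=1$, and verification that the single index factor attaches as in Lemma \ref{Bdtranslationlemma} match this reasoning exactly.
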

From the action of $T_p$ on newforms we can determine $q$-expansions of newforms. Moreover, we know how $B_e$ and $R_{\chi}$ act on $q$-expansions, and by Corollary \ref{projcor}, we can also determine what $\mathrm{pr}_{N/L}$ does to $q$-expansions. So, given a $q$-expansion for $f$ in the above proposition, we can compute a $q$-expansion for $g$. This leads to the following algorithm.\\

%

\begin{algorithm}[For computing $q$-expansions for a basis of $S_2(\Gamma,\Q(\zeta_{Kh})^+)^{\mathcal{A}}$]
\label{algorithm1}
\emph{Input}: Coprime integers $K$ and $M$, a finite subset $S$ of $\mathrm{GL}_2^+(\Q)$ generating a subgroup $F\subset \mathcal{N}_{\Gamma_0(M)}$ normalised by $J$, and  a positive integer \emph{\texttt{prec}}. Let $h$ be the largest divisor of 24 such that $h^2\mid M$. 

\emph{Steps}:
\begin{itemize}
    \item[(1)] Find a basis for $H_1(\Gamma_0(MK^2)\cap \Gamma_1(K),\Q)$ and determine the fixed subspace \newline $H_1(\Gamma_0(MK^2)\cap \Gamma_1(K),\Q(\zeta_{Kh}))^{F}$. 
    \item[(2)] Choose a divisor $L=L_1L_2^2\mid K^2M$ with $L_1\mid M$ and $L_2\mid K$, and a newform $f \in S_2(\Gamma_0(L)\cap \Gamma_1(L_2),\overline{\Q})$. Compute the $q$-expansion for $f$ up to $q^{\texttt{\emph{prec}}+1}$.
    \item[(3)] Using Corollary \ref{projcor}, compute the $q$-expansions (up to $q^{\texttt{\emph{prec}}+1}$) for all \newline$f|R_{\overline{\chi}}\mathrm{pr}_{N/L}B^{N/L}_{e}$, where $\chi\in \mathcal{D}_{Kh}$ is even, $L\mid N$ is the new level of $f|R_{\overline{\chi}}$ and $e\mid N/L$. 
    \item[(4)] Using (finitely many) $T_p$-operators and their eigenvalues, compute\newline $\mu\in H_{\CC}(\Gamma_0(MK^2)\cap \Gamma_1(K))$ such that $\mu=\al \gamma_f$ for some $\al\in \overline{\Q}^{\times}$.
    \item[(5)] Compute the modular symbols $\mathrm{Tr}^{N/L}_{e}\pi_{N/L}R_{\chi}(\mu)$ corresponding to the modular forms computed in (3). The space $O(\mu)^+$ they generate is a real twist orbit space.
    \item[(6)] Determine a basis for the intersection $O(\mu)^+\cap H_1(\Gamma_0(MK^2)\cap \Gamma_1(K),\overline{\Q})^{F}$, as linear combinations of the modular symbols computed in Step (5).
    \item[(7)] Using Proposition \ref{translationprop} and the result from Step (3), determine the $q$-expansions of the modular forms corresponding to the basis computed in Step (6).
    \item[(8)] Return to Step (2), unless the spaces $O(\mu)^+$ considered so far span\newline $H_{\CC}(\Gamma_0(MK^2)\cap \Gamma_1(K))^F$. 
    \item[(9)] The $q$-expansions for the computed basis are defined over a number field containing $\Q(\zeta_{Kh})^+$. Take linear combinations to reduce to a basis over $\Q(\zeta_{Kh})^+$.
\end{itemize}
\emph{Output:} A finite set of $q$-expansions $a_1q+a_2q^2+\ldots+a_{\texttt{\emph{prec}}+1}q^{\texttt{\emph{prec}}+1}$ with each $a_i\in \Q(\zeta_{Kh})^+$, corresponding to a basis for $S_2(\Gamma_0(MK^2)\cap \Gamma_1(K),\Q(\zeta_{Kh})^+)^F$. When $\mathcal{A}$, $\Gamma$ and $F$ are defined as at the start of this section, this space equals $S_2(\Gamma,\Q(\zeta_{Kh})^+)^{\mathcal{A}}$. 
\end{algorithm}
\begin{proof}
The discussion at the start of this section shows that we indeed obtain a basis for the space $S_2(\Gamma_0(MK^2)\cap \Gamma_1(K),\overline{\Q})^F$. In (\ref{1formsiso}) in Section \ref{normalisersec}, we saw that a basis of $q$-expansions with coefficients in $\Q(\zeta_{Kh})$ exists, and, as $J$ normalises $F$, we can reduce further to $\Q(\zeta_{Kh})^+$.
\end{proof}

\subsection{The $\Q$-rational structure}\label{rationalstrsec}
We continue the notation from the previous  section. Assume also that $\mathcal{A}$ determines a $\mathrm{Gal}(\overline{\Q}/\Q)$-invariant automorphism \emph{group}. Then we know that a model for $X_G/\mathcal{A}$ over $\Q$ must exist. Given the modular forms in $S_2(\Gamma,\Q(\zeta_{Kh})^+)^{\mathcal{A}}$ computed using Algorithm \ref{algorithm1}, we can compute \emph{some} model for $(X_G/\mathcal{A})_{\Q(\zeta_{Kh})^+}$, but this model tends to be defined over $\Q(\zeta_{Kh})^+$ rather than over $\Q$. 

We want to compute the fixed space for the action of the remaining matrices in $G\subset \mathrm{GL}_2(\Z/N\Z)$ of determinant unequal to 1, as defined in Section \ref{sec31}. This action on cusp forms is only $\Q$-linear, as opposed to $\CC$-linear, which complicates matters: the pairing (\ref{pairing}) between modular forms and modular symbols, being defined by integration, does not descend to a pairing between two $\Q$-vector spaces.  

We first prove a Sturm bound. For a curve $X$ and a divisor $D$ on $X$, denote by $\Omega$ the sheaf of regular differential 1-forms, and by $\O_X(D)$ the sheaf such that for each open $U\subset X$, the set $\O_X(D)(U)$ consists of those functions $f$ in the function field of $X$ satisfying $\mathrm{div}(f|_U)\geq -D\cap U$. 
\begin{lemma}\label{sturmbound}
Suppose that $X$ is a curve, and $\omega \in H^0(X,\Omega^{\otimes k})$. We consider a point $x\in X$ and a uniformiser $q\in \widehat{\mathcal{O}}_x$ in the completed local ring. Write the image of $\om$ in $(\Omega^{\otimes k})_x$ as $\om_x = (a_1+a_2q+a_3q^2+\ldots)\left(\dd q\right)^{\otimes k}$. If $a_i=0$ for all $i\in \{1,\ldots,k(2g-2)+1\}$ then $\om=0$.
\end{lemma}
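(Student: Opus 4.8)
The plan is to reinterpret the coefficient hypothesis as a statement about the order of vanishing of $\om$ at $x$, and then play this off against the degree of the line bundle of which $\om$ is a section. On a smooth projective curve $X$ of genus $g$, the sheaf $\Omega$ is the canonical line bundle, of degree $2g-2$, so $\Omega^{\otimes k}$ is a line bundle of degree $k(2g-2)$. Thus $\om\in H^0(X,\Omega^{\otimes k})$ is a global section of a line bundle whose degree I know exactly, and the whole lemma will follow from the elementary fact that a nonzero global section of a line bundle of degree $d$ cannot vanish to order exceeding $d$ at any single point.

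First I would translate the coefficient hypothesis into an order-of-vanishing statement. Since $q$ is a uniformiser at the smooth point $x$, the differential $\dd q$ generates the stalk $\Omega_x$, hence $(\dd q)^{\otimes k}$ generates $(\Omega^{\otimes k})_x$; therefore $\ord_x(\om)$ equals the order of the power series $a_1+a_2q+a_3q^2+\cdots$, i.e. (shifted by one) the least index $i$ with $a_i\neq 0$. The assumption that $a_1,\dots,a_{k(2g-2)+1}$ all vanish says precisely that, if $\om\neq 0$, then $\ord_x(\om)\geq k(2g-2)+1$.

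Next I would invoke the degree bound. If $\om\neq 0$, its divisor of zeros $\div(\om)$ is an effective divisor linearly equivalent to $k$ times a canonical divisor, so $\deg\div(\om)=k(2g-2)$. Since every local order is nonnegative, the contribution of the single closed point $x$ satisfies $\deg(x)\cdot\ord_x(\om)\le \deg\div(\om)=k(2g-2)$, and as $\deg(x)\geq 1$ this forces $\ord_x(\om)\le k(2g-2)$. This contradicts the inequality from the previous step, so $\om=0$.

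I do not expect a genuine obstacle: the argument reduces to ``order of vanishing $\le$ degree of the line bundle''. The only points requiring care are bookkeeping ones — confirming that the index shift in $a_1+a_2q+\cdots$ matches $\ord_x$ correctly (so that the threshold is $k(2g-2)+1$ rather than off by one), and noting that over a non-algebraically-closed base field the residue degree $\deg(x)$ only strengthens the bound, so the stated threshold is safe for an arbitrary point $x$. One should also record the standing assumption that $X$ is smooth and projective (as it is for the modular curves in play), which is what makes $\Omega$ a line bundle of the stated degree and validates the divisor-degree identity.
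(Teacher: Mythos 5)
Your argument is correct and is essentially the same as the paper's: the paper phrases the degree obstruction by observing that $\om$ gives a global section of $\O_X(kK-(k(2g-2)+1)x)$, a sheaf of negative degree, while you phrase it equivalently via $\ord_x(\om)\le \deg\div(\om)=k(2g-2)$. The index bookkeeping (coefficients $a_1,\dots,a_{k(2g-2)+1}$ corresponding to vanishing order at least $k(2g-2)+1$) and the remark that $\deg(x)\ge 1$ only strengthens the bound are both handled correctly.
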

\begin{proof}
This is a standard argument. Let $K$ be the canonical divisor. By assumption, $\om$ yields a global section of the sheaf $\O_X(kK-(k(2g-2)+1)x)$. This sheaf has degree $k\mathrm{deg}(K)-k(2g-2)-1<0$, and therefore has no non-zero global sections. 
\end{proof}

\begin{algorithm}[For computing a model for $X_G/\mathcal{A}$ over $\Q$]
\label{algorithm2}
\emph{Input}: A finite set of matrices generating a subgroup $G\subset \mathrm{GL}_2(\Z/N\Z)$ with $\mathrm{det}(G)=(\Z/N\Z)^{\times}$, $-I\in G$ and $JGJ=G$. An integer $M$ such that $N=MK$ with $\mathrm{gcd}(M,K)=1$ and $G$ surjects onto $B_0(M)\subset \mathrm{GL}_2(\Z/M\Z)$. Let $h$ be the largest divisor of 24 such that $h^2\mid M$. A finite subset of $\mathrm{GL}_2^+(\Q)$ generating a subgroup  $\mathcal{A}\subset (\mathcal{N}_{\Gamma_0(M)}\cap \mathcal{N}_{G^h})/\mathrm{P}\Gamma_G$ normalised by $J$, and defining a $\mathrm{Gal}(\overline{\Q}/\Q)$-invariant automorphism group on $X_G$ such that $X_G/\mathcal{A}$ has genus $g\geq 2$. A positive integer \emph{\texttt{prec}}. 

\emph{Steps}:
\begin{itemize}
    \item[(1)] Define $F:=\langle \Gamma_G,\mathcal{A}\rangle$ and run Algorithm \ref{algorithm1} for $F,K,M,h$, \emph{\texttt{prec}} to determine $q$-expansions for a basis of $S_2(\Gamma_G,\Q(\zeta_{Kh})^+)^{\mathcal{A}}$, and reduce to a basis of $S_2(\Gamma_G,\Q(\zeta_{K}^+))^{\mathcal{A}}$. The dimension of this space is the genus $g$ of $X_G/\mathcal{A}$.
    \item[(2)] Find generators $A_1,\ldots,A_n,A_{n+1},\ldots,A_{n'}$ for the image $G_{K}$ of $G$ in $\mathrm{GL}_2(\Z/K\Z)$ such that $A_{n+1},\ldots,A_{n'}$ generate $G_{K}\cap \mathrm{SL}_2(\Z/K\Z)$ and $\mathrm{det}(A_i)\neq 1$ for all $i\in \{1,\ldots,n\}$.
    \item[(3)] For each $A\in \{A_1,\ldots,A_n\}$, run Steps (4)-(6). 
    \item[(4)] Write $A=B\cdot \gamma_d$, where $d=\mathrm{det}(A)$ and $\gamma_d=\begin{pmatrix} d & 0 \\ 0 & 1\end{pmatrix}$. Determine a lift $\widetilde{B}$ of $B$ to $\Gamma_0(M)\subset \mathrm{SL}_2(\Z)$. For each real twist orbit space $O(\mu)^+$ computed in Algorithm \ref{algorithm1},  $\widetilde{B}$ maps $O(\mu)^{\Gamma_G,+}$ into $O(\mu)^+$. Run steps (5)-(6) for each such real twist orbit space.
    \item[(5)] As in Algorithm \ref{algorithm1}, compute the action of $\widetilde{B}$ on  $O(\mu)^{\Gamma_G,+}$, and translate this to a $\Q(\zeta_{K})$-linear map $\widetilde{B}:\;V(f)^+\cap S_2(\Gamma_G,\Q(\zeta_{K}))\to V(f)^+$ in terms of the basis computed in Step (1).
    \item[(6)] Using a basis for $\Q(\zeta_{K})/\Q$, consider the linear map computed in (5) as a $\Q$-linear map, and postcompose with the action of $\gamma_d:\; V(f)^+\to V(f)^+$, which Galois conjugates coefficients in a $q$-expansion with $\sigma_d: \zeta_{K}\to \zeta_{K}^d$. This composition yields a map $V(f)^+\cap S_2(\Gamma_G,\Q(\zeta_{K}))\to V(f)^+\cap S_2(\Gamma_G,\Q(\zeta_{K}))$ corresponding to the action of $A$ on $q$-expansions.
    \item[(7)] For each $V(f)^+$, determine $q$-expansions of a basis for $(V(f)^+\cap S_2(\Gamma_G,\Q(\zeta_{K})))^{\mathcal{A},A_1,\ldots,A_n}$, up to the precision needed for the next step.
    \item[(8)] Use the $q$-expansions to determine equations for $X_G/\mathcal{A}$, as done by Galbraith \cite{galbraith}. Unless $X_G/\mathcal{A}$ is hyperelliptic, this is done by computing homogeneous $\Q$-rational polynomial equations satsified by the $q$-expansions, up to $q^\emph{\texttt{prec}}$. To show an equation of degree $d$ holds provably, we need $\emph{\texttt{prec}}>d(2g-2)-(d-1)$, where $g=\mathrm{dim}(S_2(\Gamma_G,\Q(\zeta_{K}))^{\mathcal{A}})$. For the details when $X_G/\mathcal{A}$ is hyperelliptic, we refer to \cite{galbraith}. 
\end{itemize}
Output: A finite number of homogeneous polynomials over $\Q$ in $g+1$ variables if $X_G/\mathcal{A}$ is non-hyperelliptic, a single polynomial in 2 variables if $X_G/\mathcal{A}$ is hyperelliptic.
\end{algorithm}
\begin{remark}\label{degreermk}
By Petri's theorem (see \cite{petri}), the canonical image of a non-hyperelliptic curve of genus $g\geq 4$ is an intersection of quadrics unless the curve is trigonal or (isomorphic to) a plane quintic. It thus often suffices to take $\texttt{prec}>4g-5$. In the trigonal and plane quintic cases, the canonical image is defined by equations of degree 4, while for genus 3 curves the canonical image is cut out by equations of degrees up to 3 (see \cite[Lemma 7.1]{zywina}). In every case, therefore, $\texttt{prec}>8g-11$ suffices. 
\end{remark}
\begin{proof}
This algorithm computes the right object because $S_2(\Gamma_G,\Q(\zeta_{K}))^{\mathcal{A},A_1,\ldots,A_n}$ is equal to $H^0(X_G/\mathcal{A},\Omega^1)$, as shown in Section \ref{normalisersec}. 

In Step (1), a basis over $\Q(\zeta_K^+)$ exists because $\mathcal{A}$ is Galois invariant. The crucial claim in Step (4) that $\widetilde{B}$ maps $O(\mu)^{\Gamma_G,+}$ into $O(\mu)^+$ requires proof. Since $A\in G_{K}$ and $JAJ\in G_{K}$, we find that $JAJA^{-1}\in G_{K}\cap \mathrm{SL}_2(\Z/K\Z)$. So $JAJ$ and $A$, though different operators in general, have identical restriction to\[ (H_1(X(\Gamma_0(MK^2)\cap \Gamma_1(K)),\R)\otimes \CC)^{\Gamma_G}.
\]
In particular, $A$ acts maps the +-eigenspace $H_{\CC}(\Gamma_0(MK^2)\cap \Gamma_1(K))^{\Gamma_G}$ into the space $H_{\CC}(\Gamma_0(MK^2)\cap \Gamma_1(K))$. Since $\gamma_d$ commutes with $J$ (even as matrices), we conclude that $\widetilde{B}$ maps  $H_{\CC}(\Gamma_0(MK^2)\cap \Gamma_1(K))^{\Gamma_G}$ into $H_{\CC}(\Gamma_0(MK^2)\cap \Gamma_1(K))$. Because $\widetilde{B}\in \Gamma_0(M)$ moreover acts on twist orbit spaces, this proves the claim. Furthermore, $\gamma_d$ acts on $V(f)^+$, since for $\sigma:\zeta_{K}\to \zeta_{K}^d$, we have
\[
(f|R_{\chi}\mathrm{pr}_LB_d)^{\sigma}=g(\overline{\chi})^{\sigma}(f^{\sigma}\otimes \chi^{\sigma})|\mathrm{pr}_LB_d.
\]
Step (5) again relies on Propositions \ref{twistorbitprop} and \ref{translationprop}. Steps (6) and (7) are linear algebra computations, and for the details of Step (8) we refer to \cite{galbraith}. We recall that $S_{2k}(\Gamma_G,\Q(\zeta_{K}))^{\mathcal{A}}=H^0(X/\mathcal{A},\Omega^{\otimes k})$. The lower bound on \texttt{prec} follows from Lemma \ref{sturmbound} by noting that the coefficient for $q^{\ell}$ in the expansion of a product of $d$ cusp forms is determined by the first $\ell-(d-1)$ coefficients of each cusp form. 
\end{proof}
\begin{remark}
If $\al\in \mathcal{N}_{G}/\mathrm{P}\Gamma_G$ commutes with each element of $\mathcal{A}$, then $\al$ determines an automorphism of $X_G/\mathcal{A}$. If $\al$ also commutes with $J$, the action of $\al$ on each real twist orbit space can be determined explicitly, which can be translated into a matrix for the action of $\al$ on the basis of cusp forms found in Step (7). From this, we obtain the automorphism on $X_G/\mathcal{A}$ determined by $\al$ explicitly.

Moreover, having determined the $q$-expansions for generators of the function field of $X_G/\mathcal{A}$, we can determine maps to other modular curves, such as the $j$-map.
\end{remark}
We note that the $q$-expansions computed in Step (7) still do not have $\Q$-rational coefficients in general, but do satisfy rational equations. Ultimately, this is because the image of the infinity cusp on (a rational model for) $X_G/\mathcal{A}$ is -- in general -- not a $\Q$-rational point, so the expansion of a regular differential form at this cusp also tends to have non-rational coefficients.

%
\section{Examples}
\label{examplesection}
\subsection{The three curves}Consider the subgroup
\begin{align}
\label{ge7}
G(\mathrm{e}7)=\left\langle \begin{pmatrix} 0 & 5 \\ 3 & 0 \end{pmatrix},\begin{pmatrix} 5 & 0 \\ 3 & 2 \end{pmatrix}\right\rangle \subset \mathrm{GL}_2(\F_7)
\end{align}
defined in \cite[Proposition 9.1]{freitas}. This is an index 2 subgroup of the normaliser $G(\mathrm{ns}7^+)$ of a non-split Cartan subgroup of $\mathrm{GL}_2(\F_7)$. Also consider  $B_0(5)\subset \mathrm{GL}_2(\F_5)$, and let $G(\mathrm{b}5,\mathrm{e}7)$ be the intersection of the inverse images of $G(\mathrm{e}7)$ and $B_0(5)$ in $\mathrm{GL}_2(\Z/35\Z)$.  We obtain a degree 2 map
\[
X(\mathrm{b}5,\mathrm{e}7):=X_{G(\mathrm{b}5,\mathrm{e}7)}\to X(\mathrm{b}5,\mathrm{ns}7):=X_{G(\mathrm{b}5,\mathrm{ns}7+)}.
\]
Every degree 2 map of curves is the quotient by an involution. Let us call this involution $\phi_7: X(\mathrm{b}5,\mathrm{e}7)\to X(\mathrm{b}5,\mathrm{e}7)$. This corresponds to the action of a matrix in $G(\mathrm{ns}7^+)\setminus G(\mathrm{e}7)$ (c.f. Example \ref{ex27}) and commutes with the Atkin--Lehner involution $w_5$ (defined in Definition \ref{aldef}, using any $\eta\in G(e7)$ of determinant $5\mod 7$). We thus obtain the following diagram of degree 2 maps between modular curves
\[
\begin{tikzcd}
                                         & {X(\mathrm{b}5,\mathrm{e}7)} \arrow[d] \arrow[ld] \arrow[rd] &                                             \\
{X(\mathrm{b}5,\mathrm{ns}7)} \arrow[rd] & {X(\mathrm{b}5,\mathrm{e}7)/\phi_7w_5} \arrow[d]             & {X(\mathrm{b}5,\mathrm{e}7)/w_5} \arrow[ld] \\
                                         & {X(\mathrm{b}5,\mathrm{ns}7)/w_5}                            &                                            
\end{tikzcd}.
\]
We note that $\phi_7$ and $w_5$ represent the two subsets of allowed automorphisms described in Remark \ref{allowedsets}. We use Algorithm \ref{algorithm2} to compute canonical models for the three modular curves in the middle row, as well as their maps to $X(\mathrm{b}5,\mathrm{ns}7)/w_5$, and a basis for their global differential forms by means of modular forms. This has been used by the author in \cite{box2} as part of the proof that all elliptic curves over quartic fields not containing $\sqrt{5}$ are modular. 

We note that the genera of $X(\mathrm{b}5,\mathrm{ns}7)$, $X(\mathrm{b}5,\mathrm{e}7)/w_5$ and $X(\mathrm{b}5,\mathrm{e}7)/\phi_7w_5$ are 6, 5 and 8 respectively, while $X(\mathrm{b}5,\mathrm{e}7)$ has genus 15 and $X(\mathrm{b}5,\mathrm{ns}7)/w_5$ is hyperelliptic of genus 2. A model for $X(\mathrm{b}5,\mathrm{ns}7)$ was first computed by Le Hung \cite{lehung} using its description as a fibred product of $X(\mathrm{b}5)$ and $X(\mathrm{e}7)$, and this was transformed into a planar model by Derickx, Najman and Siksek in \cite{derickx}. This known model serves as an extra verification for the correctness of our computations. We also compute the map $X(\mathrm{b}5,\mathrm{ns}7)\to X(\mathrm{ns}7)$. Composition with Chen's $j$-map \cite[pp.69]{chen} on $X(\mathrm{ns}7)=\P^1$ 
\begin{align}\label{chenj}
j_{\mathrm{ns}7}(x)=\frac{\left((4x^2+5x+2)(x^2+3x+4)(x^2+10x+4)(3x+1)\right)^3}{(x^3+x^2-2x-1)^7}
\end{align}
thus yields the $j$-map on $X(\mathrm{b}5,\mathrm{ns}7)$.

\begin{theorem}
\label{modelthm}
A canonical model for $X(\mathrm{b}5,\mathrm{ns}7)$ in $\P_{X_0,\ldots,X_5}^5$ is given by:
\begin{align*}X(\mathrm{b}5,\mathrm{ns}7):\; & 14X_0^2 + 12X_2X_3 - 16X_3^2 - 14X_2X_4 + 30X_3X_4 - 11X_4^2\\&\;\;\;\;\; + 28X_2X_5 - 58X_3X_5 + 40X_4X_5 - 28X_5^2=0,\\& 7X_0X_1 - 2X_2X_4 - 4X_3X_4 + 2X_4^2 + 12X_3X_5 - 7X_4X_5 + 10X_5^2=0,\\& 14X_1^2 - 4X_2X_3 + 16X_3^2 + 10X_2X_4 + 14X_3X_4 - 21X_4^2\\&\;\;\;\;\; + 4X_2X_5 - 58X_3X_5 + 64X_4X_5 - 66X_5^2=0,\\& 2X_0X_2 - 2X_0X_3 + 2X_1X_3 - 5X_0X_4 - 6X_1X_4 + 8X_0X_5 + 4X_1X_5=0,\\& 4X_1X_2 - 2X_0X_3 - 6X_1X_3 - X_0X_4 + 3X_1X_4 + 3X_0X_5 - 2X_1X_5=0,\\& 8X_2^2 - 20X_2X_3 + 16X_3^2 - 14X_2X_4 + 14X_3X_4 - 21X_4^2\\&\;\;\;\;\; + 28X_2X_5 - 42X_3X_5 + 56X_4X_5 - 28X_5^2=0.
\end{align*}  
and the Atkin--Lehner involution acts as
\[
w_5:\; (X_0:X_1:X_2:X_3:X_4:X_5)\mapsto (-X_0:-X_1:X_2:X_3:X_4:X_5).
\]
The map $X(\mathrm{b}5,\mathrm{ns}7)\to X(\mathrm{ns}7)=\P^1$ is given by
\[
(X_0:\ldots:X_5)\mapsto (7X_0 - 2X_2 + 4X_3 - X_4 - 4X_5:
    -14X_0 - 7X_1 + 6X_2 - 12X_4 + 10X_4 - 9X_5).
\]
A canonical model for $X(\mathrm{b}5,\mathrm{e}7)/\phi_7w_5$ in $\P^4_{X_0,\ldots,X_4}$ is given by

\begin{align*} X(\mathrm{b}5,\mathrm{e}7)/w_5:\;& 448X_0^2 - 9X_1^2 + 9X_2^2 + 54X_2X_3 + 9X_3^2 + 112X_0X_4 + 126X_1X_4 + 7X_4^2=0,\\& 16X_0X_1 - 3X_1^2 + 3X_2^2 + 6X_2X_3 + 3X_3^2 + 2X_1X_4 + 21X_4^2=0,\\& 3X_1X_2 + 28X_0X_3 + 12X_1X_3 + 21X_2X_4 + 14X_3X_4=0
\end{align*}
and $\phi_7$ acts as
\[
\phi_7:\; (X_0:X_1:X_2:X_3:X_4)\mapsto (X_0:X_1:-X_2:-X_3:X_4).
\]
Finally, a canonical model for $X(\mathrm{b}5,\mathrm{e}7)/\phi_7w_5$ in $\P^7_{X_0,\ldots,X_7}$ is given by
\begin{align*} & 3528X_0^2 + 177X_4X_5 + 597X_5^2 - 2716X_0X_6 - 423X_1X_6 - 6365X_2X_6 - 1918X_3X_6\\&\;\;\;\;\; - 13454X_6^2 + 2842X_0X_7 + 626X_1X_7 + 9144X_2X_7 + 3010X_3X_7 + 35252X_6X_7 - 22960X_7^2=0,\\& 56X_0X_1 - 135X_4X_5 - 327X_5^2 - 140X_0X_6 - 37X_1X_6 + 2381X_2X_6 + 1890X_3X_6\\&\;\;\;\;\; + 2982X_6^2 + 910X_0X_7 + 16X_1X_7 - 3270X_2X_7 - 2758X_3X_7 - 7476X_6X_7 + 4816X_7^2=0,\\& 56X_1^2 + 1215X_4X_5 + 2835X_5^2 + 11340X_0X_6 - 715X_1X_6 \\&\;\;\;\;\;- 22389X_2X_6 - 12726X_3X_6 - 38290X_6^2 - 19530X_0X_7 + 820X_1X_7\\&\;\;\;\;\; + 30894X_2X_7 + 21546X_3X_7 + 99764X_6X_7 - 64624X_7^2=0,\\& 168X_0X_2 - 15X_4X_5 - 3X_5^2 - 56X_0X_6 - 45X_1X_6 + 89X_2X_6 + 238X_3X_6\\&\;\;\;\;\; - 238X_6^2 + 182X_0X_7 + 52X_1X_7 - 138X_2X_7 - 406X_3X_7 + 532X_6X_7 - 224X_7^2=0,\\& 56X_1X_2 - 171X_4X_5 - 315X_5^2 + 168X_0X_6 - 253X_1X_6 + 2713X_2X_6 + 2562X_3X_6\\&\;\;\;\;\; + 2086X_6^2 + 1050X_0X_7 + 260X_1X_7 - 3910X_2X_7 - 3738X_3X_7 - 5292X_6X_7 + 3584X_7^2=0,\\& 56X_2^2 + 87X_4X_5 + 147X_5^2 - 364X_0X_6 + 141X_1X_6 - 1285X_2X_6 - 1582X_3X_6\\&\;\;\;\;\; - 714X_6^2 - 266X_0X_7 - 148X_1X_7 + 1894X_2X_7 + 2170X_3X_7 + 1764X_6X_7 - 1232X_7^2=0,\\& 1764X_0X_3 - 579X_4X_5 - 1050X_5^2 + 1505X_0X_6 - 774X_1X_6 + 7009X_2X_6 + 6083X_3X_6\\&\;\;\;\;\; + 7021X_6^2 + 1456X_0X_7 + 851X_1X_7 - 9837X_2X_7 - 8582X_3X_7 - 15778X_6X_7 + 9044X_7^2=0,\\& 28X_1X_3 + 9X_4X_5 + 6X_5^2 + 357X_0X_6 - X_1X_6 + 74X_2X_6 + 161X_3X_6\\&\;\;\;\;\; - 63X_6^2 - 364X_0X_7 - 20X_1X_7 - 144X_2X_7 - 168X_3X_7 + 210X_6X_7 - 140X_7^2=0,\\& 84X_2X_3 - 15X_4X_5 - 30X_5^2 - 35X_0X_6 - 27X_1X_6 + 80X_2X_6 + 133X_3X_6\\&\;\;\;\;\; + 329X_6^2 + 140X_0X_7 + 34X_1X_7 - 66X_2X_7 - 196X_3X_7 - 854X_6X_7 + 532X_7^2=0,\\& 252X_3^2 + 12X_4X_5 - 30X_5^2 + 70X_0X_6 + 45X_1X_6 + 443X_2X_6 + 28X_3X_6\\&\;\;\;\;\; + 350X_6^2 - 196X_0X_7 - 59X_1X_7 - 639X_2X_7 + 14X_3X_7 - 728X_6X_7 + 280X_7^2=0,\\& 18X_0X_4 - 19X_0X_5 - 12X_1X_5 - 27X_2X_5 + 17X_3X_5 - 120X_4X_6\\&\;\;\;\;\; - 66X_5X_6 + 114X_4X_7 + 54X_5X_7=0,\\& 2X_1X_4 - 21X_0X_5 + 8X_1X_5 + 7X_2X_5 - 21X_3X_5 + 28X_4X_6 + 56X_5X_6 - 28X_4X_7 - 84X_5X_7=0,\\& 6X_2X_4 + 7X_0X_5 + 3X_2X_5 + 7X_3X_5=0,\\& 9X_3X_4 - 28X_0X_5 - 3X_1X_5 - 9X_2X_5 + 8X_3X_5 - 39X_4X_6 - 30X_5X_6 + 33X_4X_7 + 27X_5X_7=0,\\& 63X_4^2 + 426X_4X_5 + 690X_5^2 + 1120X_0X_6 + 207X_1X_6 - 4405X_2X_6 - 2702X_3X_6\\&\;\;\;\;\; - 7000X_6^2 - 2464X_0X_7 - 263X_1X_7 + 6057X_2X_7 + 4298X_3X_7 + 18172X_6X_7 - 11984X_7^2=0
\end{align*}    
and the remaining involution acts as
\[
w_5=\phi_7:\; (X_0:\ldots:X_7)\mapsto (X_0:X_1:X_2:X_3:-X_4:-X_5:X_6:X_7).
\]
The quotients of each of these three curves by their respective involutions give rise to the same genus 2 hyperelliptic curve
\[
X(\mathrm{b}5,\mathrm{ns}7)/w_5:\;y^2 = x^6 + 2x^5 + 
    7x^4 - 4x^3 + 3x^2 - 10x + 5.
\]
\end{theorem}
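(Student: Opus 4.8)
I read this final statement as a catalogue of the output of Algorithm \ref{algorithm2} applied to three specific groups, so the plan is to specify the inputs, run the algorithm, and then verify the by-products (the involution actions, the map to $X(\mathrm{ns}7)$, and the common quotient). First I would fix the level decomposition. All three curves have level $N = 35 = MK$ with $M = 5$ and $K = 7$; since $5$ is squarefree we have $h = 1$, so Algorithm \ref{algorithm2} runs inside $S_2(\Gamma_0(245)\cap\Gamma_1(7),\cdot)$ and every coefficient field is a subfield of $\Q(\zeta_7)^+$. The three inputs are: (i) $G = G(\mathrm{b}5,\mathrm{ns}7^+)$ with $\mathcal{A}$ trivial, giving $X(\mathrm{b}5,\mathrm{ns}7)$ of genus $6$; (ii) $G = G(\mathrm{b}5,\mathrm{e}7)$ with $\mathcal{A} = \langle w_5\rangle$, giving the genus-$5$ quotient; and (iii) $G = G(\mathrm{b}5,\mathrm{e}7)$ with $\mathcal{A} = \langle \phi_7 w_5\rangle$, giving the genus-$8$ quotient. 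For each I would verify the standing hypotheses: $-I \in G$, $\mathrm{det}(G) = (\Z/35\Z)^{\times}$, $JGJ = G$, and that $\mathcal{A}$ is a $J$-normalised, $\mathrm{Gal}(\overline{\Q}/\Q)$-invariant subgroup of $(\mathcal{N}_{\Gamma_0(5)}\cap\mathcal{N}_{G^h})/\mathrm{P}\Gamma_G$. Here $w_5$ is an Atkin--Lehner involution (Definition \ref{aldef}) while $\phi_7$ comes from a matrix in $G(\mathrm{ns}7^+)\setminus G(\mathrm{e}7)$ as in Example \ref{ex27}; these are exactly the two kinds of allowed automorphism from Remark \ref{allowedsets}, and both commute with $J$.

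Second I would run the algorithm. Step (1) of Algorithm \ref{algorithm1} produces $H_1(\Gamma_0(245)\cap\Gamma_1(7),\Q)$ as modular symbols and extracts the $F$-fixed part; Steps (2)--(7) decompose it into real twist orbit spaces and, via Propositions \ref{twistorbitprop} and \ref{translationprop}, transport the resulting symbols to $q$-expansions of a basis of $S_2(\Gamma_G,\Q(\zeta_7)^+)^{\mathcal{A}}$. Step (7) of Algorithm \ref{algorithm2} then imposes the remaining $G_K$-matrices of nontrivial determinant to descend to a $\Q$-structure, and the dimension of the resulting space confirms the genera $6,5,8$. By Proposition \ref{canonicalmodelprop} these forms are a basis of $H^0(X,\Omega)$, so Step (8) applies Galbraith's method \cite{galbraith}: search for the homogeneous $\Q$-polynomial relations among the $q$-expansions. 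The listed models are cut out entirely by quadrics, in line with Remark \ref{degreermk}; Lemma \ref{sturmbound} converts the relevant degree bound into an explicit precision $\texttt{prec}$ beyond which a vanishing $q$-expansion forces the relation, so the quadrics are \emph{provably} correct rather than merely numerical. Checking that their common zero locus is smooth of the expected dimension and degree confirms these are genuine canonical models.

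Third I would extract the auxiliary data. Since $w_5$ and $\phi_7$ commute with $J$ (Lemma \ref{atkinlehnercommutes} for $w_5$, direct check for $\phi_7$), the remark following Algorithm \ref{algorithm2} lets me compute their action on each real twist orbit space, hence the induced linear map on the chosen basis of cusp forms; rewriting this in projective coordinates yields the displayed sign-change formulas. For the forgetful morphism $X(\mathrm{b}5,\mathrm{ns}7)\to X(\mathrm{ns}7) = \P^1$ (of type (M1)), I would express the pullback of the Hauptmodul as a ratio of two weight-$2$ forms, i.e.\ two $\Q$-linear forms in the canonical coordinates, by matching $q$-expansions; composition with Chen's $j$-function (\ref{chenj}) then gives the $j$-map. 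Finally, for each of the three curves I would pass to the $+1$-eigenspace of its involution and eliminate to obtain $y^2 = x^6 + 2x^5 + 7x^4 - 4x^3 + 3x^2 - 10x + 5$; recovering the \emph{same} genus-$2$ curve from all three quotients is a strong internal consistency check, while comparing $X(\mathrm{b}5,\mathrm{ns}7)$ with the Le Hung \cite{lehung} / Derickx--Najman--Siksek \cite{derickx} model provides an external one.

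The hard part will be computational rather than conceptual: the ambient level is $245$ and the full curve $X(\mathrm{b}5,\mathrm{e}7)$ has genus $15$, so the homology and twist-orbit computations are sizeable, and the real content is (a) carrying out Steps (4)--(7) in exact arithmetic over $\Q(\zeta_7)^+$ so that the final $q$-expansions and equations are certified, and (b) pinning down the precision via Lemma \ref{sturmbound} so that Galbraith's relations are proved and not merely observed. Once the certified $q$-expansions are in hand, every remaining claim reduces to a finite, verifiable linear-algebra or Gr\"obner-basis computation.
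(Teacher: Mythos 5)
Your proposal is correct and follows essentially the same route as the paper: the same decomposition $N=35$, $M=5$, $K=7$, $h=1$, the same three fixed-space computations inside $S_2(\Gamma_0(245)\cap\Gamma_1(7),\Q(\zeta_7))$ via Algorithms \ref{algorithm1} and \ref{algorithm2} (with the identical choices of $G$ and $\mathcal{A}$, up to viewing $X(\mathrm{b}5,\mathrm{ns}7)$ as $X_{G(\mathrm{b}5,\mathrm{ns}7^+)}$ rather than as $X_{G(\mathrm{b}5,\mathrm{e}7)}/\phi_7$, which gives the same fixed space), the same Sturm-bound precision $\texttt{prec}=2(2\cdot 8-2)=28$ for the quadric relations, the remark following Algorithm \ref{algorithm2} for the involution actions, Chen's $j$-map to produce the Hauptmodul $n_7$ for the map to $X(\mathrm{ns}7)$, and the Derickx--Najman--Siksek planar model as external verification. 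The one certification detail you gloss over, which the paper supplies, is the provable correctness of the map to $X(\mathrm{ns}7)$: both the computed map $\pi$ and the true map $\pi'$ have degree $6$, so $\pi-\pi'$ has polar degree at most $12$ and agreement of $q$-expansions to precision $13$ already forces equality.
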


Using \texttt{Magma}'s built-in \texttt{Genus6PlaneCurveModel} function, we can transform our model for $X(\mathrm{b}5,\mathrm{ns}7)$ into a planar model. As in \cite{derickx}, this planar model has four singular points, and we can make a translation such that the singular points have coordinates $(i:0:1),\;(-i:0:1),\;(0:1/\sqrt{5}:1)$ and $(0:-1/\sqrt{5}:1)$. The planar equation thus obtained is identical to the one found by  Derickx--Najman--Siksek, showing that our model is indeed isomorphic to theirs.



\subsection{Finding generators}
In \cite{freitas}, the group $G(\mathrm{e}7)$ was defined as (\ref{ge7}) inside the normaliser of a non-split Cartan subgroup. Recall that these Cartan subgroups are only defined up to conjugacy. We work in the non-split Cartan subgroup
\[
G(\mathrm{ns}7):=\left\{ \begin{pmatrix} a & 5b \\ b & a\end{pmatrix} \in \mathrm{GL}_2(\F_7) \mid a,b\in \F_7\right \}
\]
and its normaliser $G(\mathrm{ns}7^+)$ generated by $G(\mathrm{ns}7)$ and $\begin{pmatrix} 1 & 0 \\ 0 &-1\end{pmatrix}$. Note that 5 is a generator of $\F_7^{\times}$. 

In order to determine which subgroup of $G(\mathrm{ns}7^+)$ corresponds to $G(\mathrm{e}7)$, we first find a choice-independent definition.
\begin{lemma}
\label{ge7def}
Let $G$ be the normaliser of a non-split Cartan subgroup of $\mathrm{GL}_2(\F_7)$. The corresponding subgroup $G(\mathrm{e}7)\subset G$ is the unique index 2 subgroup $H\subset G$ such that
\begin{itemize}
    \item[(i)] $H$ is not cyclic and
    \item[(ii)] $H\cap \mathrm{SL}_2(\F_7)$ is cyclic of order 8.
\end{itemize}
\end{lemma}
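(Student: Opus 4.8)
The plan is to reduce everything to the abstract structure of the normaliser $G$ of a non-split Cartan subgroup and to a short finite check. Since all normalisers of non-split Cartan subgroups are conjugate in $\mathrm{GL}_2(\F_7)$, and since conditions (i) and (ii) are invariant under conjugation — conjugation is an automorphism of $\mathrm{GL}_2(\F_7)$ preserving the normal subgroup $\mathrm{SL}_2(\F_7)$, cyclicity, and subgroup orders — it suffices to work in one convenient model. I would take $C:=G(\mathrm{ns}7)$, which is cyclic of order $48$ (isomorphic to $\F_{49}^\times$), together with $w:=\begin{pmatrix} 1 & 0 \\ 0 & -1\end{pmatrix}$, so that $G=\langle c\rangle\rtimes\langle w\rangle$ where $c$ generates $C$, $w^2=I$, and a direct matrix computation gives $wcw^{-1}=c^7$, i.e. $w$ realises the Frobenius $x\mapsto x^7$ of $\F_{49}^\times$; note $7^2\equiv 1\pmod{48}$, so $w$ acts as an involution.

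The first step is to enumerate the index $2$ subgroups. One computes $[G,G]=\langle[w,c]\rangle=\langle c^7c^{-1}\rangle=\langle c^6\rangle$, which is cyclic of order $8$, whence $G^{\mathrm{ab}}\cong\Z/6\times\Z/2$, with elementary $2$-quotient $(\Z/2)^2$. Thus $G$ has exactly three subgroups of index $2$: the Cartan $C$ itself, $H_1:=\langle c^2,w\rangle$, and $H_3:=\langle c^2,cw\rangle$. The second step is to test (i) and (ii) against each, using that $\det|_C$ is the field norm $x\mapsto x^{8}$, surjective onto $\F_7^\times$ with kernel the unique order-$8$ subgroup $\langle c^6\rangle$, and that $\det(c^kw)=-\det(c^k)$. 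A reflection $c^kw$ lies in $\mathrm{SL}_2(\F_7)$ precisely when $\det(c^k)=-1$, i.e. $k\equiv 3\pmod 6$ (in particular $k$ odd). Consequently: $C$ is cyclic and fails (i); $H_1$ is non-abelian (as $w$ acts nontrivially on $c^2$, since $(c^2)^7\neq c^2$ in $\langle c^2\rangle\cong\Z/24$), and its reflection coset consists of even powers $c^{2m}w$, none meeting $\mathrm{SL}_2$, so $H_1\cap\mathrm{SL}_2(\F_7)=\langle c^6\rangle\cong\Z/8$ and both (i), (ii) hold; finally $H_3$ is non-abelian but contains reflections $c^{2m+1}w$ with exponent $\equiv 3\pmod 6$, so $H_3\cap\mathrm{SL}_2(\F_7)$ has order $16$ and fails (ii). This proves $H_1$ is the unique index-$2$ subgroup with both properties.

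The last step is to identify $G(\mathrm{e}7)$ with $H_1$, using its explicit generators $A=\begin{pmatrix} 0 & 5\\ 3 & 0\end{pmatrix}$ and $B=\begin{pmatrix} 5 & 0\\ 3 & 2\end{pmatrix}$. A one-line multiplication shows $AB\neq BA$, so $G(\mathrm{e}7)$ is non-abelian and satisfies (i); and since $\det B=3$ is a primitive root modulo $7$, the determinant is surjective on $G(\mathrm{e}7)$, forcing $|G(\mathrm{e}7)\cap\mathrm{SL}_2(\F_7)|=48/6=8$. As $G(\mathrm{e}7)$ is one of the three index-$2$ subgroups, non-cyclicity excludes $C$ and the intersection order $8$ excludes $H_3$, leaving $G(\mathrm{e}7)=H_1$, which by the previous step is the unique subgroup satisfying (i) and (ii).

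I expect the only real subtlety (rather than a genuine obstacle) to be the bookkeeping separating the two non-cyclic candidates: both $H_1$ and $H_3$ contain $\langle c^6\rangle\cong\Z/8$, so what matters is solely whether their coset of reflections $c^kw$ meets $\mathrm{SL}_2(\F_7)$, governed by the parity of $k$ against the condition $k\equiv 3\pmod 6$. A symmetric caution applies to $C$ versus $H_1$, where the two intersections with $\mathrm{SL}_2(\F_7)$ coincide and only condition (i) distinguishes them — which confirms that both hypotheses of the lemma are genuinely needed. The potential worry that the $G(\mathrm{e}7)$ of \cite{freitas} lives in a differently-conjugated Cartan is dissolved at the outset by the conjugation-invariance of (i) and (ii), so no conjugating matrix need be exhibited.
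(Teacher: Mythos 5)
Your proof is correct, but it takes a genuinely different route from the paper's: the paper's proof of Lemma \ref{ge7def} is a two-line delegation to machine computation (``$\mathrm{GL}_2(\F_7)$ is a relatively small finite group, and we do these computations in \texttt{Magma}''), checking uniqueness in the model normaliser and verifying (i), (ii) directly on the generators in (\ref{ge7}), whereas you give a complete structural hand proof. You identify $G\simeq \F_{49}^{\times}\rtimes \mathrm{Gal}(\F_{49}/\F_7)$ with $wcw^{-1}=c^7$, compute $[G,G]=\langle c^6\rangle$ so that $G^{\mathrm{ab}}\cong \Z/6\times\Z/2$ and hence $G$ has exactly the three index-$2$ subgroups $C$, $H_1=\langle c^2,w\rangle$, $H_3=\langle c^2,cw\rangle$, and separate them via the facts that $\det|_C$ is the norm $x\mapsto x^8$ (kernel $\langle c^6\rangle$, the unique order-$8$ subgroup) and that $c^kw\in\mathrm{SL}_2(\F_7)$ exactly when $k\equiv 3\pmod 6$; I verified these computations, including $|H_3\cap\mathrm{SL}_2(\F_7)|=16$, the non-commuting products $AB\not\equiv BA\pmod 7$, and $\det(B)\equiv 3$ being a primitive root mod $7$, and they are all right. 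Your identification step legitimately quotes from \cite{freitas} (as the paper itself does, just before the lemma) only that $G(\mathrm{e}7)$ has order $48$ and index $2$ in a Cartan normaliser, and it is a nice logical economy that you never check cyclicity of $G(\mathrm{e}7)\cap\mathrm{SL}_2(\F_7)$ directly: non-abelianness excludes the conjugate of $C$ and the order-$8$ count excludes the conjugate of $H_3$, after which cyclicity is inherited from $H_1$. What your approach buys is independence from computer algebra, a proof that visibly generalises to other primes $p$ with the same semidirect-product mechanics, and the structural insight that hypotheses (i) and (ii) are independently necessary ($C$ and $H_1$ share the same $\mathrm{SL}_2$-intersection, while $H_1$ and $H_3$ are both non-cyclic); what the paper's approach buys is brevity and immunity to the modular-arithmetic bookkeeping (the congruence $k\equiv 3\pmod 6$ against parity) that is the one place your argument could realistically slip. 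The only implicit ingredient you might flag is that the normaliser is exactly $C\rtimes\langle w\rangle$ of order $96$, i.e.\ $2(p^2-1)$ for $p=7$, which is standard and consistent with the paper's definition of $G(\mathrm{ns}7^+)$.
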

\begin{proof}
The group $\mathrm{GL}_2(\F_7)$ is a relatively small finite group, and we do these computations in \texttt{Magma}. For uniqueness, we can use any normaliser of a non-split Cartan $G(\mathrm{ns}7^+)$, and we use the one defined above. To verify that $G(\mathrm{e}7)$ indeed satisfies these properties, we can work with the group defined in (\ref{ge7}).
\end{proof}
We also see from the definition of $G(\mathrm{ns}7^+)$ and Lemma \ref{ge7def} that $G(\mathrm{ns}7+)$ and $G(\mathrm{e}7)$ are normalised by $J$. This means that also $\phi_7$ is normalised by $J$. Since the only element of order 2 in $\mathrm{SL}_2(\F_7)$ is $-I$, we deduce from (ii) that $-I\in G(\mathrm{e}7)$. We also check that $\mathrm{det}(G(\mathrm{e}7))=\F_7^{\times}$.

Next, we use Lemma \ref{ge7def} to find generators, and we lift them to $\Gamma_0(5)$ to obtain the following five matrices.
Define
\[
g_0:=\begin{pmatrix} 61 & -55 \\ 10 & -9\end{pmatrix} \text{ and } \phi_7:=\begin{pmatrix} 3 & 1\\ -10&-3\end{pmatrix} \in \mathrm{SL}_2(\Z).
\]
Then $g_0$ reduces to the identity mod 5 and to a generator of  $G(\mathrm{e}7)\cap \mathrm{SL}_2(\F_7)$ mod 7, while $\phi_7$ is in $\Gamma_0(5)$ and reduces mod 7 to an element in $G(\mathrm{ns}7+)\cap \mathrm{SL}_2(\F_7)\setminus G(\mathrm{e}7)\cap \mathrm{SL}_2(\F_7)$. Next, define 
\[
B:=\begin{pmatrix} 6 & 5 \\ -5 & -4 \end{pmatrix},\; C:=B\cdot \begin{pmatrix} 4 & 0 \\ 0 & 1\end{pmatrix}\text{ and }w_5:=\begin{pmatrix} 2890 & 193 \\
 -8685 & -580 \end{pmatrix} \in \mathrm{GL}_2^+(\Q).
\]
We note that $C$ reduces into $B_0(5)$ mod 5, $\mathrm{det}(w_5)=5$ and $w_5$ reduces into $G(\mathrm{e}7)$ mod 7. So $w_5$ indeed corresponds to the Atkin--Lehner involution.  Moreover, 
\[
G(\mathrm{e}7)=\langle \overline{g}_0,\overline{C},\overline{J}\rangle \text{ and } G(\mathrm{ns}7+)=\langle \overline{g}_0,\overline{C},\overline{J},\overline{\phi}_7\rangle,
\]
where a bar denotes reduction mod 7. 

Using the built-in commands in \texttt{Sage}, we compute $S:=S_2(\Gamma_0(5\cdot 7^2)\cap \Gamma_1(7),\Q(\zeta_7))$. This space has dimension 61. We need to determine the fixed spaces
\[
S^{\langle g_0,\phi_7,C,J\rangle},\; S^{\langle g_0,w_5,C,J\rangle} \text{ and } S^{\langle g_0,w_5\cdot \phi_7,C,J\rangle}. 
\]
To this end, we first compute the fixed spaces $S^{\langle g_0,\phi_7\rangle},S^{\langle g_0,w_5\rangle}$ and $S^{\langle g_0,w_5\cdot \phi_7\rangle}$, after which we determine the $\Q$-rational structure by considering $C$ and $J$. 

\subsection{The twist orbit spaces}
We execute Step (4) of Algorithm \ref{algorithm1}, by computing $q$-expansions for the eigenforms corresponding to the bases of the real twist orbit spaces as described in Proposition \ref{basisprop}. There are two non-zero spaces of cusp forms of lower level: $S_2(\Gamma_0(49)\cap \Gamma_1(7),\Q(\zeta_7))$ (3-dimensional), and $S_2(\Gamma_0(5)\cap \Gamma_1(7),\Q(\zeta_7))$ (7-dimensional).

Define 
\[
\chi: (\Z/7\Z)^{\times}\to \CC^{\times},\;\; \chi(3)=e^{\pi i/3}.
\]
Then $\chi$ has order 6 and $\mathcal{D}_{7}=\langle \chi \rangle$. Also, $\chi(-1)=-1$, so the even characters are generated by $\chi^2$. We note all found equations have degree 2, so it suffices to take $\texttt{prec}=2(2\cdot 8-2)=28$. To save space, we here display modular forms only up to $q^{10}$. 

We consider the following newforms, all of which have trivial Nebentypus character:
\begin{align*}
   f_{49}&:=q+q^2-q^4-3q^8-3q^9+O(q^{11}) \in S_2(\Gamma_0(49)\cap \Gamma_1(7),\overline{\Q}), \\
   f_{35}&:=q+q^3-2q^4-q^5+q^6-2q^8-3q^{10}+O(q^{11})\in S_2(\Gamma_0(5)\cap \Gamma_1(7),\overline{\Q}),\\
   g_{35}&:=q+ \al q^2 -(\al+1)q^3 +(2-\al)q^4+ q^5 -4q^6, -q^7+ (\al - 4)q^8+(\al + 2)q^9\\&\;\;\;\;\;\;\;\;\;\;+ \al q^{10}+O(q^{11}) \in S_2(\Gamma_0(5)\cap \Gamma_1(7),\overline{\Q}), \text{ where }\al=(-1+\sqrt{17})/2,\\
   f_0&:=q -2q^2 -3q^3+ 2q^4 +q^5+ 6q^6 +6q^9 -2q^{10}+O(q^{11})\in S_2(\Gamma_0(5\cdot 7^2)\cap \Gamma_1(7),\overline{\Q}),\\
   f_1&:=q+\sqrt{2}q^2-(\sqrt{2}+1)q^3-q^5\\&\;\;\;\;\;\;\;\;\;\;-(\sqrt{2}+2)q^6-2\sqrt{2}q^8+2\sqrt{2}q^9-\sqrt{2}q^{10}+O(q^{11}) \in S_2(\Gamma_0(5\cdot 7^2)\cap \Gamma_1(7),\overline{\Q}),\text{ and}\\
   f_2&:=q+ (1+\sqrt{2})q^2+ (1-\sqrt{2})q^3+(2\sqrt{2}+1)q^4 +q^5-q^6+ (\sqrt{2}+3)q^8-2\sqrt{2}q^9\\&\;\;\;\;\;\;\;\;\;\;+(1+\sqrt{2})q^{10}+O(q^{11})\in S_2(\Gamma_0(5\cdot 7^2)\cap \Gamma_1(7),\overline{\Q}).
\end{align*}
When $f$ is a cusp form defined over a quadratic field, let $f^c$ be its Galois conjugate cusp form. For brevity, we write $\mathrm{pr}:=\mathrm{pr}_{245/35}$, and we omit any application of the $B_1$-operator, since it acts as the identity on $q$-expansions. This leads to the following real twist orbit spaces of modular forms:
\begin{align*}
    O_1^+&:=\mathrm{Span}\{f_{49},f_{49}\otimes \chi^2,f_{49}\otimes \chi^4,f_{49}|B_5,(f_{49}\otimes \chi^2)|B_5,(f_{49}\otimes \chi^4)|B_5\},\\
    O_2^+&:=\mathrm{Span}\{f_{35},f_{35}\otimes \chi^2,f_{35}\otimes \chi^4,f_{35}|B_7\},\\ O_3^+&:=\mathrm{Span}\{f_{35}\otimes \chi,f_{35}\otimes \chi^3,f_{35}\otimes \chi^5\},\\
    O_4^+&:=\mathrm{Span}\{g_{35},g_{35}\otimes \chi^2,g_{35}\otimes \chi^4,g_{35}|B_7\} \text{ and } O_4^{c,+}:=\mathrm{Span}\{g_{35}^c,g_{35}^c\otimes \chi^2,g_{35}^c\otimes \chi^4,g_{35}^c|B_7\},\\
    O_5^+&:=\mathrm{Span}\{g_{35}\otimes \chi,g_{35}\otimes \chi^3,g_{35}\otimes \chi^5\} \text{ and } O_5^{c,+}:=\mathrm{Span}\{g_{35}^c\otimes \chi,g_{35}^c\otimes \chi^3,g_{35}^c\otimes \chi^5\},\\
    O_6^+&:=\mathrm{Span}\{f_0,f_0\otimes \chi^2,f_0\otimes \chi^4\},\\
    O_7^+&:=\mathrm{Span}\{f_0\otimes\chi,f_0\otimes\chi^3,f_0\otimes\chi^5\},\\
    O_8^+&:=\mathrm{Span}\{f_1,f_1\otimes\chi^2,f_1\otimes\chi^4\} \text{ and }O_8^{c,+}:=\mathrm{Span}\{f_1^c,f_1^c\otimes\chi^2,f_1^c\otimes\chi^4\},\\
    O_9^+&:=\mathrm{Span}\{f_1\otimes \chi,f_1\otimes\chi^3,f_1\otimes\chi^5\} \text{ and } O_9^{c,+}:=\mathrm{Span}\{f_1^c\otimes \chi,f_1^c\otimes\chi^3,f_1^c\otimes\chi^5\},\\
    O_{10}^+&:=\mathrm{Span}\{f_2,(f_2\otimes \chi^2)|\mathrm{pr},(f_2\otimes \chi^4)|\mathrm{pr},(f_2\otimes \chi^2)|\mathrm{pr}|B_7,(f_2\otimes \chi^4)|\mathrm{pr}|B_7\} \text{ and }\\ 
    O_{10}^{c,+}&:=\mathrm{Span}\{f_2^c,(f_2^c\otimes \chi^2)|\mathrm{pr},(f_2^c\otimes \chi^4)|\mathrm{pr},(f_2^c\otimes \chi^2)|\mathrm{pr}|B_7,(f_2^c\otimes \chi^4)|\mathrm{pr}|B_7\},\\
    O_{11}^+&:=\mathrm{Span}\{f_2\otimes \chi,f_2\otimes \chi^3,f_2\otimes \chi^5\}\text { and }O_{11}^{c,+}:=\mathrm{Span}\{f_2^c\otimes \chi,f_2^c\otimes \chi^3,f_2^c\otimes \chi^5\}.
\end{align*}
By counting dimensions, we see that these are all of the real twist orbit spaces. These computations were done in the ``space of cusp forms'' in \texttt{Sage}, but note that the underlying calculations do happen in the space of modular symbols. 

For each $i\in \{1,\ldots,11\}$, denote by $V_i^+$ the sum of $O_i^+$ and, if it exists, $O_i^{c,+}$.

\subsection{Computations with modular symbols}
We ask \texttt{Sage} to compute the 122-dimensional space $H_1(X_{\Gamma_0(5\cdot 7^2)\cap \Gamma_1(7)},\Q)$. Given a basis for this space, we ask for the matrix of the operator induced by $J$, and we  determine the $+1$-eigenspace. Consider an eigenform $f\in S_2(\Gamma_0(5\cdot 7^2)\cap \Gamma_1(7),\overline{\Q})$. As $T_p$ commutes with $J$, we can use Lemma \ref{transformationlemma} to determine the modular symbol $\gamma\in H_1(\Gamma_0(5\cdot 7^2)\cap \Gamma_1(7),\overline{\Q})^+$ such that $\gamma=\gamma_f$ up to a constant multiple. In practice, it sufficed for us to compute eigenvectors in $H_1(\Gamma_0(5\cdot 7^2)\cap \Gamma_1(7),\overline{\Q})^+$ for the $T_2$- and $T_3$-operators only: when the intersection of the kernels of $T_2-\overline{a}_2(f)$ and $T_3-\overline{a}_3(f)$ on $H_1(\Gamma_0(5\cdot 7^2)\cap \Gamma_1(7),\overline{\Q})^+$ is 1-dimensional, its elements must be multiples of $\gamma_{f}$.  We thus obtain explicit eigensymbols
\[
\mu_0:=\al_0\gamma_{f_0},\; \mu_1:=\al_1\gamma_{f_1},\; \mu_2:=\al_2\gamma_{f_2},\; \mu_{35}:=\al_{35}\gamma_{f_{35}},\; \mu_{49}:=\al_{49}\gamma_{f_{49}},\; \nu_{35}:=\beta_{35}\gamma_{g_{35}},
\]
where $\beta_{35}$ and all $\al_i$ are non-zero but unknown. However, within twist orbit spaces these constants are also irrelevant.

Moreover, we recall that $R_{\chi^2}(\gamma_f)=g(\overline{\chi}^2)\gamma_{f\otimes \overline{\chi}^2}$ and $[\Gamma_0(N):\Gamma_0(M)]\gamma_{f|B_d}=\mathrm{Tr}_d^{N/L}(\gamma_f)$, where the action of $R_{\chi^2}$, $\mathrm{Tr}_7$ and $\mathrm{Tr}_5$, as well as that of $\pi_{245/35}$ and $\pi_{235/49}$, on $H_1(\Gamma_0(5\cdot 7^2)\cap \Gamma_1(7),\overline{\Q})$ can be determined explicitly. To compute the coset representatives needed for the $\mathrm{Tr}_d^{N/L}$ operators, we used an algorithm of Stein \cite[Algorithm 2.20]{stein}. We thus explicitly compute the real twist orbit spaces on the modular symbols side. 

The next step is to conjugate $g_0$, $\phi_7$ and $w_5$ by $\gamma_7$ and compute their action on $H_1(\Gamma_0(5\cdot 7^2)\cap \Gamma_1(7),\overline{\Q})$. This can be done in \texttt{Sage} via built-in functions. We intersect the different fixed-spaces with the real twist orbit spaces to obtain the desired basis of fixed modular symbols. For brevity reasons, we do not display all of these here, but we give two examples: we find that
\[
4\left(\mu_{35}-\mathrm{Tr}_7(\mu_{35})\right)+\left(\frac{6\zeta_3}{7}+\frac27\right)R_{\overline{\chi}^2}(\mu_{35})+\left(\frac{36}{343}\zeta_3-\frac{2}{343}\right)R_{\overline{\chi}^4}(\mu_{35})\]
is in $(H_{\CC}(\Gamma_0(5\cdot 7^2)\cap \Gamma_1(7))^+)^{g_0,w_5\cdot \phi_7}$ and that
\begin{align*}
\left(\frac{2}{21}\sqrt{2} + \frac{8}{21}\right)\mu_2 &+ \left(\left(\frac{10}{1029} \zeta_3 - \frac{2}{343}\right)\sqrt{2} + \frac{40}{1029}\cdot\zeta_3 -
\frac{8}{343}\right)R_{\overline{\chi}^2}(\mu_2)\\&+ \left(\left(\frac{16}{7203} \zeta_3 + \frac{10}{7203}\right)\sqrt{2} + \frac{64}{7203} \zeta_3 + \frac{40}{7203}\right)R_{\overline{\chi}^4}(\mu_2)\\&+
\left(\left(-\frac{2}{147} \zeta_3 - \frac{4}{147}\right) \sqrt{2} - \frac{20}{1029} \zeta_3 - \frac{16}{1029}\right)\frac{B_7\mathrm{pr}R_{\overline{\chi}^4}(\mu_2)}{7}\\&+ \left(\left(\frac{46}{1029} \zeta_3 +
\frac{188}{1029}\right)\sqrt{2} - \frac{68}{1029} \zeta_3 + \frac{80}{1029}\right)\frac{B_7\mathrm{pr}R_{\overline{\chi}^2}(\mu_2)}{ 7}
\end{align*}
is in $H_{\CC}(\Gamma_0(5\cdot 7^2)\cap \Gamma_1(7))^{g_0,\phi_7}$. Finally, we determine the $\CC$-linear action of $B$ on these bases of fixed modular symbols, one real twist orbit at a time. We also find a matrix for the $\Q$-linear action of $\sigma_4: \; \zeta_7\mapsto \zeta_7^4$ and $J$ on the corresponding cusp forms. We choose to translate the actions of $\sigma_4$ and $J$ to a $\Q$-linear action on modular symbols (rather than translating the action of $B$ to modular forms), and compose $\sigma_4$ and $J$ with $B$. This yields two $\Q$-linear maps on each fixed subspace of a real twist orbit space, and we determine the $\Q$-linear subspace fixed by these two maps. Translating this back to modular forms, we obtain the desired cusp forms.

\subsection{The cusp forms}
 Let $\beta_7=\zeta_7+\zeta_7^{-1}$. On each of the curves, denote by $q=e^{2\pi i\tau/7}$ a uniformiser at (the image of) $\infty$. The space $H^0(X(\mathrm{b}5,\mathrm{ns}7)/w_5,\Omega^1)$ has a basis of 1-forms with $q$-expansions $h_1(q)\frac{\mathrm{d}q}{q}$ and  $h_2(q)\frac{\mathrm{d}q}{q}$, where 
 \begin{align*}
     h_1:=&\left(10\beta_7^2 + 2\beta_7 - 16\right)q + \left(4\beta_7^2 + 12\beta_7 - 12\right)q^2 + \left(-6\beta_7^2 - 4\beta_7 + 18\right)q^3 + \left(8\beta_7^2 + 10\beta_7 - 10\right)q^5 \\&\;\;\;\;\;+ \left(-8\beta_7^2 + 4\beta_7 + 24\right)q^6 + \left(24\beta_7^2 + 16\beta_7 - 16\right)q^8 + \left(8\beta_7^2 + 24\beta_7 - 24\right)q^9 + O\left(q^{10}\right)\in V_8^+,\\h_2:=&
 \left(-6\beta_7^2 - 4\beta_7 + 4\right)q + \left(-8\beta_7^2 - 10\beta_7 + 10\right)q^2 + \left(-2\beta_7^2 - 6\beta_7 + 6\right)q^3 + \left(-2\beta_7^2 - 6\beta_7 + 6\right)q^5\\&\;\;\;\;\; + \left(2\beta_7^2 + 6\beta_7 + 8\right)q^6 + \left(-20\beta_7^2 - 4\beta_7 + 32\right)q^8 + \left(-16\beta_7^2 - 20\beta_7 + 20\right)q^9 + O\left(q^{10}\right)\in V_8^+.
 \end{align*}The space $H^0(X(\mathrm{b}5,\mathrm{ns}7),\Omega^1)$ has a basis of 1-forms $f(q)\frac{\mathrm{d}q}{q}$, where $f(q)$ is in the following set: 
\begin{align*}
&h_1,\; h_2,\in V_8^+,\\&
 \left(-2\beta_7^2 - 6\beta_7 + 6\right)q + \left(-16\beta_7^2 + 8\beta_7 + 48\right)q^2 + \left(-12\beta_7^2 - 8\beta_7 + 8\right)q^3 + \left(42\beta_7^2 + 28\beta_7 - 28\right)q^4\\&\;\;\;\;\; + \left(-4\beta_7^2 + 2\beta_7 + 12\right)q^5 + \left(2\beta_7^2 + 6\beta_7 - 6\right)q^6 + \left(-28\beta_7^2 + 28\beta_7 + 56\right)q^7\\&\;\;\;\;\; + \left(-12\beta_7^2 - 36\beta_7 + 36\right)q^8 + \left(24\beta_7^2 - 12\beta_7 - 72\right)q^9 + O\left(q^{10}\right)\in V_{10}^+,\\&
 \left(-3\beta_7^2 - 9\beta_7 + 9\right)q + \left(-10\beta_7^2 + 5\beta_7 + 30\right)q^2 + \left(3\beta_7^2 + 2\beta_7 - 2\right)q^3 + \left(21\beta_7^2 + 14\beta_7 - 14\right)q^4\\&\;\;\;\;\; + \left(-6\beta_7^2 + 3\beta_7 + 18\right)q^5 + \left(3\beta_7^2 + 9\beta_7 - 9\right)q^6 + \left(21\beta_7 + 7\right)q^7\\&\;\;\;\;\; + \left(-11\beta_7^2 - 33\beta_7 + 33\right)q^8 + \left(8\beta_7^2 - 4\beta_7 - 24\right)q^9 + O\left(q^{10}\right)\in V_{10}^+,\\&
 \left(10\beta_7^2 + 2\beta_7 - 16\right)q + \left(-4\beta_7^2 - 12\beta_7 + 12\right)q^2 + \left(-10\beta_7^2 + 12\beta_7 + 30\right)q^3 + \left(-8\beta_7^2 - 10\beta_7 + 10\right)q^5\\&\;\;\;\;\; + \left(32\beta_7^2 + 12\beta_7 - 40\right)q^6 + \left(-24\beta_7^2 - 16\beta_7 + 16\right)q^8 + \left(-8\beta_7^2 - 24\beta_7 + 24\right)q^9 + O\left(q^{10}\right)\in V_{9}^+,\\&
 \left(6\beta_7^2 + 4\beta_7 - 4\right)q + \left(-8\beta_7^2 - 10\beta_7 + 10\right)q^2 + \left(-6\beta_7^2 + 10\beta_7 + 18\right)q^3 + \left(-2\beta_7^2 - 6\beta_7 + 6\right)q^5\\&\;\;\;\;\; + \left(22\beta_7^2 + 10\beta_7 - 24\right)q^6 + \left(-20\beta_7^2 - 4\beta_7 + 32\right)q^8 + \left(-16\beta_7^2 - 20\beta_7 + 20\right)q^9 + O\left(q^{10}\right)\in V_{9}^+.
\end{align*}
The space $H^0(X(\mathrm{b}5,\mathrm{e}7)/w_5\phi_7),\Omega^1)$ has a basis of 1-forms $f(q)\frac{\mathrm{d}q}{q}$, where $f(q)$ is one of:
\begin{align*}
&\left(\beta_7^2 - \beta_7 - 2\right)q + \left(-2\beta_7^2 - \beta_7 + 3\right)q^2 + \left(-\beta_7^2 - 2\beta_7 + 1\right)q^4 + \left(10\beta_7^2 + 5\beta_7 - 15\right)q^5\\&\;\;\;\;\; + \left(-3\beta_7^2 + 3\beta_7 + 6\right)q^8 + \left(6\beta_7^2 + 3\beta_7 - 9\right)q^9 + O\left(q^{10}\right)\in V_1^+,\\&
 \left(-14\beta_7^2 - 14\beta_7 + 28\right)q + \left(-7\beta_7 - 7\right)q^2 + \left(-7\beta_7^2 + 7\right)q^3\\&\;\;\;\;\; + \left(35\beta_7^2 - 35\right)q^4 + \left(14\beta_7 + 14\right)q^5 + \left(56\beta_7^2 + 56\beta_7 - 112\right)q^6\\&\;\;\;\;\; + 56q^7 + \left(63\beta_7^2 + 63\beta_7 - 126\right)q^8 + \left(21\beta_7 + 21\right)q^9 + O\left(q^{10}\right)\in V_4^+,\\&
 \left(7\beta_7 + 7\right)q^2 + \left(-7\beta_7^2 + 7\right)q^3 + \left(-7\beta_7^2 + 7\right)q^4\\&\;\;\;\;\; + \left(-7\beta_7^2 - 7\beta_7 + 14\right)q^8 + \left(7\beta_7 + 7\right)q^9 + O\left(q^{10}\right)\in V_4^+,\\&
 \left(-\beta_7^2 + \beta_7 + 2\right)q + \left(-4\beta_7^2 - 2\beta_7 + 6\right)q^2 + \left(3\beta_7^2 + 6\beta_7 - 3\right)q^3\\&\;\;\;\;\; + \left(-2\beta_7^2 - 4\beta_7 + 2\right)q^4 + \left(2\beta_7^2 + \beta_7 - 3\right)q^5 + \left(-6\beta_7^2 + 6\beta_7 + 12\right)q^6\\&\;\;\;\;\; + \left(12\beta_7^2 + 6\beta_7 - 18\right)q^9 + O\left(q^{10}\right)\in V_6^+,\\&h_1,\; h_2\in V_8^+,\\&
 \left(2\beta_7^2 + 2\beta_7 + 2\right)q + \left(-4\beta_7 + 8\right)q^2 + \left(-6\beta_7^2 + 24\right)q^4\\&\;\;\;\;\; + \left(-2\beta_7 + 4\right)q^5 + \left(-2\beta_7^2 - 2\beta_7 - 2\right)q^6 + \left(12\beta_7^2 - 20\right)q^7\\&\;\;\;\;\; + \left(8\beta_7^2 + 8\beta_7 + 8\right)q^8 + \left(4\beta_7 - 8\right)q^9 + O\left(q^{10}\right)\in V_{10}^+,\\&
 \left(\beta_7^2 + \beta_7 + 1\right)q + \left(-3\beta_7 + 6\right)q^2 + \left(\beta_7^2 - 4\right)q^3\\&\;\;\;\;\; + \left(-5\beta_7^2 + 20\right)q^4 + \left(-\beta_7 + 2\right)q^5 + \left(-\beta_7^2 - \beta_7 - 1\right)q^6\\&\;\;\;\;\; + \left(12\beta_7^2 + 3\beta_7 - 19\right)q^7 + \left(5\beta_7^2 + 5\beta_7 + 5\right)q^8 + \left(4\beta_7 - 8\right)q^9 + O\left(q^{10}\right)\in V_{10}^+.
\end{align*}

Finally, the space $H^0(X(\mathrm{b}5,\mathrm{e}7)/w_5),\Omega^1)$ has a basis of 1-forms $f(q)\frac{\mathrm{d}q}{q}$, where $f(q)$ is in the set

\begin{align*}
&\left(-\beta_7^2 + \beta_7 + 2\right)q + \left(2\beta_7^2 + \beta_7 - 3\right)q^2 + \left(\beta_7^2 + 2\beta_7 - 1\right)q^4 + \left(10\beta_7^2 + 5\beta_7 - 15\right)q^5\\&\;\;\;\;\; + \left(3\beta_7^2 - 3\beta_7 - 6\right)q^8 + \left(-6\beta_7^2 - 3\beta_7 + 9\right)q^9 + O\left(q^{10}\right)\in V_1^+,\\&
 \left(-7\beta_7^2 - 7\beta_7 + 14\right)q + \left(7\beta_7^2 - 7\right)q^3 + \left(-14\beta_7^2 + 14\right)q^4\\&\;\;\;\;\; + \left(-7\beta_7 - 7\right)q^5 - 28q^7 + \left(-14\beta_7 - 14\right)q^9 + O\left(q^{10}\right)\in V_2^+,\\& h_1,\; h_2\in V_8^+,\\&
 \left(-\beta_7^2 + \beta_7 + 2\right)q + \left(-4\beta_7^2 - 2\beta_7 + 6\right)q^2 + \left(-3\beta_7^2 - 6\beta_7 + 3\right)q^3\\&\;\;\;\;\; + \left(-2\beta_7^2 - 4\beta_7 + 2\right)q^4 + \left(-2\beta_7^2 - \beta_7 + 3\right)q^5 + \left(6\beta_7^2 - 6\beta_7 - 12\right)q^6\\&\;\;\;\;\; + \left(12\beta_7^2 + 6\beta_7 - 18\right)q^9 + O\left(q^{10}\right)\in V_7^+.
\end{align*}
For each of the final three curves, these cusp forms satisfy the equations as displayed in Theorem \ref{modelthm}, where the variable $X_i$ corresponds to the $(i+1)$th displayed cusp form. 

\subsection{The map $X(\mathrm{b}5,\mathrm{ns}7)\to X(\mathrm{ns}7)$}
To determine $X(\mathrm{b}5,\mathrm{ns}7)\to X(\mathrm{ns}7)$, we first use Chen's $j$-map (\ref{chenj}) to find the $q$-expansion of a Hauptmodul $n_7$ on $X(\mathrm{ns}7)$, by solving the equation
\[
j(q^7)=j_{\mathrm{ns}7}(n_7(q)).
\]
(Note the 7th power of $q$ because we defined $q$ as $e^{2\pi i \tau/7}$.) This yields
\begin{align*}
\eta_7(q)&=-\beta_7^2 - \beta_7 + 1+
    (-4\beta_7^2 - \beta_7 + 11)q+
    (-18\beta_7^2 - 11\beta_7 + 38)q^2\\&+
    (-53\beta_7^2 - 26\beta_7 + 124)q^3+
    (-171\beta_7^2 - 102\beta_7 + 370)q^4+O(q^5).
\end{align*}. Denote by $f_0,\ldots,f_5$ the computed basis for $H^0(X(\mathrm{b}5,\mathrm{ns}7),\Omega^1)$, in the same order as displayed above. It is a linear algebra computation to find homogeneous polynomials $p,r\in \Q[X_0,\ldots,X_5]$ such that 
\[
r(f_0(q),\ldots,f_5(q))n_7(q)=p(f_0(q),\ldots,f_5(q),
\]
at least up to $O(q^{200})$. 
The map $X(\mathrm{b}5,\mathrm{ns}7)\to X(\mathrm{ns}7)$ is then given by $\pi:(X_0:\ldots:X_5)\mapsto (p(X_0,\ldots,X_5):r(X_0,\ldots,X_5))$.

We verify correctness as follows. We compute that $\pi$ has degree 6. The correct map $\pi'$ to $X(\mathrm{ns}7)$ also has degree 6. Viewing $\pi,\pi'$ as elements of the function field of $X(\mathrm{b}5,\mathrm{ns}7)$, their difference $\pi-\pi'$ thus has polar degree at most 12. Hence a precision of 13 suffices. 
\bibliographystyle{amsplain}
\bibliography{refs}

\providecommand{\bysame}{\leavevmode\hbox to3em{\hrulefill}\thinspace}
\providecommand{\MR}{\relax\ifhmode\unskip\space\fi MR }
\providecommand{\MRhref}[2]{%
  \href{http://www.ams.org/mathscinet-getitem?mr=#1}{#2}
}
\providecommand{\href}[2]{#2}
\begin{thebibliography}{10}

\bibitem{assaf}
Eran Assaf, \emph{Computing classical modular forms for arbitrary congruence
  subgroups}, arXiv:2002.07212v1 (2020).

\bibitem{atkinlehner}
A.~O.~L. Atkin and J.~Lehner, \emph{Hecke operators on {$\Gamma _{0}(m)$}},
  Math. Ann. \textbf{185} (1970), 134--160. \MR{268123}

\bibitem{atkinli}
A.~O.~L. Atkin and Wen Ch'ing~Winnie Li, \emph{Twists of newforms and
  pseudo-eigenvalues of {$W$}-operators}, Invent. Math. \textbf{48} (1978),
  no.~3, 221--243. \MR{508986}

\bibitem{annotatedsagecode}
Barinder~S. Banwait and John~E. Cremona, \emph{Computing the modular curves
  ${X}_s(13)$, ${X}_{ns}(13)$ and ${X}_{A_4}(13)$ using modular symbols in
  {S}age (annotated {S}age code)}, ancillary file to arXiv:1306.6818, see also
  \href{https://github.com/JohnCremona/X13/blob/main/X13.ipynb}{https://github.com/JohnCremona/X13/blob/main/X13.ipynb}.

\bibitem{banwait}
\bysame, \emph{Tetrahedral elliptic curves and the local-global principle for
  isogenies}, Algebra Number Theory \textbf{8} (2014), no.~5, 1201--1229.
  \MR{3263141}

\bibitem{baran}
Burcu Baran, \emph{Normalizers of non-split {C}artan subgroups, modular curves,
  and the class number one problem}, J. Number Theory \textbf{130} (2010),
  no.~12, 2753--2772. \MR{2684496}

\bibitem{baran2}
\bysame, \emph{An exceptional isomorphism between modular curves of level 13},
  J. Number Theory \textbf{145} (2014), 273--300. \MR{3253304}

\bibitem{box2}
Josha Box, \emph{Elliptic curves over quartic fields not containing $\sqrt{5}$
  are modular}, in preparation.

\bibitem{bruin}
Peter Bruin and Filip Najman, \emph{Hyperelliptic modular curves {$X_0(n)$} and
  isogenies of elliptic curves over quadratic fields}, LMS J. Comput. Math.
  \textbf{18} (2015), no.~1, 578--602. \MR{3389884}

\bibitem{brunault}
Fran\c{c}ois Brunault and Michael Neururer, \emph{Fourier expansions at cusps},
  The Ramanujan Journal (2019).

\bibitem{chen}
Imin Chen, \emph{The jacobian of modular curves associated to cartan
  subgroups}, 1996, Thesis (Ph.D.)--Oxford University.

\bibitem{conrad}
Brian Conrad, Fred Diamond, and Richard Taylor, \emph{Modularity of certain
  potentially {B}arsotti-{T}ate {G}alois representations}, J. Amer. Math. Soc.
  \textbf{12} (1999), no.~2, 521--567. \MR{1639612}

\bibitem{conway}
J.~H. Conway and S.~P. Norton, \emph{Monstrous moonshine}, Bull. London Math.
  Soc. \textbf{11} (1979), no.~3, 308--339. \MR{554399}

\bibitem{cremona}
J.~E. Cremona, \emph{Algorithms for modular elliptic curves}, second ed.,
  Cambridge University Press, Cambridge, 1997. \MR{1628193}

\bibitem{deligne}
P.~Deligne and M.~Rapoport, \emph{Les sch\'{e}mas de modules de courbes
  elliptiques},  (1973), 143--316. Lecture Notes in Math., Vol. 349.
  \MR{0337993}

\bibitem{derickx}
Maarten Derickx, Filip Najman, and Samir Siksek, \emph{Elliptic curves over
  totally real cubic fields are modular}, Algebra Number Theory \textbf{14}
  (2020), no.~7, 1791--1800. \MR{4150250}

\bibitem{diamondshurman}
Fred Diamond and Jerry Shurman, \emph{A first course in modular forms},
  Graduate Texts in Mathematics, vol. 228, Springer-Verlag, New York, 2005.
  \MR{2112196}

\bibitem{freitas}
Nuno Freitas, Bao~V. Le~Hung, and Samir Siksek, \emph{Elliptic curves over real
  quadratic fields are modular}, Invent. Math. \textbf{201} (2015), no.~1,
  159--206. \MR{3359051}

\bibitem{galbraith}
Steven~D. Galbraith, \emph{Rational points on {$X^+_0(N)$} and quadratic {$\Bbb
  Q$}-curves}, J. Th\'{e}or. Nombres Bordeaux \textbf{14} (2002), no.~1,
  205--219. \MR{1925998}

\bibitem{katzmazur}
Nicholas~M. Katz and Barry Mazur, \emph{Arithmetic moduli of elliptic curves},
  Annals of Mathematics Studies, vol. 108, Princeton University Press,
  Princeton, NJ, 1985. \MR{772569}

\bibitem{lehung}
Bao~Viet Le~Hung, \emph{Modularity of some elliptic curves over totally real
  fields}, ProQuest LLC, Ann Arbor, MI, 2014, Thesis (Ph.D.)--Harvard
  University. \MR{3251352}

\bibitem{serre}
Jean-Pierre Serre, \emph{Propri\'{e}t\'{e}s galoisiennes des points d'ordre
  fini des courbes elliptiques}, Invent. Math. \textbf{15} (1972), no.~4,
  259--331. \MR{387283}

\bibitem{shimura}
Goro Shimura, \emph{Introduction to the arithmetic theory of automorphic
  functions}, Publications of the Mathematical Society of Japan, vol.~11,
  Princeton University Press, Princeton, NJ, 1994, Reprint of the 1971
  original, Kan\^{o} Memorial Lectures, 1. \MR{1291394}

\bibitem{stein}
William~Arthur Stein, \emph{Explicit approaches to modular abelian varieties},
  ProQuest LLC, Ann Arbor, MI, 2000, Thesis (Ph.D.)--University of California,
  Berkeley. \MR{2701042}

\bibitem{petri}
Karl-Otto St\"{o}hr and Paulo Viana, \emph{A variant of {P}etri's analysis of
  the canonical ideal of an algebraic curve}, Manuscripta Math. \textbf{61}
  (1988), no.~2, 223--248. \MR{943539}

\bibitem{tingley}
D.J. Tingley, \emph{Elliptic curves uniformized by modular functions}, DPhil
  thesis, University of Oxford, 1975.

\bibitem{zywina}
David Zywina, \emph{Computing actions on cusp forms}, arXiv:2001.07270.

\end{thebibliography}

\centering 
\vspace{.3cm}
\textsc{Author e-mail address: joshabox@msn.com}
\end{document}